\documentclass{amsart}
\usepackage{amsfonts}
\usepackage{bbm}
\usepackage{mathrsfs}
\usepackage{color}
\usepackage{amsmath}
\usepackage{bigdelim}
\usepackage{multirow}
\usepackage{amssymb}
\usepackage{indentfirst}
\usepackage{CJK}
\usepackage{fancyhdr}
\usepackage{amscd,amssymb,amsmath,graphicx,verbatim,xy}
\usepackage[TS1,OT1,T1]{fontenc}

\newtheorem{theorem}{Theorem}[section]
\newtheorem{lemma}[theorem]{Lemma}
\newtheorem{corollary}[theorem]{Corollary}
\newtheorem{proposition}[theorem]{Proposition}

\theoremstyle{definition}
\newtheorem{definition}[theorem]{Definition}
\newtheorem{example}[theorem]{Example}

\theoremstyle{remark}
\newtheorem{remark}[theorem]{Remark}
\numberwithin{equation}{section}

\begin{document}
	
\title[$\boldsymbol{i}$-conjugate for quaternionic matrices and related properties]{$\boldsymbol{i}$-conjugate for quaternionic matrices and related properties}

\author{Cailing Yao}
\address{Cailing Yao, School of Medical Informatics, Changchun University of Chinese Medicine, 130117, Changchun, P. R. China}
\email{yaocl@ccucm.edu.cn}
\author{Bingzhe Hou$^{\ast}$}
\address{Bingzhe Hou, School of Mathematics, Jilin University, 130012, Changchun, P. R. China}
\email{houbz@jlu.edu.cn}
\author{Xiaoqi Feng}
\address{Xiaoqi Feng, School of Mathematics, Jilin University, 130012, Changchun, P. R. China}
\email{fengxiaoqi2011@qq.com}

\subjclass{15B33, 15B57, 15A21.}

\keywords{unitary equivalence, $\boldsymbol{i}$-conjugate, quaternionic matrices,
	 $\boldsymbol{i}$-similarity, polar decomposition}

\begin{abstract}
Motivated by the result that a complex $n\times n$ matrix $A$ 
unitarily equivalent to a real matrix, we extend the conclusion to the quaternion skew field. In this paper, we present a necessary and sufficient condition for a quaternion $n\times n$ matrix $A$ to be unitarily equivalent to a complex matrix. To state the truth more clearly, we put forward the concept which we call $\boldsymbol{i}$-conjugate. Furthermore, we study the concepts related to $\boldsymbol{i}$-conjugate and their properties, such as unitary $\boldsymbol{i}$-congruence, $\boldsymbol{i}$-conjugate normality and $\boldsymbol{i}$-Hermicity in $M_{n}(\mathbb{H})$ as generalizations of the conventional unitary congruence, conjugate normality and Hermicity of matrices in $M_{n}(\mathbb{C})$. Finally, we present a new type of polar decomposition of quaternion matrices.
\end{abstract}
\maketitle

\section{Introduction and preliminaries}
In this paper, we mainly concentrate on the $n\times n$ quaternion matrices. The skew field $\mathbb{H}$ of quaternions was discovered by W. Hamilton in 1843. Each $a\in\mathbb{H}$ has the form $a=a_0+a_1\boldsymbol{i}+a_2\boldsymbol{j}+a_3\boldsymbol{k}$, where $a_0, a_1, a_2, a_3\in\mathbb{R}$ and $\boldsymbol{i}^2=\boldsymbol{j}^2=\boldsymbol{k}^2=\boldsymbol{ijk}=-1$. Denote the real and imaginary parts of $a$  by $\mathrm{Re}(a)=a_0$ and $\mathrm{Im}(a)=a_1\boldsymbol{i}+a_2\boldsymbol{j}+a_3\boldsymbol{k},$ respectively, the modulus of $a$ by
$|a|=\sqrt{a_0^2+a_1^2+a_2^2+a_3^2}$ and the conjugate of $a$ by $\overline{a}=a_0-a_1\boldsymbol{i}-a_2\boldsymbol{j}-a_3\boldsymbol{k}$. An element $a\in\mathbb{H}$ is said to be invertible if there exists a quaternion $b$ such that $ab=ba=1$ and we call $b$ the inverse of $a$.

 Denote by $\mathbb{C}$ the field of complex numbers and $M_{n}(\mathbb{C})$ the set of all $n\times n$ complex matrices, $M_{n}(\mathbb{H})$ the set of all $n\times n$ quaternion matrices. If $Q=(q_{ij}) \in M_{n}(\mathbb{H})$, we use $\overline{Q}=(\overline{q_{ij}}),Q^{T}=(q_{ji})$ and $Q^{*}=\overline{Q^{T}}=(\overline{Q})^{T}$ to denote the conjugate, transpose and conjugate transpose of $Q$, respectively. Obviously  if $Q=\overline{Q}$, then $Q$ is a real matrix. If $Q=Q^{T}$, we say $Q$ is symmetric, if $Q=Q^{*}(Q=-Q^{*})$, then we say that $Q$ is Hermitian(skew Hermitian). For two matrices $A,B\in M_{n}(\mathbb{H})$, if there exists an invertible matrix $P\in  M_{n}(\mathbb{H})$ such that $PAP^{-1}=B$, then we say that $A$ is similar to $B$ and denote by $A\sim B$. especially, when the invertible matrix turns to a unitary matrix $U$, $A$ is said to be unitarily equivalent to $B$ and denote by $A\sim_{u} B$.

Every quaternion  matrix $A$ has the form $A=A_1+\boldsymbol{j}A_2$, where $A_1,\ A_2$
are complex matrices. Denote
$$A_\mathbb{C}=
\begin{pmatrix}
A_1 & -\overline{A_2} \\
A_2 & \overline{A_1}
\end{pmatrix}$$
as the complex representation of $A$.
We can know that $A$ is an invertible quaternion matrix if and only if $A_\mathbb{C}$ is an invertible
complex matrix. Clearly, we have $(A+B)_\mathbb{C}=A_\mathbb{C}+B_\mathbb{C}$, $(AB)_\mathbb{C}=A_\mathbb{C}B_\mathbb{C}$. For a complex matrix
$$M=\begin{pmatrix}
M_1 & -\overline{M_2} \\
M_2 & \overline{M_1} \\
\end{pmatrix}$$
where $M_1,\ M_2$ are complex matrices, we say $M_\mathbb{H}=M_1+\boldsymbol{j}M_2$ is the quaternion representation of $M$ and
$(M_{\mathbb{H}})_{\mathbb{C}}=M$. Then for every quaternion matrix $A$, we have $(A_{\mathbb{C}})_{\mathbb{H}}=A$.

The unitary equivalence problem of complex matrices and quaternion matrices has always been a class of problems that attracts much attention.
W. Specht \cite{WS1940} gave the conditions for any two square complex matrices $A$ and $B$ to be unitarily equivalent by the traditional matrix trace.
As for the case of quaternion matrices, D. Dokovi\'{c} and B. H. Smith \cite{DS2008} constructed six unitary trace invariants for $2\times 2$ quaternion matrices which separate the unitary equivalence classes of such matrices and showed this set is minimal. As far as we know, there are no relevant results in the quaternion case  for the case $n\geq 3$.

Unlike considering unitary equivalence from the perspective of traces, Yu. A$\text{l}^\prime$pin and Kh. Ikramov \cite{AI2003} proved that for any two $n \times n$ complex matrices $A$ and $B$, the following assertions are equivalent:
\begin{enumerate}
\item [(1)]$A$ and $B$ are unitarily equivalent;
\item [(2)]the families $\{ A,A^{*} \}$ and $\{ B,B^{*} \}$ are unitarily equivalent;
\item [(3)]the families $\{ A,A^{*} \}$ and $\{ B,B^{*} \}$ are similar.
\end{enumerate}
Kh. Ikramov also showed the necessary and sufficient conditions for a complex matrix to be unitarily equivalent to a real matrix in \cite{I}, he proved that for $A\in M_{n}(\mathbb{C})$, $A$ is unitarily equivalent to a real matrix if and only if $A$ is unitarily equivalent to $\overline{A}$ by a special type of unitary matrix. However, there is no result about the conditions for a quaternion matrix $A\in M_{n}(\mathbb{H})$ to be unitarily equivalent to a complex matrix. In order to settle the problem, we propose some new definitions.
\begin{definition}\label{11}
Let $A$ be a matrix in $M_n(\mathbb{H})$, define $\overline{A}^{\boldsymbol{i}}=-\boldsymbol{i}A\boldsymbol{i}$,  $A^{*{\boldsymbol{i}}}=-\boldsymbol{i}A^*\boldsymbol{i}$, we say that $\overline{A}^{\boldsymbol{i}}$ is the $\boldsymbol{i}$-conjugate of $A$ and $A^{*{\boldsymbol{i}}}$ is the $\boldsymbol{i}$-transpose of $A$.

\end{definition}
\begin{definition}\label{1022}
Let $A$ and $B$ be  two matrices in $M_n(\mathbb{H})$. We say that $A$ is $\boldsymbol{i}$-similar to $B$,  if there exists an invertible matrix $P$ in $M_n(\mathbb{H})$ such that $PA(\overline{P}^{\boldsymbol{i}})^{-1}=B$ and denote by $A\sim_{\boldsymbol{i}}B$. In particular, we say $A$ is unitarily $\boldsymbol{i}$-congruent to $B$ if there exists a unitary matrix $U$ such that $UA(\overline{U}^{\boldsymbol{i}})^{-1}=UA\overline{U^{*}}^{\boldsymbol{i}}=UAU^{*\boldsymbol{i}}=B$ and denote by $A\sim_{\boldsymbol{i}-u} B$.
\end{definition}
From Definition \ref{1022} it is easy to see that the  $\boldsymbol{i}$-similarity and unitary $\boldsymbol{i}$-congruence are both equivalent relationships, and moreover if the invertible matrix $P$ is complex, $\boldsymbol{i}$-similarity is exactly similarity and if the unitary matrix $U$ is complex, the unitary $\boldsymbol{i}$-congruence is just unitary equivalence.

In further, we extend our research. It is worth to mention that L. Huang \cite{H} promoted the definition of consimilarity of matrices from $M_n(\mathbb{C})$ to $M_n(\mathbb{H})$. She defined the $\boldsymbol{j}$-conjugate of a matrix $A\in M_n(\mathbb{H})$ to be $\widetilde{A}=-\boldsymbol{j}A\boldsymbol{j}$ and the $\boldsymbol{j}$-transpose of a matrix $A\in M_n(\mathbb{H})$ to be $A^{J}=-\boldsymbol{j}A^{*}\boldsymbol{j}$. Let $A,B \in M_n(\mathbb{H})$, if there exists an invertible
matrix $P\in M_n(\mathbb{H})$ such that $\widetilde{P^{-1}}AP=B$, then $A$ is said to be consimilar to $B$. If $P$ is a complex matrix, then the quaternion consimilarity coincides with  the consimilarity in $M_n(\mathbb{C})$. In fact, if we replace $\boldsymbol{j}$ with $\boldsymbol{k}$ in the definitions of $\boldsymbol{j}$-conjugate, $\boldsymbol{j}$-transpose and quaternion consimilarity in \cite{H}, then the corresponding definitions become $\boldsymbol{k}$-conjugate, $\boldsymbol{k}$-transpose, the resulting effect will be exactly the same.

In Section $2$, we bring into light the similarities and differences between the classical similarity and $\boldsymbol{i}$-similarity and show the sufficient and necessary conditions for a matrix $A\in M_n(\mathbb{H})$ to be unitarily equivalent to a complex matrix. We prove that if $A\in M_n(\mathbb{H})$, then $A$ is unitarily equivalent to a complex matrix if and only if $A$ is unitarily equivalent to the $\boldsymbol{i}$-conjugate $\overline{A}^{\boldsymbol{i}}$ via an $\boldsymbol{i}$-Hermitian unitary matrix $P$.

Let $A \in M_{n}(\mathbb{H})$, if  a quaternion $\lambda$ satisfies $A\boldsymbol{x}=\boldsymbol{x}\lambda$ for a nonzero vector $\boldsymbol{x}\in \mathbb{H}^{n}$, then we say $\lambda$ is a right eigenvalue of $A$ and $\boldsymbol{x}$ is the right eigenvector with respect to $\lambda$. Similarly, if  a quaternion $\lambda$ satisfies $A\boldsymbol{x}=\lambda\boldsymbol{x}$ for a nonzero vector $\boldsymbol{x}\in \mathbb{H}^{n}$, then we say $\lambda$ is a left eigenvalue of $A$ and $\boldsymbol{x}$ is the left eigenvector with respect to $\lambda$. The right eigenvalues are similar invariants while the left eigenvalues are not.

In Section $3$, we give the definitions of left and right $\boldsymbol{i}$-eigenvalues and point out the connections between them and the left and right eigenvalues. We also calculate the Jordan canonical form of $A \in M_{n}(\mathbb{H})$ under i-similarity.

Recall that for two complex matrices $A,B\in M_n(\mathbb{C})$, if there exists a unitary complex matrix $U$ such that $UAU^{T}=B$, then we say $A$ is unitarily congruent to $B$, it can be seen that the unitary congruence is an equivalent relationship. If the unitary matrix $U$ belongs to $M_n(\mathbb{R})$, then unitary congruence is just unitary equivalence because $UAU^{T}=UA\overline{U^{T}}=UAU^{*}=B$ for $U$ being real. M. Vuji\v{c}i\'{c}, F. Herbut and G. Vuji\v{c}i\'{c} \cite{VHV1972}, F. Herbut, P. Loncke and M. Vuji\v{c}i\'{c} \cite{HLM1974} solved the problem of the canonical form under unitary congruence for a class of complex matrices $A$ which are called conjugate-normal. It was shown in \cite{HR} that a complex matrix $A$ can be unitarily congruent to an upper triangular matrix if and only if all the eigenvalues of $A\overline{A}$ are real and nonnegative, and the Takagi's factorization was proved by using this train of thought.

Furthermore, if we consider unitary congruence in $M_n(\mathbb{H})$ by the same way, we may find that it is not an equivalent relationship anymore because $(AB)^{T}$ is not equal to $B^{T}A^{T}$ for $A,B\in M_n(\mathbb{H})$ in general. Fortunately, the unitary $\boldsymbol{i}$-congruence given in Definition \ref{1022} can be viewed as the generalization of the common unitary congruence in $M_n(\mathbb{C})$. If the unitary matrix $U$ in the definition of unitary $\boldsymbol{i}$-congruence is complex, then unitary $\boldsymbol{i}$-congruence is just unitary equivalence.

In Section $4$, we prove that every quaternion matrix $A \in M_{n}(\mathbb{H})$ is unitarily  $\boldsymbol{i}$-congruent to an upper triangular matrix. Specially, we also prove when a quaternion matrix is unitarily $\boldsymbol{i}$-congruent to an upper triangular matrix $\Delta$ where the main diagonal elements of $\Delta$ are all nonnegative real numbers. Moreover, we give the conditions for quaternion matrices being unitarily  $\boldsymbol{i}$-congruent to complex matrices.

In addition, the normality of a square matrix $A \in M_{n}(\mathbb{H})$ is preserved by unitary equivalence and the conjugate normality of a complex matrix $A \in M_{n}(\mathbb{C})$ is preserved under unitary congruence, however, both the normality and conjugate normality are not maintained under $\boldsymbol{i}$-congruence in $M_{n}(\mathbb{H})$, we define a concept called $\boldsymbol{i}$-conjugate normal for $A \in M_{n}(\mathbb{H})$ which is analogous to conjugate normality in $M_{n}(\mathbb{C})$. We give the canonical form of an $\boldsymbol{i}$-conjugate normal matrix $A$ under unitary $\boldsymbol{i}$-congruence and show a series of other properties of $\boldsymbol{i}$-conjugate normal matrices. Finally, we present a special polar decomposition by using the $\boldsymbol{i}$-Hermitian matrix and unitary matrix and give the sufficient and necessary conditions for a quaternion matrix to be $\boldsymbol{i}$-conjugate normal through this kind of polar decomposition.

\section{i-similarity and unitary equivalence}

\subsection{i-similarity}
In this section, we discuss the fundamental properties of $\boldsymbol{i}$-similarity.

\begin{lemma}\label{haoduo}
Let $A,B$ be two matrices in $M_n(\mathbb{H})$. Then the following holds.
\begin{enumerate}

    \item $ \overline{\overline{A}^{\boldsymbol{i}}}^{\boldsymbol{i}}=A=(A^{*{\boldsymbol{i}}})^{*{\boldsymbol{i}}}$,
    $(\overline{A}^{\boldsymbol{i}})^* = \overline{A^*}^{\boldsymbol{i}}=A^{*{\boldsymbol{i}}}$, $(A^{*{\boldsymbol{i}}})^{*}=(A^*)^{*{\boldsymbol{i}}}$.

    \item If $A$ is invertible, then $(\overline{A}^{\boldsymbol{i}})^{-1}=\overline{A^{-1}}^{\boldsymbol{i}}$, $(A^{*{\boldsymbol{i}}})^{-1}=(A^{-1})^{*{\boldsymbol{i}}}$.
    \item $\overline{A}^{\boldsymbol{i}}=A$, $A^{*}=(A)^{*{\boldsymbol{i}}}$ if and only if $A \in M_n(\mathbb{C})$.

    \item $\overline{A\boldsymbol{x}}^{\boldsymbol{i}}=\overline{A}^{\boldsymbol{i}}\overline{\boldsymbol{x}}^{\boldsymbol{i}}$ for every $\boldsymbol{x}\in\mathbb{H}^n$.
    \item $\overline{A+B}^{\boldsymbol{i}}=\overline{A}^{\boldsymbol{i}}+\overline{B}^{\boldsymbol{i}}$, $(A+B)^{*{\boldsymbol{i}}}=A^{*{\boldsymbol{i}}}+B^{*{\boldsymbol{i}}}$ for $A,B\in M_n(\mathbb{H})$.
    \item $\overline{AB}^{\boldsymbol{i}}=\overline{A}^{\boldsymbol{i}}\cdot\overline{B}^{\boldsymbol{i}}$,  $(AB)^{*{\boldsymbol{i}}}= B^{*{\boldsymbol{i}}}A^{*{\boldsymbol{i}}}$.

\item $\overline{A^{*{\boldsymbol{i}}}}^{\boldsymbol{i}}=(\overline{A}^{\boldsymbol{i}})^{*{\boldsymbol{i}}}=A^{*}$,

\item $\overline{A^T}^{\boldsymbol{i}}=(\overline{A}^{\boldsymbol{i}})^T=\overline{A^{*{\boldsymbol{i}}}}=(\overline{A})^{*{\boldsymbol{i}}}$,

\item $(A^{*{\boldsymbol{i}}})^{T}=(A^{T})^{*{\boldsymbol{i}}}=\overline{\overline{A}^{\boldsymbol{i}}}=\overline{\overline{A}}^{\boldsymbol{i}}  $,

\item $rank(A)=rank(\overline{A}^{\boldsymbol{i}})=rank(A^{*{\boldsymbol{i}}})=rank(A^{*})$,

\item $A^{*{\boldsymbol{i}}}=A^T$ if and only if $A\in \mathbb{R}\oplus \mathbb{R}\boldsymbol{j}\oplus\mathbb{R}\boldsymbol{k}$,

\item $J\overline{A_{\mathbb{C}}}^{\boldsymbol{i}}J^{-1}=(\overline{A}^{\boldsymbol{i}})_{\mathbb{C}}$, where $J=\begin{pmatrix}
I_n & 0 \\
0 & -I_n
\end{pmatrix}$, $\overline{A_{\mathbb{C}}}^{\boldsymbol{i}}=(\overline{A}^{\boldsymbol{i}})_{\mathbb{C}}$ if and only if $A\in M_{n}(\mathbb{C})$.
\end{enumerate}
\end{lemma}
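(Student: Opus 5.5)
The plan is to reduce every item to one explicit description of the map $A\mapsto\overline{A}^{\boldsymbol{i}}$, and then check each identity entrywise or blockwise.

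\emph{The basic formula.} For a single quaternion $q=a_0+a_1\boldsymbol{i}+a_2\boldsymbol{j}+a_3\boldsymbol{k}$, a direct computation gives
$$\sigma(q):=-\boldsymbol{i}q\boldsymbol{i}=a_0+a_1\boldsymbol{i}-a_2\boldsymbol{j}-a_3\boldsymbol{k}.$$
The map $\sigma$ is conjugation by the unit quaternion $\boldsymbol{i}$, since $-\boldsymbol{i}=\boldsymbol{i}^{-1}$. Hence $\sigma$ is an $\mathbb{R}$-algebra automorphism of $\mathbb{H}$, it is an involution ($\sigma^2=\mathrm{id}$), and it commutes with quaternion conjugation $q\mapsto\overline{q}$. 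Because $\boldsymbol{i}I_n$ is a scalar matrix, $\overline{A}^{\boldsymbol{i}}=(\boldsymbol{i}I)^{-1}A(\boldsymbol{i}I)$ is just $\sigma$ applied entrywise. Equivalently, in the notation $A=A_1+\boldsymbol{j}A_2$ we get $\overline{A}^{\boldsymbol{i}}=A_1-\boldsymbol{j}A_2$. This uses that $\boldsymbol{i}$ commutes with the complex matrices $A_1,A_2$ and that $\boldsymbol{j}\boldsymbol{i}=-\boldsymbol{i}\boldsymbol{j}$.

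\emph{Items (1)--(9) and (11).} These follow from the basic formula.
\begin{enumerate}
\item[(a)] Items (4), (5) and (6) (first part) hold because $\overline{AB}^{\boldsymbol{i}}=-\boldsymbol{i}AB\boldsymbol{i}=(-\boldsymbol{i}A\boldsymbol{i})(-\boldsymbol{i}B\boldsymbol{i})$, using $\boldsymbol{i}(-\boldsymbol{i})=1$.
\item[(b)] Item (2) follows by applying (6) to $AA^{-1}=I$.
\item[(c)] For item (1), involutivity gives $\overline{\overline{A}^{\boldsymbol{i}}}^{\boldsymbol{i}}=A$. The relation $(\overline{A}^{\boldsymbol{i}})^*=\overline{A^*}^{\boldsymbol{i}}$ holds because $\sigma$ acts entrywise and commutes with conjugation, so it commutes with $*$. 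By definition $\overline{A^*}^{\boldsymbol{i}}=A^{*\boldsymbol{i}}$, and the remaining equalities in (1) follow immediately.
\item[(d)] The second part of (6) is then $(AB)^{*\boldsymbol{i}}=\overline{B^*A^*}^{\boldsymbol{i}}=B^{*\boldsymbol{i}}A^{*\boldsymbol{i}}$.
\item[(e)] Items (7), (8) and (9) are purely entrywise. Each side is obtained from $A$ by composing some of the three commuting involutive operations: $\sigma$ entrywise, conjugation entrywise, and transposition (which only permutes entries). So each identity reduces to bookkeeping of which operations appear.
\item[(f)] Item (3) holds because $\sigma(q)=q$ iff $a_2=a_3=0$. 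Applied to $A^*$, the same criterion shows $A^{*\boldsymbol{i}}=A^*$ iff $A$ is complex.
\item[(g)] Item (11) reduces to the scalar identity $\overline{\sigma(q)}=a_0-a_1\boldsymbol{i}+a_2\boldsymbol{j}+a_3\boldsymbol{k}$. This equals $q$ iff $a_1=0$, so $A^{*\boldsymbol{i}}=A^T$ iff every entry of $A$ lies in $\mathbb{R}\oplus\mathbb{R}\boldsymbol{j}\oplus\mathbb{R}\boldsymbol{k}$.
\end{enumerate}

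\emph{Item (10).} The only genuinely structural claim is here. Since $\sigma$ is a ring automorphism applied entrywise, it maps a right-linearly independent set of columns to a right-linearly independent set: apply $\sigma$ to a relation $\sum \boldsymbol{x}_k c_k=0$ and use that $\sigma$ is bijective. Therefore $\mathrm{rank}(\overline{A}^{\boldsymbol{i}})=\mathrm{rank}(A)$. Applying the same argument to $A^*$ gives $\mathrm{rank}(A^{*\boldsymbol{i}})=\mathrm{rank}(A^*)$. To close the chain, I would quote the standard fact that $\mathrm{rank}(A^*)=\mathrm{rank}(A)$ for quaternion matrices. If that needs justification, I would pass to the complex representation, where $\mathrm{rank}(A_\mathbb{C})=2\,\mathrm{rank}(A)$ and $(A^*)_\mathbb{C}=(A_\mathbb{C})^*$.

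\emph{Item (12).} Since $A_\mathbb{C}$ is complex, $\overline{A_\mathbb{C}}^{\boldsymbol{i}}=A_\mathbb{C}$. On the other hand, $\overline{A}^{\boldsymbol{i}}=A_1+\boldsymbol{j}(-A_2)$, so
$$(\overline{A}^{\boldsymbol{i}})_\mathbb{C}=\begin{pmatrix}A_1&\overline{A_2}\\-A_2&\overline{A_1}\end{pmatrix},$$
and this is exactly $JA_\mathbb{C}J^{-1}$. Equality without $J$ forces $\overline{A_2}=-\overline{A_2}$, i.e. $A_2=0$, i.e. $A\in M_n(\mathbb{C})$.

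The main obstacle is only notational care, mainly in (7)--(9), where transposes appear. Since transposition does not respect products of quaternion matrices, I will verify these strictly entrywise and never through product rules. The rank equality $\mathrm{rank}(A)=\mathrm{rank}(A^*)$ is the one point that relies on an outside fact.
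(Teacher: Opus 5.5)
Your proof is correct. The paper states this lemma without proof, treating all twelve items as direct checks from Definition~\ref{11}, so there is no competing argument to compare against.

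Your organizing observation is what makes the checks routine. You note that $\overline{A}^{\boldsymbol{i}}=(\boldsymbol{i}I_n)^{-1}A(\boldsymbol{i}I_n)$ is the entrywise action of the inner automorphism $q\mapsto\boldsymbol{i}^{-1}q\boldsymbol{i}$. This map fixes $\mathbb{C}$, negates the $\boldsymbol{j},\boldsymbol{k}$ parts, and commutes with both quaternion conjugation and transposition. That turns items (1)--(9), (11) and (12) into bookkeeping in the group generated by three commuting involutions. Your explicit computations for (3), (11) and (12) check out, including $JA_{\mathbb{C}}J^{-1}=(\overline{A}^{\boldsymbol{i}})_{\mathbb{C}}$ and the condition $A_2=0$ for equality without $J$.

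Two small remarks:
\begin{enumerate}
\item Additivity in (5) comes straight from the definition, not from the product identity you cite in (a).
\item For (10) you can skip the independence argument. $\overline{A}^{\boldsymbol{i}}$ and $A^{*\boldsymbol{i}}$ are similar to $A$ and $A^*$ via the scalar matrix $\boldsymbol{i}I_n$, as in Lemma~\ref{RIST}(1). So the only outside input is $\mathrm{rank}(A^*)=\mathrm{rank}(A)$, which you justify correctly.
\end{enumerate}

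Routing (10) through $A^*$ rather than $A^T$ is essential. For $A=\begin{pmatrix}1&\boldsymbol{j}\\ \boldsymbol{i}&\boldsymbol{k}\end{pmatrix}$ one has $\mathrm{rank}(A)=1$ but $\mathrm{rank}(A^T)=2$.
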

Notice that for two complex matrices $A$ and $B$, the relationship $\overline{AB}=\overline{A}\ \overline{B}$ always holds. But this is not true for the case one of $A$ and $B$ becomes quaternion matrix generally. The first equation in $(6)$ of Lemma \ref{haoduo} show that the $\boldsymbol{i}$-conjugate in $M_{n}(\mathbb{H})$ has the similar property with the classic conjugate in $M_{n}(\mathbb{C})$.

\begin{lemma}\label{yingwen}
Let $a=a_{0}+a_{1}\boldsymbol{i}+a_{2}\boldsymbol{j}+a_{3}\boldsymbol{k}$ and $a^{\prime}=\sqrt{|a|^{2}-a_{1}^{2}}+a_{1}\boldsymbol{i}$, then $a\sim_{\boldsymbol{i}}a^{\prime}$.
\end{lemma}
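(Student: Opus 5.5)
The plan is to turn the $\boldsymbol{i}$-similarity equation for the scalars $a,a'$ into an ordinary similarity (conjugation) problem in $\mathbb{H}$. Then we can use the classical fact that two quaternions $q,q'$ satisfy $pqp^{-1}=q'$ for some nonzero $p$ if and only if $\mathrm{Re}(q)=\mathrm{Re}(q')$ and $|q|=|q'|$.

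First, for a nonzero quaternion $p$ I will compute $(\overline{p}^{\boldsymbol{i}})^{-1}$ explicitly. Since $\overline{p}^{\boldsymbol{i}}=-\boldsymbol{i}p\boldsymbol{i}$, a one-line check gives $(-\boldsymbol{i}p\boldsymbol{i})(-\boldsymbol{i}p^{-1}\boldsymbol{i})=1$, using $\boldsymbol{i}^2=-1$. This agrees with Lemma \ref{haoduo}(2), so $(\overline{p}^{\boldsymbol{i}})^{-1}=-\boldsymbol{i}p^{-1}\boldsymbol{i}$. Hence the required relation $pa(\overline{p}^{\boldsymbol{i}})^{-1}=a'$ reads $-pa\boldsymbol{i}p^{-1}\boldsymbol{i}=a'$. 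Multiplying on the right by $\boldsymbol{i}$ shows this is equivalent to
$$p(a\boldsymbol{i})p^{-1}=a'\boldsymbol{i}.$$
So $a\sim_{\boldsymbol{i}}a'$ holds as soon as $a\boldsymbol{i}$ and $a'\boldsymbol{i}$ are similar in the usual sense.

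Next I compare the two invariants. Since $\boldsymbol{i}\boldsymbol{i}=-1$, $\boldsymbol{j}\boldsymbol{i}=-\boldsymbol{k}$ and $\boldsymbol{k}\boldsymbol{i}=\boldsymbol{j}$, the real part of $a\boldsymbol{i}$ is $-a_1$. Writing $a'=\sqrt{|a|^2-a_1^2}+a_1\boldsymbol{i}$, the real part of $a'\boldsymbol{i}$ is also $-a_1$. The moduli are $|a\boldsymbol{i}|=|a|$ and $|a'\boldsymbol{i}|=|a'|=\sqrt{(|a|^2-a_1^2)+a_1^2}=|a|$. Both invariants agree, so the classical criterion yields a nonzero $p$ with $p(a\boldsymbol{i})p^{-1}=a'\boldsymbol{i}$, and hence $a\sim_{\boldsymbol{i}}a'$. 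The degenerate case $a=0$ is trivial, with $p=1$.

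The only real input is the conjugacy criterion in $\mathbb{H}$. If I need to justify it rather than quote it, I would argue as follows. Write $q=r+v$ and $q'=r+v'$ with $r$ real and $v,v'$ pure imaginary of equal length. Conjugation by unit quaternions acts on pure imaginary quaternions as the full rotation group $SO(3)$, so some unit $p$ rotates $v$ to $v'$. One could even take $p$ explicitly, e.g.\ $p=|v||v'|-v'v$ when this is nonzero, but it is simpler to cite the standard fact. I expect this step to be the main point to handle carefully. The rest is the short computation above reducing $\boldsymbol{i}$-similarity to similarity.
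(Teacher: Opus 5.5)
Your proof is correct, and it takes a different route from the paper's.

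The paper writes down $p=a_3+a_2\boldsymbol{i}-(\sqrt{|a|^2-a_1^2}-a_0)\boldsymbol{k}$ and leaves the reader to check $pa=a'\,\overline{p}^{\boldsymbol{i}}$ by multiplying out. It splits off the case $a_0\ge 0$, $a_2=a_3=0$ because that is exactly when this $p$ vanishes, and there $a=a'$.

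You instead use $(\overline{p}^{\boldsymbol{i}})^{-1}=-\boldsymbol{i}p^{-1}\boldsymbol{i}$ to turn the $\boldsymbol{i}$-similarity equation into the ordinary conjugation $p(a\boldsymbol{i})p^{-1}=a'\boldsymbol{i}$. This is the scalar case of Lemma \ref{RIST}(2). You then compare real parts ($-a_1$ for both) and moduli ($|a|$ for both).

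What each route buys:
\begin{itemize}
\item The paper's argument is fully explicit and needs nothing beyond a routine product computation. Its $p$ is in fact one concrete solution of your conjugation problem.
\item Your argument relies on the classical conjugacy criterion in $\mathbb{H}$, but avoids both the computation and the case split.
\item Because $\mathrm{Re}(a\boldsymbol{i})=-a_1$, your same two lines prove both directions of Proposition \ref{QISR} ($a\sim_{\boldsymbol{i}}b$ iff $|a|=|b|$ and $a_1=b_1$). The paper states that proposition without proof.
\end{itemize}

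Your sketch of the criterion is also sound. The choice $p=|v||v'|-v'v$ satisfies $pv=v'p$ whenever it is nonzero. It vanishes only when $v'=-v$, and in that case any unit pure quaternion orthogonal to $v$ does the job.
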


\begin{proof}
If $a_{0}\geq 0,\ a_{2}=a_{3}=0$, it is obvious that $a=a_0+a_{1}\boldsymbol{i}=a^{\prime}$. Otherwise, let
$$
p=a_{3}+a_{2}\boldsymbol{i}-(\sqrt{|a|^{2}-a_{1}^{2}}-a_{0})\boldsymbol{k}\neq 0,
$$
it is easy to verify that $pa(\overline{p}^{\boldsymbol{i}})^{-1}=a^{\prime}$. Furthermore, the quaternion $p$ can be taken of unit length.
\end{proof}
For two nonzero quaternions $a=a_{0}+a_{1}\boldsymbol{i}+a_{2}\boldsymbol{j}+a_{3}\boldsymbol{k}$ and $b=b_{0}+b_{1}\boldsymbol{i}+b_{2}\boldsymbol{j}+b_{3}\boldsymbol{k}$, we know that $a\sim b(a\sim_{u} b)$ iff $|a|=|b|$ and $a_{0}=b_{0}$, there is an analogous conclusion for $\boldsymbol{i}$-similarity between two quaternions.

\begin{proposition}\label{QISR}
Let $a=a_{0}+a_{1}\boldsymbol{i}+a_{2}\boldsymbol{j}+a_{3}\boldsymbol{k}, b=b_{0}+b_{1}\boldsymbol{i}+b_{2}\boldsymbol{j}+b_{3}\boldsymbol{k}$ be two quaternions, then $a \sim_{\boldsymbol{i}} b(a \sim_{\boldsymbol{i}-u} b)$ if and only if $|a|=|b|$ and $a_{1}=b_{1}$.
\end{proposition}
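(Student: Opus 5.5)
Write $\mathrm{Im}_{\boldsymbol{i}}$ for the $\boldsymbol{i}$-coefficient. For $1\times 1$ matrices, $b=pa(\overline{p}^{\boldsymbol{i}})^{-1}$ with $p\in\mathbb{H}\setminus\{0\}$, and a unitary $1\times1$ matrix is a unit quaternion. The plan is to prove necessity for general nonzero $p$ and sufficiency via Lemma \ref{yingwen} with a unit $p$. Then both the $\boldsymbol{i}$-similarity and the unitary $\boldsymbol{i}$-congruence statements follow at once, since the latter implies the former.

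\emph{Necessity.} Assume $b=pa(\overline{p}^{\boldsymbol{i}})^{-1}$ with $\overline{p}^{\boldsymbol{i}}=-\boldsymbol{i}p\boldsymbol{i}$. Note $|\overline{p}^{\boldsymbol{i}}|=|p|$, so by multiplicativity of the modulus $|b|=|p||a||p|^{-1}=|a|$. For the $\boldsymbol{i}$-component, rewrite the relation using $(\overline{p}^{\boldsymbol{i}})^{-1}=(-\boldsymbol{i}p\boldsymbol{i})^{-1}=\boldsymbol{i}^{-1}p^{-1}(-\boldsymbol{i})^{-1}=-\boldsymbol{i}p^{-1}\boldsymbol{i}$. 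Hence $b=-pa\boldsymbol{i}p^{-1}\boldsymbol{i}$, and therefore
$$b\boldsymbol{i}^{-1}=-b\boldsymbol{i}\quad\text{satisfies}\quad -b\boldsymbol{i}\cdot(-1)\cdots$$
More cleanly, $b\,\boldsymbol{i}^{-1}=b(-\boldsymbol{i})=p(a\boldsymbol{i})p^{-1}\cdot(\boldsymbol{i}\cdot(-\boldsymbol{i}))$, which I will simplify as follows. From $b=-pa\boldsymbol{i}p^{-1}\boldsymbol{i}$, multiply on the right by $\boldsymbol{i}$ to get $b\boldsymbol{i}=pa\boldsymbol{i}p^{-1}$. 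Thus $b\boldsymbol{i}$ is similar (in the usual sense) to $a\boldsymbol{i}$, so $\mathrm{Re}(b\boldsymbol{i})=\mathrm{Re}(a\boldsymbol{i})$, because the real part is invariant under $q\mapsto pqp^{-1}$. Since $\mathrm{Re}(a\boldsymbol{i})=-a_1$, this gives $a_1=b_1$. This trick, converting $\boldsymbol{i}$-similarity of $a,b$ into ordinary similarity of $a\boldsymbol{i},b\boldsymbol{i}$, is the key step. The same identity also yields $|a|=|b|$.

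\emph{Sufficiency.} If $|a|=|b|$ and $a_1=b_1$, then $a'=b'=\sqrt{|a|^2-a_1^2}+a_1\boldsymbol{i}$. By Lemma \ref{yingwen} there are unit quaternions $p,q$ with $a\sim_{\boldsymbol{i}-u}a'=b'$ and $b\sim_{\boldsymbol{i}-u}b'$. Since unitary $\boldsymbol{i}$-congruence is an equivalence relation (noted after Definition \ref{1022}), we get $a\sim_{\boldsymbol{i}-u}b$ and hence $a\sim_{\boldsymbol{i}}b$. Alternatively, one can argue directly: $a\boldsymbol{i}$ and $b\boldsymbol{i}$ have equal modulus and equal real part, so the stated classical criterion gives a unit $p$ with $b\boldsymbol{i}=p(a\boldsymbol{i})p^{-1}$, and reversing the computation above gives $b=pa(\overline{p}^{\boldsymbol{i}})^{-1}$.

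The only real obstacle is the zero case: if $a=0$, then $b=0$ trivially in both directions. The classical criterion quoted before the proposition is stated for nonzero quaternions, but it holds trivially when $a\boldsymbol{i}=b\boldsymbol{i}=0$.
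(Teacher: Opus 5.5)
Your argument is correct and follows the route the paper implies; the paper states the proposition without a written proof. Necessity is the $1\times 1$ case of the reduction $a\sim_{\boldsymbol{i}}b\Leftrightarrow a\boldsymbol{i}\sim b\boldsymbol{i}$ from Lemma \ref{RIST}, combined with the invariance of real part and modulus under ordinary similarity. Sufficiency is Lemma \ref{yingwen}, with $a'=b'$ serving as a common normal form reached by unit quaternions.

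Delete the abandoned scratch lines before ``From $b=-pa\boldsymbol{i}p^{-1}\boldsymbol{i}$\ldots''. The identity $b\,\boldsymbol{i}^{-1}=p(a\boldsymbol{i})p^{-1}(\boldsymbol{i}\cdot(-\boldsymbol{i}))$ written there is off by a sign, whereas the derivation $b\boldsymbol{i}=pa\boldsymbol{i}p^{-1}$ that follows it is right.
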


\begin{lemma}\label{RIST}
Let $A, B$ be two matrices in $M_n(\mathbb{H})$. Then

\begin{enumerate}
    \item $A \sim_{u} \overline{A}^{\boldsymbol{i}}$.

    \item $A\sim_{\boldsymbol{i}}B \Leftrightarrow\boldsymbol{i}A\sim\boldsymbol{i}B \Leftrightarrow A\boldsymbol{i}\sim\boldsymbol{i}B \Leftrightarrow \boldsymbol{i}A\sim B\boldsymbol{i}  \Leftrightarrow A\boldsymbol{i}\sim B\boldsymbol{i}$.
    \item  $A\sim_{\boldsymbol{i}-u}B \Leftrightarrow\boldsymbol{i}A\sim_{u}\boldsymbol{i}B \Leftrightarrow A\boldsymbol{i}\sim_{u}\boldsymbol{i}B \Leftrightarrow \boldsymbol{i}A\sim_{u} B\boldsymbol{i}  \Leftrightarrow A\boldsymbol{i}\sim_{u} B\boldsymbol{i}$.
\end{enumerate}
\end{lemma}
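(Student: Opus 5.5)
For (1), note that $\overline{A}^{\boldsymbol{i}}=-\boldsymbol{i}A\boldsymbol{i}=(\boldsymbol{i}I)A(\boldsymbol{i}I)^{-1}$, because $(\boldsymbol{i}I)^{-1}=-\boldsymbol{i}I$. The matrix $U=\boldsymbol{i}I_n$ is unitary, since $U^*U=(-\boldsymbol{i})(\boldsymbol{i})I=I$. Hence $A\sim_u \overline{A}^{\boldsymbol{i}}$ directly.

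For (2), the central identity is $\overline{P}^{\boldsymbol{i}}=-\boldsymbol{i}P\boldsymbol{i}$, so $(\overline{P}^{\boldsymbol{i}})^{-1}=-\boldsymbol{i}P^{-1}\boldsymbol{i}$, using Lemma \ref{haoduo}(2) or a direct check. Therefore
$$PA(\overline{P}^{\boldsymbol{i}})^{-1}=B \iff -PA\boldsymbol{i}P^{-1}\boldsymbol{i}=B \iff P(A\boldsymbol{i})P^{-1}=B\boldsymbol{i},$$
where the last step multiplies on the right by $\boldsymbol{i}$ (using $\boldsymbol{i}^2=-1$). So $A\sim_{\boldsymbol{i}}B\iff A\boldsymbol{i}\sim B\boldsymbol{i}$, with the same matrix $P$ in both directions.

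The remaining equivalences come from inserting the unitary conjugation $X\mapsto \boldsymbol{i}X\boldsymbol{i}^{-1}$, which is part (1) applied to $X$. This gives $A\boldsymbol{i}\sim \boldsymbol{i}(A\boldsymbol{i})\boldsymbol{i}^{-1}=\boldsymbol{i}A$ and similarly $B\boldsymbol{i}\sim\boldsymbol{i}B$. Since similarity is an equivalence relation, $A\boldsymbol{i}\sim B\boldsymbol{i}$ is equivalent to each of $\boldsymbol{i}A\sim\boldsymbol{i}B$, $A\boldsymbol{i}\sim\boldsymbol{i}B$, and $\boldsymbol{i}A\sim B\boldsymbol{i}$.

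For (3), repeat the argument with $P=U$ unitary. The displayed equivalence uses the same $U$ on both sides, so $A\sim_{\boldsymbol{i}-u}B\iff A\boldsymbol{i}\sim_u B\boldsymbol{i}$. The auxiliary conjugating matrix $\boldsymbol{i}I$ is unitary, so the chaining step preserves unitary equivalence, and $\sim_u$ is transitive.

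The only delicate point is the noncommutativity. One must multiply by $\boldsymbol{i}$ on the correct side, and use that the scalar matrix $\boldsymbol{i}I$ does not commute with $P$, which is why the identity is written as $\overline{P}^{\boldsymbol{i}}=-\boldsymbol{i}P\boldsymbol{i}$ rather than $P$. Once the $\boldsymbol{i}$'s are placed correctly, every step is a one-line manipulation, so I expect no real obstacle.
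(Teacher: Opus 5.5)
Your proof is correct and follows essentially the paper's route: conjugation by the scalar unitary $\boldsymbol{i}I_n$ for (1), the identity $(\overline{P}^{\boldsymbol{i}})^{-1}=-\boldsymbol{i}P^{-1}\boldsymbol{i}$ to turn $\boldsymbol{i}$-similarity (resp.\ unitary $\boldsymbol{i}$-congruence) into ordinary similarity (resp.\ unitary equivalence), and $\boldsymbol{i}I_n$-conjugation to pass between $\boldsymbol{i}X$ and $X\boldsymbol{i}$. The only difference is cosmetic. You work from the defining form $PA(\overline{P}^{\boldsymbol{i}})^{-1}=B$ and arrive at $A\boldsymbol{i}\sim B\boldsymbol{i}$ with the same $P$, whereas the paper uses the rearranged form $B=(\overline{P}^{\boldsymbol{i}})^{-1}AP$ and arrives at $\boldsymbol{i}A\sim\boldsymbol{i}B$.
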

\begin{proof}

\begin{enumerate}
        \item Let $U=\mathrm{diag}(-\boldsymbol{i},-\boldsymbol{i},\cdots,-\boldsymbol{i})=-\boldsymbol{i}I_{n}$, then $U$ is unitary and $UAU^{*}= \overline{A}^{\boldsymbol{i}}$.

        \item Following from $A\sim_{\boldsymbol{i}}B$, there exists an invertible matrix $P\in M_n(\mathbb{H})$ such that
    \begin{equation}\label{11271}
    B=(\overline{P}^{\boldsymbol{i}})^{-1}AP=-\boldsymbol{i}P^{-1}\boldsymbol{i}AP,
    \end{equation}
    one can see that (\ref{11271}) is equivalent to
    $P^{-1}\boldsymbol{i}AP=\boldsymbol{i}B$, which means the same fact as $\boldsymbol{i}A\sim\boldsymbol{i}B$. Let  $Q=\mathrm{diag}(\boldsymbol{i},\boldsymbol{i},\cdots,\boldsymbol{i})=\boldsymbol{i}I_{n}$, since $Q^{-1}\boldsymbol{i}AQ=A\boldsymbol{i}$, $Q^{-1}\boldsymbol{i}BQ=B\boldsymbol{i}$, we have $\boldsymbol{i}A\sim A\boldsymbol{i}$ and $\boldsymbol{i}B\sim B\boldsymbol{i}$.
    \item The proof of $(3)$ is analogous to that of $(2)$.
    \end{enumerate}
    \end{proof}
L. Wolf \cite{W} proved that the similarity between two quaternion matrices $A$ and $B$ is equivalent to the similarity between their complex representation matrices $A_{\mathbb{C}}$ and $B_{\mathbb{C}}$.

\begin{theorem}[\cite{W}]\label{RQS}
Let $A, B$ be in $M_n(\mathbb{H})$, then $A$ and $B$ are similar in $M_n(\mathbb{H})$ if and only if $A_{\mathbb{C}}$ and
$B_{\mathbb{C}}$ are similar in $M_n(\mathbb{C})$.
\end{theorem}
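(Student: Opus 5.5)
The plan is to use the complex representation $A\mapsto A_{\mathbb{C}}$, which is an injective real-algebra homomorphism from $M_n(\mathbb{H})$ into $M_{2n}(\mathbb{C})$. Its image is the set $\mathcal{Q}=\{M: M\Phi=\Phi\overline{M}\}$, where $\Phi=\begin{pmatrix}0&-I_n\\ I_n&0\end{pmatrix}$.

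The direction ``$A\sim B$ implies $A_{\mathbb{C}}\sim B_{\mathbb{C}}$'' is immediate. If $PAP^{-1}=B$, then $P_{\mathbb{C}}A_{\mathbb{C}}(P_{\mathbb{C}})^{-1}=B_{\mathbb{C}}$, because $(PQ)_{\mathbb{C}}=P_{\mathbb{C}}Q_{\mathbb{C}}$ and $(I)_{\mathbb{C}}=I_{2n}$.

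For the converse, suppose $SA_{\mathbb{C}}S^{-1}=B_{\mathbb{C}}$ for some invertible $S\in M_{2n}(\mathbb{C})$. We must produce an invertible intertwiner lying in $\mathcal{Q}$. Consider the complex vector space
$$V=\{X\in M_{2n}(\mathbb{C}): XA_{\mathbb{C}}=B_{\mathbb{C}}X\}.$$
Define $\sigma(X)=\Phi\overline{X}\Phi^{-1}$. Since $A_{\mathbb{C}}$ and $B_{\mathbb{C}}$ both commute with $\Phi$ in the twisted sense ($\Phi\overline{A_{\mathbb{C}}}\Phi^{-1}=A_{\mathbb{C}}$), the map $\sigma$ preserves $V$. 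It is conjugate-linear, and $\sigma^2=\mathrm{id}$ because $\Phi\overline{\Phi}=\Phi^2=-I$. Hence $W=\{X\in V:\sigma(X)=X\}$ is a real form of $V$, that is, $V=W\oplus\boldsymbol{i}W$ as real spaces. This holds since $X=\tfrac12(X+\sigma X)+\boldsymbol{i}\cdot\tfrac{1}{2\boldsymbol{i}}(X-\sigma X)$, and $\sigma(\boldsymbol{i}Y)=-\boldsymbol{i}\sigma(Y)$. Moreover, $W=V\cap\mathcal{Q}$ consists exactly of the representations $P_{\mathbb{C}}$ of quaternion matrices $P$ satisfying $PA=BP$.

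The key step, and the main obstacle, is finding an invertible element of $W$. The function $\det$ restricted to $V$ is a complex polynomial that is not identically zero, since $S\in V$. Choose a real basis $W_1,\dots,W_m$ of $W$. This is also a complex basis of $V$. Then $f(z_1,\dots,z_m)=\det(\sum z_kW_k)$ is a nonzero complex polynomial. A nonzero polynomial cannot vanish on all of $\mathbb{R}^m$, so some real point $t$ gives $X_0=\sum t_kW_k\in W$ with $\det X_0\neq 0$.

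Finally, set $P=(X_0)_{\mathbb{H}}$. Then $P_{\mathbb{C}}$ is invertible, so $P$ is invertible, as noted in the introduction. From $(PA)_{\mathbb{C}}=(BP)_{\mathbb{C}}$ and injectivity of the representation we get $PA=BP$, hence $PAP^{-1}=B$, and $A\sim B$.
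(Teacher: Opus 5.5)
Your proof is correct. The paper gives no argument of its own here; it simply quotes the result from Wolf.

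The natural alternative built from the paper's other tools is the quaternion Jordan form (Lemma~\ref{H}). Write $A\sim J_A$ and $B\sim J_B$ with complex Jordan matrices whose eigenvalues have nonnegative imaginary part, and note that $J_{\mathbb{C}}=J\oplus\overline{J}$ for complex $J$. One recovers $J_A$ from the complex Jordan form of $A_{\mathbb{C}}$ by keeping the blocks with eigenvalues in the open upper half-plane plus half of the blocks with real eigenvalues. Hence $A_{\mathbb{C}}\sim B_{\mathbb{C}}$ forces $J_A=J_B$ up to order. That route is short, but it presupposes the existence of the quaternion Jordan form, which is considerably less elementary than the statement itself.

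Your descent argument needs only three things:
\begin{itemize}
\item multiplicativity and injectivity of $A\mapsto A_{\mathbb{C}}$;
\item the conjugate-linear involution $X\mapsto\Phi\overline{X}\Phi^{-1}$ on the intertwiner space;
\item the fact that a nonzero polynomial cannot vanish on all of $\mathbb{R}^m$.
\end{itemize}
It is the quaternionic analogue of the classical proof that real matrices similar over $\mathbb{C}$ are similar over $\mathbb{R}$. It also extends unchanged to simultaneous similarity of families (intersect the intertwiner spaces), which no canonical-form argument can handle.

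One small simplification: you do not need a basis of $W$. Write $S=Y+\boldsymbol{i}Z$ with $Y=\frac12(S+\sigma(S))$ and $Z=\frac{1}{2\boldsymbol{i}}(S-\sigma(S))$, both in $W$. The one-variable polynomial $z\mapsto\det(Y+zZ)$ is nonzero at $z=\boldsymbol{i}$, hence nonzero at some real $z$, and for real $z$ we have $Y+zZ\in W$.
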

F. Zhang \cite{Z} proved that if two quaternion matrices $A$ and $B$ are similar, then they have the same characteristic polynomial.
\begin{theorem}[\cite{Z}]\label{eng}
Let $A, B$ be in $M_n(\mathbb{H})$. If $A$ is similar to $B$, then $F_A(\lambda)=F_B(\lambda)$, where $\lambda\in \mathbb{C}$,
$F_A(\lambda)=|\lambda I_{2n}-A_{\mathbb{C}}|$ is the characteristic polynomial of $A$.
\end{theorem}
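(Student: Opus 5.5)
The plan is to transport the similarity from $M_n(\mathbb{H})$ to $M_{2n}(\mathbb{C})$ through the complex representation. There, the statement reduces to the classical fact that similar complex matrices have equal characteristic polynomials. Only the algebraic properties of $A\mapsto A_{\mathbb{C}}$ recorded in the introduction are needed: $(A+B)_{\mathbb{C}}=A_{\mathbb{C}}+B_{\mathbb{C}}$, $(AB)_{\mathbb{C}}=A_{\mathbb{C}}B_{\mathbb{C}}$, and the fact that $A$ is invertible exactly when $A_{\mathbb{C}}$ is.

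First, suppose $A\sim B$, so $B=PAP^{-1}$ for some invertible $P\in M_n(\mathbb{H})$. Writing $I_n=I_n+\boldsymbol{j}\,0$ gives $(I_n)_{\mathbb{C}}=I_{2n}$. Applying multiplicativity to $PP^{-1}=P^{-1}P=I_n$ then yields $P_{\mathbb{C}}(P^{-1})_{\mathbb{C}}=(P^{-1})_{\mathbb{C}}P_{\mathbb{C}}=I_{2n}$, that is, $(P^{-1})_{\mathbb{C}}=(P_{\mathbb{C}})^{-1}$. Applying multiplicativity twice more gives
$$B_{\mathbb{C}}=P_{\mathbb{C}}A_{\mathbb{C}}(P_{\mathbb{C}})^{-1}.$$
This is just the easy direction of Theorem \ref{RQS}, so one could also simply cite it.

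Second, for each $\lambda\in\mathbb{C}$ I use $\lambda I_{2n}=P_{\mathbb{C}}(\lambda I_{2n})(P_{\mathbb{C}})^{-1}$, which holds because $\lambda I_{2n}$ is a complex scalar matrix and so commutes with everything in $M_{2n}(\mathbb{C})$. This gives
$$\lambda I_{2n}-B_{\mathbb{C}}=P_{\mathbb{C}}(\lambda I_{2n}-A_{\mathbb{C}})(P_{\mathbb{C}})^{-1}.$$
Taking determinants, which are multiplicative over the field $\mathbb{C}$, gives $|\lambda I_{2n}-B_{\mathbb{C}}|=|\lambda I_{2n}-A_{\mathbb{C}}|$. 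Hence $F_A(\lambda)=F_B(\lambda)$ for all $\lambda\in\mathbb{C}$, and so $F_A=F_B$ as polynomials.

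There is no real obstacle. The only point needing care is to work with $\lambda I_{2n}$ in $M_{2n}(\mathbb{C})$ rather than with a quaternionic matrix $\lambda I_n$. A non-real complex $\lambda$ does not commute with $P\in M_n(\mathbb{H})$, and $(\lambda I_n)_{\mathbb{C}}=\mathrm{diag}(\lambda I_n,\overline{\lambda}I_n)\neq\lambda I_{2n}$. Performing the $\lambda$-shift after passing to the complex representation avoids this noncommutativity issue completely.
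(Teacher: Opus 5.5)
Your proof is correct. Multiplicativity of $A\mapsto A_{\mathbb{C}}$ gives $B_{\mathbb{C}}=P_{\mathbb{C}}A_{\mathbb{C}}(P_{\mathbb{C}})^{-1}$ (the easy half of Theorem \ref{RQS}), similar complex matrices have equal characteristic polynomials, and you rightly perform the $\lambda I_{2n}$ shift inside $M_{2n}(\mathbb{C})$. The paper gives no proof of its own, only a citation to Zhang \cite{Z}, and its placement right after Theorem \ref{RQS} points to exactly this standard argument.
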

By Theorem \ref{RQS} and Theorem \ref{eng}, we can obtain following assertions.
\begin{corollary}\label{11241}
Let $A, B$ be in $M_n(\mathbb{H})$, then the following statements hold true:
\begin{enumerate}
\item [1.]$A\sim_{\boldsymbol{i}}B \Leftrightarrow Q_{\boldsymbol{i}}A_{\mathbb{C}}\sim Q_{\boldsymbol{i}}B_{\mathbb{C}}$ where $Q_{\boldsymbol{i}}=\boldsymbol{i}I_{n}\oplus(-\boldsymbol{i}I_{n})$.
\item [2.]$A\sim_{\boldsymbol{i}}B\Rightarrow F_{\boldsymbol{i}A}(\lambda)=F_{\boldsymbol{i}B}(\lambda)$.
\item [3.]$A\sim_{\boldsymbol{i}}B\Rightarrow p(\boldsymbol{i}A)\sim p(\boldsymbol{i}B)$ for any real coefficient polynomial $p$.
\end{enumerate}

\end{corollary}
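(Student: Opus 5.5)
The plan is to reduce each item to statements about ordinary similarity of the matrices $\boldsymbol{i}A$ and $\boldsymbol{i}B$. Lemma \ref{RIST}(2) already does this reduction, and Theorems \ref{RQS} and \ref{eng} give what ordinary similarity implies. The only computation is identifying the complex representation of the scalar matrix $\boldsymbol{i}I_n$.

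\emph{Item 1.} By Lemma \ref{RIST}(2), $A\sim_{\boldsymbol{i}}B$ is equivalent to $\boldsymbol{i}A\sim\boldsymbol{i}B$. By Theorem \ref{RQS}, this is equivalent to $(\boldsymbol{i}A)_{\mathbb{C}}\sim(\boldsymbol{i}B)_{\mathbb{C}}$. Since $(XY)_{\mathbb{C}}=X_{\mathbb{C}}Y_{\mathbb{C}}$, we have $(\boldsymbol{i}A)_{\mathbb{C}}=(\boldsymbol{i}I_n)_{\mathbb{C}}A_{\mathbb{C}}$. It remains to compute $(\boldsymbol{i}I_n)_{\mathbb{C}}$. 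Write $\boldsymbol{i}I_n=A_1+\boldsymbol{j}A_2$ with $A_1=\boldsymbol{i}I_n$ and $A_2=0$. The definition of the complex representation then gives the block diagonal matrix $\boldsymbol{i}I_n\oplus\overline{\boldsymbol{i}}I_n=\boldsymbol{i}I_n\oplus(-\boldsymbol{i}I_n)=Q_{\boldsymbol{i}}$. Hence $(\boldsymbol{i}A)_{\mathbb{C}}=Q_{\boldsymbol{i}}A_{\mathbb{C}}$, and likewise $(\boldsymbol{i}B)_{\mathbb{C}}=Q_{\boldsymbol{i}}B_{\mathbb{C}}$, which proves item 1.

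\emph{Item 2.} If $A\sim_{\boldsymbol{i}}B$, then $\boldsymbol{i}A\sim\boldsymbol{i}B$ by Lemma \ref{RIST}(2). Theorem \ref{eng} applied to $\boldsymbol{i}A$ and $\boldsymbol{i}B$ gives $F_{\boldsymbol{i}A}(\lambda)=F_{\boldsymbol{i}B}(\lambda)$. Equivalently, by item 1, $\det(\lambda I_{2n}-Q_{\boldsymbol{i}}A_{\mathbb{C}})=\det(\lambda I_{2n}-Q_{\boldsymbol{i}}B_{\mathbb{C}})$.

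\emph{Item 3.} Again take $P$ invertible with $P(\boldsymbol{i}A)P^{-1}=\boldsymbol{i}B$. Then $P(\boldsymbol{i}A)^kP^{-1}=(\boldsymbol{i}B)^k$ for every $k\ge 0$ by telescoping. Write $p(t)=\sum_k c_kt^k$ with $c_k\in\mathbb{R}$. Real scalars lie in the center of $\mathbb{H}$, so $Pc_kX=c_kPX$. Summing gives $P\,p(\boldsymbol{i}A)\,P^{-1}=p(\boldsymbol{i}B)$.

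The only point needing care is exactly this step. For non-real quaternion coefficients, $c_k$ would not commute with the entries of $P$ and the argument fails, which is why the statement restricts to real polynomials. Nothing else is a real obstacle; the main thing to double-check is the block computation of $(\boldsymbol{i}I_n)_{\mathbb{C}}$ against the convention $A=A_1+\boldsymbol{j}A_2$.
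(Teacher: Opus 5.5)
Your proposal is correct and follows the paper's route. The paper derives the corollary directly from Theorems \ref{RQS} and \ref{eng}, after reducing $A\sim_{\boldsymbol{i}}B$ to $\boldsymbol{i}A\sim\boldsymbol{i}B$ via Lemma \ref{RIST}(2). Your explicit check that $(\boldsymbol{i}A)_{\mathbb{C}}=(\boldsymbol{i}I_n)_{\mathbb{C}}A_{\mathbb{C}}=Q_{\boldsymbol{i}}A_{\mathbb{C}}$, and your centrality argument for real coefficients in item 3, fill in details the paper leaves implicit.
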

Actually, the first conclusion in Corollary \ref{11241} is analogous to Theorem $5$ in \cite{JiangCL} essentially, it is worth to note that the complex representation we use is different from the one in \cite{JiangCL}.

A relatively well known conclusion is that any complex matrix can be similar to a symmetric matrix.
\begin{lemma}[\cite{HR}, Theorem $4.4.9$]\label{11272}
Let $A\in M_{n}({\mathbb{C}})$, then there exists a symmetric matrix $S$ and an invertible matrix $P$ such that $PAP^{-1}=S$ .
\end{lemma}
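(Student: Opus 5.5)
The plan is to pass through the Jordan canonical form and show that each Jordan block, viewed as a complex matrix, is similar to a complex symmetric matrix. Since similarity respects direct sums, it suffices to treat one block. By the Jordan theorem there is an invertible $P_0\in M_n(\mathbb{C})$ with
$$P_0AP_0^{-1}=J_{n_1}(\lambda_1)\oplus\cdots\oplus J_{n_k}(\lambda_k).$$
If each $J_{m}(\lambda)$ satisfies $R_m J_m(\lambda)R_m^{-1}=S_m$ with $S_m$ symmetric, then $R=R_1\oplus\cdots\oplus R_k$ gives $RP_0A(RP_0)^{-1}=S_1\oplus\cdots\oplus S_k$. This matrix is symmetric, so we may take $P=RP_0$.

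For a single block write $J_m(\lambda)=\lambda I+N$, where $N$ is the nilpotent shift with ones on the superdiagonal. Let $E$ be the backward identity (ones on the antidiagonal), and put
$$R=\tfrac{1}{\sqrt2}(I+\boldsymbol{i}E),$$
where $\boldsymbol{i}$ here is the complex imaginary unit. Then:
\begin{enumerate}
\item[(a)] Since $E^2=I$, we get $R\overline{R}=\tfrac12(I+\boldsymbol{i}E)(I-\boldsymbol{i}E)=I$, so $R$ is invertible with $R^{-1}=\tfrac1{\sqrt2}(I-\boldsymbol{i}E)$.
\item[(b)] We have $(EN)^T=N^TE=EN$ and $(NE)^T=EN^T=NE$, because $EN^TE=N$. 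So $EN$ and $NE$ are symmetric Hankel matrices.
\item[(c)] Expanding, $RNR^{-1}=\tfrac12\bigl(N+ENE+\boldsymbol{i}(EN-NE)\bigr)=\tfrac12\bigl(N+N^T\bigr)+\tfrac{\boldsymbol{i}}2(EN-NE)$.
\end{enumerate}
By (b), the right-hand side of (c) is symmetric. Adding $\lambda I$ shows that $RJ_m(\lambda)R^{-1}$ is symmetric.

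The only step requiring care is the computation in (c), namely checking that $ENE=N^T$ and that the cross terms are symmetric. This reduces to the index identity $(ENE)_{pq}=N_{m+1-p,\,m+1-q}$, which is routine. Combined with the block-diagonal assembly above, this proves the lemma.
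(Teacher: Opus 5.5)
Your argument is correct. The identities $R\overline{R}=I$ and $ENE=N^{T}$, and the symmetry of $EN$ and $NE$, all check out, so $RJ_m(\lambda)R^{-1}$ is symmetric and the block-diagonal assembly finishes the proof. The paper gives no proof of its own; it cites Horn--Johnson, Theorem 4.4.9, whose proof is this same reduction to Jordan blocks, each conjugated by the symmetric unitary matrix $\tfrac{1}{\sqrt2}(I+\boldsymbol{i}E)$.
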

As for the  $\boldsymbol{i}$-similarity of quaternion matrices, we have the analogous conclusion.
\begin{theorem}
Every quaternion matrix $A\in M_{n}({\mathbb{H}})$ is $\boldsymbol{i}$-similar to a symmetric matrix.
\end{theorem}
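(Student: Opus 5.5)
The plan is to reduce the statement to ordinary similarity via Lemma \ref{RIST}(2), then use the quaternionic Jordan form together with Lemma \ref{11272}. The key observation is that for $A,B\in M_n(\mathbb{H})$ we have $A\sim_{\boldsymbol{i}}B$ if and only if $\boldsymbol{i}A\sim\boldsymbol{i}B$. So it suffices to find a symmetric $S$ such that $\boldsymbol{i}S$ is similar to $\boldsymbol{i}A$.

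The steps are as follows.

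First, I will invoke the standard fact that every quaternion matrix is similar to a complex matrix. Concretely, $\boldsymbol{i}A$ is similar in $M_n(\mathbb{H})$ to its quaternionic Jordan canonical form $J$, which is a complex upper triangular Jordan matrix whose eigenvalues are standard representatives $\lambda$ with nonnegative imaginary part (Wiegmann; see also Zhang \cite{Z}). Thus $\boldsymbol{i}A\sim C$ for some $C\in M_n(\mathbb{C})$.

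Second, by Lemma \ref{11272}, there is an invertible complex $P$ with $PCP^{-1}=S'$, where $S'\in M_n(\mathbb{C})$ is complex symmetric. Since complex similarity is in particular quaternionic similarity, transitivity gives $\boldsymbol{i}A\sim S'$.

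Third, I set $S=-\boldsymbol{i}S'$. Since $S'$ is complex, the scalar $-\boldsymbol{i}$ commutes with all its entries, so $S\in M_n(\mathbb{C})$. Moreover $S^T=-\boldsymbol{i}S'^T=S$, so $S$ is symmetric. Also $\boldsymbol{i}S=S'$, hence $\boldsymbol{i}A\sim\boldsymbol{i}S$, and Lemma \ref{RIST}(2) yields $A\sim_{\boldsymbol{i}}S$.

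For the last step I should double-check the direction of Lemma \ref{RIST}(2) against Definition \ref{1022}. If $P^{-1}\boldsymbol{i}AP=\boldsymbol{i}S$, then
\[
S=-\boldsymbol{i}P^{-1}\boldsymbol{i}AP=(\overline{P}^{\boldsymbol{i}})^{-1}AP=QA(\overline{Q}^{\boldsymbol{i}})^{-1}
\]
with $Q=P^{-1}$, using Lemma \ref{haoduo}(2). This matches the definition of $\boldsymbol{i}$-similarity.

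The only nontrivial ingredient, and the main obstacle, is the first step: similarity of an arbitrary quaternion matrix to a complex (Jordan) matrix. This result is not stated explicitly in the paper. If one prefers to avoid quoting it, I would derive it from Theorem \ref{RQS}. The complex representation $(\boldsymbol{i}A)_{\mathbb{C}}$ has a Jordan form whose blocks come in conjugate pairs $J_k(\lambda)\oplus J_k(\overline{\lambda})$, because $(\boldsymbol{i}A)_{\mathbb{C}}$ is similar to its complex conjugate via $\begin{pmatrix}0&-I\\ I&0\end{pmatrix}$. Choosing one block from each pair gives a complex $J$ with $J_{\mathbb{C}}=J\oplus\overline{J}\sim(\boldsymbol{i}A)_{\mathbb{C}}$. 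Theorem \ref{RQS} then gives $\boldsymbol{i}A\sim J$.

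The delicate point in this derivation is the pairing of real eigenvalues, which must occur with even multiplicity per block size. I expect this to follow from the same conjugation symmetry.
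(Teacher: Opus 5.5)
Your argument is correct and is essentially the paper's proof: the paper also uses that $\boldsymbol{i}A$ is similar to a complex matrix, applies Lemma~\ref{11272} to obtain a complex symmetric matrix $\boldsymbol{i}T$ (so $T$ is symmetric), and concludes via Lemma~\ref{RIST}(2). Two small remarks, neither affecting the result. First, from $S=(\overline{P}^{\boldsymbol{i}})^{-1}AP$ the matrix realizing Definition~\ref{1022} is $Q=\overline{P^{-1}}^{\boldsymbol{i}}$, not $Q=P^{-1}$. Second, in your optional derivation the even number of Jordan blocks of each size at a real eigenvalue comes from $\Theta\overline{\Theta}=-I$ for $\Theta=\begin{pmatrix}0&-I\\ I&0\end{pmatrix}$, i.e.\ from an antilinear map squaring to $-1$ that commutes with $(\boldsymbol{i}A)_{\mathbb{C}}$; similarity to the complex conjugate alone does not give it.
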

\begin{proof}
 As well known, every quaternion matrix in $M_{n}({\mathbb{H}})$ is similar to a complex matrix in $M_{n}({\mathbb{C}})$, one can refer to \cite{Z}. We assume the quaternion matrix $\boldsymbol{i}A$ is similar to the complex matrix $\boldsymbol{i}C$. By Lemma \ref{11272}, we may let $\boldsymbol{i}C$ is similar to the symmetric complex matrix $\boldsymbol{i}T$. Since $\boldsymbol{i}T=(\boldsymbol{i}T)^T=\boldsymbol{i}T^T$, we have $T=T^T$, which implies that $T$ is also symmetric.
Furthermore, the similarity among $\boldsymbol{i}A$, $\boldsymbol{i}C$ and $\boldsymbol{i}T$ is equivalent to the $\boldsymbol{i}$-similarity among $A$, $C$ and $T$, hence we draw the conclusion.
\end{proof}

\subsection{Unitary equivalence}

Kh. Ikramov provided a necessary and sufficient condition for a complex matrix to be unitarily equivalent to a real matrix in \cite{I}.

\begin{theorem}[\cite{I}]\label{Kh1}
 A matrix $A\in M_{n}(\mathbb{C})$ can be made real by a unitary equivalence transformation if and only if A and $\overline{A}$ are unitarily equivalent and the unitary equivalence between them can be realized via a transformation matrix $P$ that is simultaneously unitary and symmetric.
\end{theorem}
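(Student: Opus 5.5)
Necessity is a direct computation, so I would do it first. Suppose $A = UBU^*$ with $U$ unitary and $B$ real. Then $\overline{B}=B$, and $\overline{AB}=\overline{A}\,\overline{B}$ holds for complex matrices, so
\[
\overline{A}=\overline{U}\,B\,\overline{U}^{*}=\overline{U}U^{*}A U U^{T}=PAP^{*},
\qquad P:=\overline{U}U^{*}.
\]
This $P$ is unitary, being a product of unitaries. It is also symmetric, since $P^T=(U^*)^T\overline{U}^T=\overline{U}U^*=P$. So $A$ and $\overline{A}$ are unitarily equivalent via a symmetric unitary $P$.

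For sufficiency, suppose $\overline{A}=PAP^*$ with $P$ unitary and $P=P^T$. The plan is to produce a unitary $V$ such that $VAV^*$ is real.

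The key tool is a ``square root'' factorization of symmetric unitaries: every symmetric unitary $P$ can be written as $P=\overline{W}W^{*}$ for some unitary $W$. To get it, I would diagonalize $P$ by a real orthogonal matrix. Write $P=X+\mathrm{i}Y$ with $X,Y$ real. Symmetry of $P$ makes $X$ and $Y$ real symmetric. Unitarity gives $PP^*=P\overline{P}=I$, which splits into $X^2+Y^2=I$ and $YX-XY=0$. Hence $X$ and $Y$ are commuting real symmetric matrices, so they are simultaneously orthogonally diagonalizable: $P=O\,\mathrm{diag}(e^{\mathrm{i}\theta_1},\dots,e^{\mathrm{i}\theta_n})O^T$ with $O$ real orthogonal. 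Now put
\[
W=O\,\mathrm{diag}(e^{-\mathrm{i}\theta_k/2})\,O^T .
\]
This $W$ is unitary, and $\overline{W}W^*=O\,\mathrm{diag}(e^{\mathrm{i}\theta_k/2})\,\mathrm{diag}(e^{\mathrm{i}\theta_k/2})\,O^T=P$, as required.

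With this factorization the hypothesis reads $\overline{A}=\overline{W}W^*AW W^T$. Multiplying on the left by $W^T$ and on the right by $\overline{W}$ gives $W^T\overline{A}\,\overline{W}=W^*AW$. The left side is the complex conjugate of $W^*AW$, so $B:=W^*AW$ equals its own conjugate and is therefore real. Taking $V=W^*$ finishes the sufficiency.

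The step I expect to be the main obstacle is the factorization $P=\overline{W}W^*$. The rest is algebra, and getting the real-orthogonal diagonalization (the commuting $X,Y$ argument) right is what makes the construction work.
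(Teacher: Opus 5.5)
Your proof is correct. The paper itself does not prove this statement: it is quoted from \cite{I}, so the natural comparison is with the paper's proof of the quaternion analogue, Theorem \ref{QC1}.

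Your necessity argument is the same as the one there. Their $P=T(\overline{T}^{\boldsymbol{i}})^{-1}=TT^{*\boldsymbol{i}}$ corresponds to your $\overline{U}U^{*}$, up to swapping $P$ and $P^{*}$.

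For sufficiency the paper takes a longer road:
\begin{enumerate}
\item it writes $A=QMQ^{-1}$ with $M$ over the smaller field;
\item it forms $D=Q^{-1}P\overline{Q}^{\boldsymbol{i}}$, which commutes with $M$ and satisfies $D\overline{D}^{\boldsymbol{i}}=I_n$;
\item it uses the Hilbert-90-type Lemma \ref{hahah} to write $D=S(\overline{S}^{\boldsymbol{i}})^{-1}$;
\item only at the end does it compare $P=V(\overline{V}^{\boldsymbol{i}})^{-1}$ with a unitary factorization $P=U(\overline{U}^{\boldsymbol{i}})^{-1}$, concluding that $U^{*}AU=RLR^{-1}$ lies in the smaller field.
\end{enumerate}

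In Theorem \ref{QC1} that unitary factorization is simply part of the hypothesis, whereas Ikramov assumes only that $P$ is symmetric. Your lemma that every symmetric unitary equals $\overline{W}W^{*}$ with $W$ unitary (via simultaneous real orthogonal diagonalization of $\mathrm{Re}\,P$ and $\mathrm{Im}\,P$) is exactly the bridge between the two formulations. Once that factorization is available, your two-line computation gives $\overline{W^{*}AW}=W^{*}AW$ directly, so the $Q$, $D$, $S$ machinery is superfluous. In fact the same two-line argument, with $\boldsymbol{i}$-conjugation in place of complex conjugation, already proves the sufficiency in Theorem \ref{QC1}.

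A paper-style proof in the complex case would also need two further facts: that $A\sim\overline{A}$ forces $A$ to be similar to a real matrix, and the classical result that $D\overline{D}=I$ implies $D=S\overline{S}^{-1}$. Your route avoids both. Together with your necessity computation, it shows that the symmetric unitaries are exactly the matrices $\overline{W}W^{*}$ with $W$ unitary, which is the real content of the theorem.
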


 Further, based on the result above, we want to know under what conditions can a quaternion matrix be unitarily equivalent to a complex matrix, it would be even better if we could obtain the necessary and sufficient conditions for this question. Before presenting the related theorem, we first introduce a lemma that will be used in the proof process.

\begin{lemma}\label{hahah}
Let $D\in M_{n}(\mathbb{H})$, then $D\overline{D}^{\boldsymbol{i}}=I_{n}$ if and only if there exists an invertible matrix
 $S\in M_{n}(\mathbb{H})$ such that $D=S(\overline{S}^{\boldsymbol{i}})^{-1}$.
\end{lemma}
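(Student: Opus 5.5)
The plan is to convert the condition $D\overline{D}^{\boldsymbol{i}}=I_n$ into an ordinary similarity statement about the single matrix $M=D\boldsymbol{i}$, in the spirit of Lemma \ref{RIST}. Expanding the definition, $D\overline{D}^{\boldsymbol{i}}=-D\boldsymbol{i}D\boldsymbol{i}=-(D\boldsymbol{i})^2$. So the hypothesis is equivalent to $M^2=-I_n$.

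On the other side, take any invertible $S$. Then $(\overline{S}^{\boldsymbol{i}})^{-1}=(-\boldsymbol{i}S\boldsymbol{i})^{-1}=-\boldsymbol{i}S^{-1}\boldsymbol{i}$. Hence $D=S(\overline{S}^{\boldsymbol{i}})^{-1}$ is equivalent to $D=-S\boldsymbol{i}S^{-1}\boldsymbol{i}$, that is, to $D\boldsymbol{i}=S(\boldsymbol{i}I_n)S^{-1}$. The lemma therefore reduces to the following claim: $M^2=-I_n$ if and only if $M$ is similar to $\boldsymbol{i}I_n$ in $M_n(\mathbb{H})$.

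The sufficiency direction is immediate. If $M=S(\boldsymbol{i}I_n)S^{-1}$, then $M^2=S(\boldsymbol{i}^2I_n)S^{-1}=-I_n$. Alternatively, one can use parts (1), (2) and (6) of Lemma \ref{haoduo} directly: they give $\overline{D}^{\boldsymbol{i}}=\overline{S}^{\boldsymbol{i}}S^{-1}$, so $D\overline{D}^{\boldsymbol{i}}=S(\overline{S}^{\boldsymbol{i}})^{-1}\overline{S}^{\boldsymbol{i}}S^{-1}=I_n$.

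The necessity direction is the main step. I would pass to the complex representation. Since $(M^2)_{\mathbb{C}}=M_{\mathbb{C}}^2$, the hypothesis gives $M_{\mathbb{C}}^2=-I_{2n}$. The minimal polynomial of $M_{\mathbb{C}}$ then divides $\lambda^2+1$, which has simple roots. So $M_{\mathbb{C}}$ is diagonalizable, with eigenvalues among $\pm\boldsymbol{i}$.

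The point needing care is the multiplicities. For any quaternion matrix $A$, the matrix $A_{\mathbb{C}}$ is similar to its complex conjugate via $\begin{pmatrix}0&-I_n\\ I_n&0\end{pmatrix}$. Hence the non-real eigenvalues of $M_{\mathbb{C}}$ occur in conjugate pairs with equal multiplicity, so $\boldsymbol{i}$ and $-\boldsymbol{i}$ each have multiplicity exactly $n$. It follows that $M_{\mathbb{C}}\sim \boldsymbol{i}I_n\oplus(-\boldsymbol{i}I_n)=(\boldsymbol{i}I_n)_{\mathbb{C}}$.

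Theorem \ref{RQS} then yields $M\sim \boldsymbol{i}I_n$ in $M_n(\mathbb{H})$, say $M=S(\boldsymbol{i}I_n)S^{-1}$. Undoing the reduction gives $D=S(\overline{S}^{\boldsymbol{i}})^{-1}$.

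An alternative route for the last step is the quaternionic Jordan form, which represents $M$ by a complex Jordan matrix with eigenvalues in the closed upper half-plane. There, $M^2=-I$ forces all blocks to be $1\times1$ with eigenvalue $\boldsymbol{i}$.
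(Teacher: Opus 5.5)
Your proof is correct, but it takes a different route from the paper's.

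\textbf{The paper's route.} The paper imitates the classical complex proof that $A\overline{A}=I$ implies $A=S\overline{S}^{-1}$, where one uses $S_\theta=e^{\boldsymbol{i}\theta}A+e^{-\boldsymbol{i}\theta}I$. It sets $S_q=Dq+\overline{q}^{\boldsymbol{i}}I_n$ for $q$ with $q\overline{q}^{\boldsymbol{i}}=1$ and checks directly that $D\overline{S_q}^{\boldsymbol{i}}=S_q$. It then must find some $q_0$ for which $S_{q_0}=[D+(\overline{q_0}^{\boldsymbol{i}})^2I_n]q_0$ is invertible, and it gets this by citing a Ger\v{s}gorin-type theorem of Zhang.

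\textbf{Your route.} You note that $D\overline{D}^{\boldsymbol{i}}=-(D\boldsymbol{i})^2$, and that $D=S(\overline{S}^{\boldsymbol{i}})^{-1}$ is equivalent to $D\boldsymbol{i}=S(\boldsymbol{i}I_n)S^{-1}$. The lemma thus becomes an ordinary similarity statement: every quaternion square root of $-I_n$ is similar to $\boldsymbol{i}I_n$. You settle this with the complex representation. Diagonalizability comes from the minimal polynomial dividing $\lambda^2+1$, and the equal multiplicities come from $A_{\mathbb{C}}\sim\overline{A_{\mathbb{C}}}$. You then finish with Theorem~\ref{RQS}, or alternatively with Lemma~\ref{H}.

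\textbf{What each buys.} The paper's construction gives $S$ as an explicit formula in $D$. Its invertibility step, however, is the delicate point. In the complex case only finitely many $\theta$ make $S_\theta$ singular. Here, singularity of $D+cI_n$ means that $-c$ is a left eigenvalue of $D$, and left spectra can be infinite, so an external input is genuinely needed. Your reduction avoids left eigenvalues entirely and uses only results already quoted in the paper. It is in the spirit of Lemma~\ref{RIST}, and because every step is an equivalence it gives both directions at once. The price is that $S$ is produced only implicitly, through Theorem~\ref{RQS} or the Jordan form, rather than by a closed formula.
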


\begin{proof}
The sufficiency is obvious.

Necessity. Let $\Upsilon=\{\rho\in\mathbb{H};\rho\overline{\rho}^{\boldsymbol{i}}=1\}$, it is evident the set $\Upsilon$ is not empty. Define $S_{q}=Dq+\overline{q}^{\boldsymbol{i}}I_{n}$, where $q\in\Upsilon$.

By the form of $S_{q}$ we obtain $\overline{S_q}^{\boldsymbol{i}}=\overline{D}^{\boldsymbol{i}}\overline{q}^{\boldsymbol{i}}+qI_{n}$, then $D\cdot \overline{S_q}^{\boldsymbol{i}} = S_{q}$. By Theorem $6$ in \cite{Z07}, we may choose a proper $q_{0}\in\Upsilon$ such that the matrix $D+(\overline{q_0}^{\boldsymbol{i}})^{2}\cdot I_n$ is invertible, then $S_{q_{0}}=[D+(\overline{q_0}^{\boldsymbol{i}})^{2}\cdot I_n]q_{0}$ is also invertible and $D=S_{q_{0}}(\overline{S_{q_0}}^{\boldsymbol{i}})^{-1}$, let $S=S_{q_{0}}$, then $D=S(\overline{S}^{\boldsymbol{i}})^{-1}$ as desired.
\end{proof}
We show a necessary and sufficient condition for a quaternion matrix being unitarily equivalent to a complex matrix in the following theorem, in which the unitary quaternion matrix $P$ plays the role of the symmetric unitary complex matrix in Theorem \ref{Kh1}.

\begin{theorem}\label{QC1}
A matrix $A\in M_{n}(\mathbb{H})$ is unitarily equivalent to a complex matrix if and only if A is unitarily equivalent to its $\boldsymbol{i}$-conjugate $\overline{A}^{\boldsymbol{i}}$  by a special unitary matrix P with $P=UU^{*\boldsymbol{i}}$, where $U\in M_{n}(\mathbb{H})$ is a unitary matrix.
\end{theorem}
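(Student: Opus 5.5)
Both directions rest on Lemma \ref{haoduo}(3), which says $\overline{C}^{\boldsymbol{i}}=C$ exactly when $C$ is complex. Sufficiency also needs Lemma \ref{hahah}, upgraded so that the factor it produces is unitary.

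\emph{Necessity.} Suppose $A=VCV^{*}$ with $V$ unitary and $C\in M_n(\mathbb{C})$. By Lemma \ref{haoduo}(6) and (1), $\overline{A}^{\boldsymbol{i}}=\overline{V}^{\boldsymbol{i}}\,C\,(\overline{V}^{\boldsymbol{i}})^{*}$, since $\overline{C}^{\boldsymbol{i}}=C$. Hence
\[
\overline{A}^{\boldsymbol{i}}=\overline{V}^{\boldsymbol{i}}V^{*}\,A\,V(\overline{V}^{\boldsymbol{i}})^{*}=PAP^{*},
\qquad P:=\overline{V}^{\boldsymbol{i}}V^{*}.
\]
Set $U=\overline{V}^{\boldsymbol{i}}$, which is unitary. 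Using $(\overline{V}^{\boldsymbol{i}})^{*\boldsymbol{i}}=\overline{(\overline{V}^{\boldsymbol{i}})^*}^{\boldsymbol{i}}=V^{*}$ from Lemma \ref{haoduo}(1), we get $U^{*\boldsymbol{i}}=V^{*}$. So $P=UU^{*\boldsymbol{i}}$, as required.

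\emph{Sufficiency.} Suppose $PAP^{*}=\overline{A}^{\boldsymbol{i}}$ with $P=UU^{*\boldsymbol{i}}$ and $U$ unitary. Since $U^{*\boldsymbol{i}}=\overline{U^{*}}^{\boldsymbol{i}}=(\overline{U}^{\boldsymbol{i}})^{-1}$, we can write $P=U(\overline{U}^{\boldsymbol{i}})^{-1}$. Set $C=U^{*}AU$. Then $\overline{C}^{\boldsymbol{i}}=(\overline{U}^{\boldsymbol{i}})^{*}\,\overline{A}^{\boldsymbol{i}}\,\overline{U}^{\boldsymbol{i}}$. Substituting $\overline{A}^{\boldsymbol{i}}=PAP^{*}$ and $P^{*}=\overline{U}^{\boldsymbol{i}}U^{*}$ (valid because $\overline{U}^{\boldsymbol{i}}$ is unitary) gives
\[
\overline{C}^{\boldsymbol{i}}=(\overline{U}^{\boldsymbol{i}})^{*}\,U(\overline{U}^{\boldsymbol{i}})^{*}\,A\,\overline{U}^{\boldsymbol{i}}U^{*}\,\overline{U}^{\boldsymbol{i}}.
\]
This expression does not collapse to $U^{*}AU$ in general, so the naive choice of $C$ fails. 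The problem is a left/right ordering mismatch, so one should choose $C$ with the factors placed correctly. Taking $C=W^{*}AW$ and requiring $\overline{C}^{\boldsymbol{i}}=C$ leads to the condition $\overline{A}^{\boldsymbol{i}}=\overline{W}^{\boldsymbol{i}}W^{*}\,A\,W(\overline{W}^{\boldsymbol{i}})^{*}$. Thus it suffices to write $P=\overline{W}^{\boldsymbol{i}}W^{*}$ with $W$ unitary. With $W=\overline{U^{*}}^{\boldsymbol{i}}$, this requirement reads $P=U^{*}\,\overline{U}^{\boldsymbol{i}}$. So the real task is to convert the given factorization $P=UU^{*\boldsymbol{i}}$ into one of the form $\overline{W}^{\boldsymbol{i}}W^{*}$, or equivalently to reorder the product appropriately.

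\emph{Main obstacle.} The key step, which I expect to be hardest, is a unitary version of Lemma \ref{hahah}. First I would check that $P$ satisfies the hypothesis of that lemma in the right form: $P\overline{P}^{\boldsymbol{i}}=I$, or $P^{*}\overline{P^{*}}^{\boldsymbol{i}}=I$. I would then redo the construction in that lemma, $S_q=Pq+\overline{q}^{\boldsymbol{i}}I$ with $q$ a unit quaternion, in the form adapted to $P^{*}$. Since $P$ is unitary and $P\overline{P}^{\boldsymbol{i}}=I$, the matrix $P$ commutes with $\overline{P}^{\boldsymbol{i}}$. This should make $S_q$ a normal-type matrix whose polar factor $W$ still satisfies $P^{*}=W(\overline{W}^{\boldsymbol{i}})^{-1}$.

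To see this, write $S_q=WH$ with $H$ positive definite. Then show that $H$ commutes appropriately with $\overline{\,\cdot\,}^{\boldsymbol{i}}$, so that $\overline{H}^{\boldsymbol{i}}=H$ and $H$ is complex. To justify $\overline{H}^{\boldsymbol{i}}=H$, I would pass to complex representations via Lemma \ref{haoduo}(12), where it becomes a uniqueness-of-polar-decomposition argument.

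Once $P$ (or $P^{*}$) is written as $W(\overline{W}^{\boldsymbol{i}})^{-1}$ with $W$ unitary, the computation above gives $\overline{C}^{\boldsymbol{i}}=C$ for the appropriate conjugate $C$ of $A$ by $W$. Hence $C$ is complex by Lemma \ref{haoduo}(3), and $A$ is unitarily equivalent to $C$.
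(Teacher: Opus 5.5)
Your necessity argument is correct and matches the paper's. You write $\overline{A}^{\boldsymbol{i}}=PAP^{*}$ where the paper writes $P^{*}AP$. This is harmless, since $(UU^{*\boldsymbol{i}})^{*}=\overline{U}^{\boldsymbol{i}}U^{*}=WW^{*\boldsymbol{i}}$ with $W=\overline{U}^{\boldsymbol{i}}$ unitary.

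Sufficiency, however, is not proved as written. The step you single out as decisive is producing a unitary $W$ with $P=\overline{W}^{\boldsymbol{i}}W^{*}$, and it is only a plan:
\begin{itemize}
\item normality of $S_q$ is asserted with ``should'';
\item you never say how to pick $q$ so that $S_q$ is invertible. This is not automatic: for unitary $P$ the scalar $\overline{q}^{\boldsymbol{i}}q^{-1}$ has modulus one, like every left eigenvalue of $P$;
\item $\overline{H}^{\boldsymbol{i}}=H$ is deferred to an unspecified uniqueness argument.
\end{itemize}
The detour itself comes from choosing the wrong $W$. Your choice $W=\overline{U^{*}}^{\boldsymbol{i}}$ gives $U^{*}\overline{U}^{\boldsymbol{i}}$, which is indeed not $P$.

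Take instead $W=\overline{U}^{\boldsymbol{i}}$, which is unitary. Then $\overline{W}^{\boldsymbol{i}}=U$ and $W^{*}=\overline{U^{*}}^{\boldsymbol{i}}=U^{*\boldsymbol{i}}$, so $\overline{W}^{\boldsymbol{i}}W^{*}=UU^{*\boldsymbol{i}}=P$ exactly. Your own reduction then finishes the proof: for $C=W^{*}AW=U^{*\boldsymbol{i}}A\overline{U}^{\boldsymbol{i}}$,
\[
\overline{C}^{\boldsymbol{i}}=U^{*}\,\overline{A}^{\boldsymbol{i}}\,U
=U^{*}\bigl(UU^{*\boldsymbol{i}}\bigr)A\bigl(\overline{U}^{\boldsymbol{i}}U^{*}\bigr)U
=U^{*\boldsymbol{i}}A\overline{U}^{\boldsymbol{i}}=C,
\]
so $C$ is complex by Lemma \ref{haoduo}(3), and $A=WCW^{*}$. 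The factorization of $P$ is part of the hypothesis, so there is no reordering problem and no need for a unitary version of Lemma \ref{hahah}.

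For comparison, the paper proves sufficiency by writing $A=QMQ^{-1}$ with $M$ complex and applying Lemma \ref{hahah} to $D=Q^{-1}P\overline{Q}^{\boldsymbol{i}}$. It arrives at $U^{*}AU=RLR^{-1}$ complex. In its $P^{*}AP$ convention, that conclusion also follows in one line from $\overline{U^{*}AU}^{\boldsymbol{i}}=\overline{U^{*}}^{\boldsymbol{i}}P^{*}AP\,\overline{U}^{\boldsymbol{i}}=U^{*}AU$.
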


\begin{proof}
First, let us prove the necessity.

Assume that the complex matrix $B=T^{\ast}AT$, where $T$ is a unitary matrix. Then $\overline{B}^{\boldsymbol{i}}=\overline{T^*}^{\boldsymbol{i}}\overline{A}^{\boldsymbol{i}}\overline{T}^{\boldsymbol{i}}=B$ and
$$
\overline{A}^{\boldsymbol{i}} = (\overline{T^*}^{\boldsymbol{i}})^{-1}T^{\ast}AT(\overline{T}^{\boldsymbol{i}})^{-1}=P^{\ast}AP,
$$
where $P=T(\overline{T}^{\boldsymbol{i}})^{-1}=TT^{*\boldsymbol{i}}$ is unitary.

Now, let us prove the Sufficiency.

Let $\overline{A}^{\boldsymbol{i}}=P^{\ast}AP$ for a unitary matrix $P$ and $P=UU^{*\boldsymbol{i}}$, where $U$ is unitary. Since every quaternion matrix is similar to a complex matrix, $A$ can be written as $A=QMQ^{-1}$, where $M$ is complex. Hence
$$
\overline{A}^{\boldsymbol{i}}=\overline{Q}^{\boldsymbol{i}} M \overline{Q^{-1}}^{\boldsymbol{i}} = P^{\ast}QMQ^{-1}P,
$$
and
$$
(Q^{-1}P\overline{Q}^{\boldsymbol{i}})M=M(Q^{-1}P\overline{Q}^{\boldsymbol{i}}).
$$
Put $D=Q^{-1}P\overline{Q}^{\boldsymbol{i}}$. Then $DM=MD$.
Observe that
\begin{equation}\label{11273}
D\overline{D}^{\boldsymbol{i}}=(Q^{-1}P \overline{Q}^{\boldsymbol{i}}) (\overline{Q^{-1}}^{\boldsymbol{i}} \overline{P}^{\boldsymbol{i}}Q)=Q^{-1}(P\overline{P}^{\boldsymbol{i}})Q=I_n.
\end{equation}
The last equality in (\ref{11273}) is explained by $P\overline{P}^{\boldsymbol{i}}=UU^{*\boldsymbol{i}}\overline{U}^{\boldsymbol{i}}U^{*}=I_{n}$.

According to Lemma \ref{hahah}, $D$ can be represented as $D=S({\overline{S}^{\boldsymbol{i}}})^{-1}$, thus
$$
S({\overline{S}^{\boldsymbol{i}}})^{-1}M=MS({\overline{S}^{\boldsymbol{i}}})^{-1},
$$
or equivalently
$$
({\overline{S}^{\boldsymbol{i}}})^{-1}M\overline{S}^{\boldsymbol{i}}=S^{-1}MS.
$$
Let $L=S^{-1}MS$, then $L$ is complex.
Now we have
$$
P=QD({\overline{Q}^{\boldsymbol{i}}})^{-1} = QS({\overline{S}^{\boldsymbol{i}}})^{-1}({\overline{Q}^{\boldsymbol{i}}})^{-1}
=V({\overline{V}^{\boldsymbol{i}}})^{-1},
$$
where $V=QS$.
We have $V({\overline{V}^{\boldsymbol{i}}})^{-1}=U({\overline{U}^{\boldsymbol{i}}})^{-1}$,
and
$$
U^{\ast}V=U^{-1}V=({\overline{U}^{\boldsymbol{i}}})^{-1}\overline{V}^{\boldsymbol{i}},
$$
then $R=U^{\ast}V$ is a complex matrix.
Finally,
$$
A=QMQ^{-1}=VS^{-1}MSV^{-1}=VLV^{-1}=U(RLR^{-1})U^{\ast}.
$$
The matrix $RLR^{-1}$ is complex because both $R$ and $L$ are complex. While $U$ is unitary, thus $A$ is unitarily equivalent to a complex matrix.
\end{proof}

\section{i-eigenvalue and i-Jordan canonical form}
In this section, we will discuss the $\boldsymbol{i}$-eigenvalues and $\boldsymbol{i}$-Jordan canonical form of a quaternion matrix $A \in M_n(\mathbb{H})$.
\subsection{i-eigenvalue}

\begin{definition}
Let $A$ be a matrix in $M_n(\mathbb{H})$. If $A\overline{\boldsymbol{x}}^{\boldsymbol{i}}=\omega\boldsymbol{x}$ holds for a quaternion $\omega$ and a nonzero vector $\boldsymbol{x}\in\mathbb{H}^n$, then we say $\omega$ is a left $\boldsymbol{i}$-eigenvalue of $A$ and $\boldsymbol{x}$ is a left $\boldsymbol{i}$-eigenvector of $A$ with respect to $\omega$. Similarly, if $A\overline{\boldsymbol{y}}^{\boldsymbol{i}}=\boldsymbol{y}\lambda$ holds for a quaternion $\lambda$ and a nonzero vector $\boldsymbol{y}\in\mathbb{H}^n$, then we say $\lambda$ is a right $\boldsymbol{i}$-eigenvalue of $A$ and $\boldsymbol{y}$ is a right $\boldsymbol{i}$-eigenvector of $A$ with respect to $\lambda$.
\end{definition}

From the definition of right$\backslash$left $\boldsymbol{i}$-eigenvalues we realize that a quaternion $\lambda$ is a right(left) $\boldsymbol{i}$-eigenvalue of $A$ if and only if $\overline{\lambda}^{\boldsymbol{i}}$ is a right$\backslash$left $\boldsymbol{i}$-eigenvalue of $\overline{A}^{\boldsymbol{i}}$.

\begin{proposition}\label{p3.3}
Let $A$ be in $M_n(\mathbb{H})$ and $\lambda$ is a right $\boldsymbol{i}$-eigenvalue of $A$. Then all the quaternions $\boldsymbol{i}$-similar to $\lambda$ are right $\boldsymbol{i}$-eigenvalues of $A$.
\end{proposition}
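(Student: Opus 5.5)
Suppose $A\overline{\boldsymbol{y}}^{\boldsymbol{i}}=\boldsymbol{y}\lambda$ with $\boldsymbol{y}\neq 0$, and let $\mu$ be $\boldsymbol{i}$-similar to $\lambda$. By Definition \ref{1022}, applied with $n=1$, there is a nonzero quaternion $p$ such that $\mu=p\lambda(\overline{p}^{\boldsymbol{i}})^{-1}$. The plan is to find an explicit right $\boldsymbol{i}$-eigenvector for $\mu$ by multiplying $\boldsymbol{y}$ on the right by a suitable scalar. The natural candidate is $\boldsymbol{z}=\boldsymbol{y}p^{-1}$, which is nonzero because $p\neq 0$.

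We compute $A\overline{\boldsymbol{z}}^{\boldsymbol{i}}$. By Lemma \ref{haoduo}(6), applied to the product of the $n\times 1$ matrix $\boldsymbol{y}$ and the $1\times 1$ matrix $p^{-1}$, we have $\overline{\boldsymbol{y}p^{-1}}^{\boldsymbol{i}}=\overline{\boldsymbol{y}}^{\boldsymbol{i}}\,\overline{p^{-1}}^{\boldsymbol{i}}$. This also follows directly, since $-\boldsymbol{i}\boldsymbol{y}p^{-1}\boldsymbol{i}=(-\boldsymbol{i}\boldsymbol{y}\boldsymbol{i})(-\boldsymbol{i}p^{-1}\boldsymbol{i})$ because $\boldsymbol{i}^{2}=-1$. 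By Lemma \ref{haoduo}(2), $\overline{p^{-1}}^{\boldsymbol{i}}=(\overline{p}^{\boldsymbol{i}})^{-1}$. Because $A$ acts on the left and scalars act on the right, associativity then gives
$$A\overline{\boldsymbol{z}}^{\boldsymbol{i}}=(A\overline{\boldsymbol{y}}^{\boldsymbol{i}})(\overline{p}^{\boldsymbol{i}})^{-1}=\boldsymbol{y}\lambda(\overline{p}^{\boldsymbol{i}})^{-1}=\boldsymbol{y}p^{-1}\bigl(p\lambda(\overline{p}^{\boldsymbol{i}})^{-1}\bigr)=\boldsymbol{z}\mu.$$
Hence $\mu$ is a right $\boldsymbol{i}$-eigenvalue of $A$, with right $\boldsymbol{i}$-eigenvector $\boldsymbol{z}$.

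The argument is short, and the only real obstacle is bookkeeping in the noncommutative setting. One has to make sure the $\boldsymbol{i}$-conjugate of a vector times a scalar splits as a product in the right order, which is exactly the content of Lemma \ref{haoduo}(6). One also has to make sure the scalar $p^{-1}$ is inserted on the correct side, so that $p^{-1}p$ cancels and leaves $p\lambda(\overline{p}^{\boldsymbol{i}})^{-1}$ intact.
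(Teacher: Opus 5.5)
Your proposal is correct and is essentially the paper's proof: right-multiply the $\boldsymbol{i}$-eigenvector by $p^{-1}$ and regroup $\boldsymbol{y}\lambda(\overline{p}^{\boldsymbol{i}})^{-1}=\boldsymbol{y}p^{-1}\bigl(p\lambda(\overline{p}^{\boldsymbol{i}})^{-1}\bigr)$. You also verify explicitly the identity $\overline{\boldsymbol{y}p^{-1}}^{\boldsymbol{i}}=\overline{\boldsymbol{y}}^{\boldsymbol{i}}(\overline{p}^{\boldsymbol{i}})^{-1}$, which the paper uses without comment.
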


  \begin{proof}
    Let $\boldsymbol{x}$ be a right $\boldsymbol{i}$-eigenvector of $A$ with respect to the right $\boldsymbol{i}$-eigenvalue $\lambda$, then we have
    $A\overline{\boldsymbol{x}}^{\boldsymbol{i}}=\boldsymbol{x}\lambda$. For every nonzero $p\in\mathbb{H}$, we have
    $$A\overline{\boldsymbol{x}}^{\boldsymbol{i}}(\overline{p}^{\boldsymbol{i}})^{-1}=
    \boldsymbol{x}\lambda (\overline{p}^{\boldsymbol{i}})^{-1}=
    \boldsymbol{x}p^{-1}(p\lambda (\overline{p}^{\boldsymbol{i}})^{-1}),$$
    and we can know that $p\lambda (\overline{p}^{\boldsymbol{i}})^{-1}$ is the right $\boldsymbol{i}$-eigenvalue of $A$.
  \end{proof}

\begin{remark}
The conclusion in Proposition \ref{p3.3} does not hold for the left $\boldsymbol{i}$-eigenvalues of nonzero quaternion matrices.
\end{remark}

\begin{proposition}
Let $A,B$ be two $\boldsymbol{i}$-similar matrices in $M_n(\mathbb{H})$. Then $A$ and $B$ have the same right
$\boldsymbol{i}$-eigenvalues.
\end{proposition}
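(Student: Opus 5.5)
We argue directly from the definitions. Suppose $A\sim_{\boldsymbol{i}}B$, so there is an invertible $P\in M_n(\mathbb{H})$ with $B=PA(\overline{P}^{\boldsymbol{i}})^{-1}$. Let $\lambda$ be a right $\boldsymbol{i}$-eigenvalue of $A$ with right $\boldsymbol{i}$-eigenvector $\boldsymbol{x}\neq 0$, so that $A\overline{\boldsymbol{x}}^{\boldsymbol{i}}=\boldsymbol{x}\lambda$. We will exhibit a right $\boldsymbol{i}$-eigenvector of $B$ for the same $\lambda$. The natural candidate is $\boldsymbol{y}=P\boldsymbol{x}$, which is nonzero because $P$ is invertible.

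The key computation uses Lemma \ref{haoduo}(4) (or (6) applied to an $n\times 1$ matrix), which gives $\overline{P\boldsymbol{x}}^{\boldsymbol{i}}=\overline{P}^{\boldsymbol{i}}\,\overline{\boldsymbol{x}}^{\boldsymbol{i}}$. Then
$$B\overline{\boldsymbol{y}}^{\boldsymbol{i}}=PA(\overline{P}^{\boldsymbol{i}})^{-1}\overline{P}^{\boldsymbol{i}}\,\overline{\boldsymbol{x}}^{\boldsymbol{i}}=PA\overline{\boldsymbol{x}}^{\boldsymbol{i}}=P\boldsymbol{x}\lambda=\boldsymbol{y}\lambda.$$
Hence $\lambda$ is a right $\boldsymbol{i}$-eigenvalue of $B$. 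Since $\boldsymbol{i}$-similarity is an equivalence relation, we also have $A=P^{-1}B\,\overline{P}^{\boldsymbol{i}}=P^{-1}B(\overline{P^{-1}}^{\boldsymbol{i}})^{-1}$ by Lemma \ref{haoduo}(2). The same argument with $P^{-1}$ in place of $P$ therefore gives the reverse inclusion, and the two sets of right $\boldsymbol{i}$-eigenvalues coincide.

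There is no real obstacle here. The only point needing care is that the scalar $\lambda$ multiplies on the right, so left-multiplying by the matrix $P$ does not disturb it. This is exactly why the statement holds for right $\boldsymbol{i}$-eigenvalues and, as the preceding remark indicates, fails for left ones. We also need the multiplicativity of the $\boldsymbol{i}$-conjugate from Lemma \ref{haoduo}, which holds because $\overline{X}^{\boldsymbol{i}}=-\boldsymbol{i}X\boldsymbol{i}$ is conjugation by the scalar $\boldsymbol{i}$.
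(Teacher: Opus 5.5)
Your proof is correct and is essentially the paper's argument. The paper writes the $\boldsymbol{i}$-similarity as $\overline{P}^{\boldsymbol{i}}AP^{-1}=B$ and uses the eigenvector $\overline{P}^{\boldsymbol{i}}\boldsymbol{x}$, which becomes your $P\boldsymbol{x}$ after renaming $P\mapsto\overline{P}^{\boldsymbol{i}}$; your explicit check of the reverse inclusion via $A=P^{-1}B(\overline{P^{-1}}^{\boldsymbol{i}})^{-1}$ fills in the symmetry step that the paper leaves implicit.
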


    \begin{proof}
    Since $A \sim_{\boldsymbol{i}} B$, then there exists an invertible matrix $P \in M_n(\mathbb{H})$ such that $\overline{P}^{\boldsymbol{i}}AP^{-1}=B$. Assume $\lambda$ is a right $\boldsymbol{i}$-eigenvalue of $A$ and $\boldsymbol{x}$ is the corresponding right $\boldsymbol{i}$-eigenvector of $A$, then
    $$
    A \overline{\boldsymbol{x}}^{\boldsymbol{i}} = \boldsymbol{x} \lambda \Leftrightarrow \overline{P}^{\boldsymbol{i}}A \overline{\boldsymbol{x}}^{\boldsymbol{i}} = \overline{P}^{\boldsymbol{i}}\boldsymbol{x} \lambda \Leftrightarrow BP\overline{\boldsymbol{x}}^{\boldsymbol{i}}=\overline{P}^{\boldsymbol{i}}\boldsymbol{x} \lambda,
    $$
which shows that $\lambda$ is the right $\boldsymbol{i}$-eigenvalue of $B$ and $\overline{P}^{\boldsymbol{i}}\boldsymbol{x}$ is the corresponded right $\boldsymbol{i}$-eigenvector of $B$, hence $A$ and $B$ have the same right $\boldsymbol{i}$-eigenvalues.
    \end{proof}

\begin{example}
Like the right eigenvalues of a quaternion matrix $A$, having the same right $\boldsymbol{i}$-eigenvalues is a necessary but not sufficient condition for $\boldsymbol{i}$-similarity.

Let $A=
\begin{pmatrix}
0 & 1 \\
0 & 0 \\
\end{pmatrix}$, $B=
\begin{pmatrix}
0 & 0 \\
0 & 0 \\
\end{pmatrix}$, then $\lambda=0$ is the only right $\boldsymbol{i}$-eigenvalue of both $A$ and $B$. However, $A$ is not $\boldsymbol{i}$-similar to $B$ because if we assume $P\in M_{2}(\mathbb{H})$ such that $\overline{P}^{\boldsymbol{i}}A=BP$, then a simple calculation implies that $P$ is not invertible, hence $A$ and $B$ are not $\boldsymbol{i}$-similar in $M_{2}(\mathbb{H})$.
\end{example}

\begin{lemma}\label{miemie}
Let $A\in M_{n}(\mathbb{H})$ and $\lambda\in \mathbb{H}$. Then the following statements hold.

(1) $\lambda$ is a right $\boldsymbol{i}$-eigenvalue of $A$ if and only if $\boldsymbol{i}\lambda$ is a right eigenvalue of $\boldsymbol{i}A$ $\Leftrightarrow$ $\lambda\boldsymbol{i}$ is a right eigenvalue of $A\boldsymbol{i}$.

(2) $\lambda$ is a left $\boldsymbol{i}$-eigenvalue of $A$ if and only if $\boldsymbol{i}\lambda$ is a left $\boldsymbol{i}$-eigenvalue of $\boldsymbol{i}A$ $\Leftrightarrow$ $\lambda\boldsymbol{i}$ is a left $\boldsymbol{i}$-eigenvalue of $A\boldsymbol{i}$.
\end{lemma}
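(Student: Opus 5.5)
The plan is to unwind the definitions directly, using that $\boldsymbol{x}\mapsto\overline{\boldsymbol{x}}^{\boldsymbol{i}}=-\boldsymbol{i}\boldsymbol{x}\boldsymbol{i}$ is an involution (Lemma \ref{haoduo}(1)). It is also multiplicative on scalar–vector products (Lemma \ref{haoduo}(4),(6)), and $\overline{\boldsymbol{i}}^{\boldsymbol{i}}=\boldsymbol{i}$.

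\emph{Part (1).} Suppose $A\overline{\boldsymbol{x}}^{\boldsymbol{i}}=\boldsymbol{x}\lambda$ with $\boldsymbol{x}\neq 0$. Put $\boldsymbol{y}=\overline{\boldsymbol{x}}^{\boldsymbol{i}}\neq 0$, so that $\boldsymbol{x}=-\boldsymbol{i}\boldsymbol{y}\boldsymbol{i}$. The relation becomes $A\boldsymbol{y}=-\boldsymbol{i}\boldsymbol{y}\boldsymbol{i}\lambda$. Left-multiplying by $\boldsymbol{i}$ gives
$$(\boldsymbol{i}A)\boldsymbol{y}=\boldsymbol{y}(\boldsymbol{i}\lambda).$$
Hence $\boldsymbol{i}\lambda$ is a right eigenvalue of $\boldsymbol{i}A$. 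Each step is reversible: given $\boldsymbol{i}A\boldsymbol{y}=\boldsymbol{y}\boldsymbol{i}\lambda$, set $\boldsymbol{x}=\overline{\boldsymbol{y}}^{\boldsymbol{i}}$ and left-multiply by $-\boldsymbol{i}$. This proves the first equivalence.

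For the second equivalence, use the matrix $Q=\boldsymbol{i}I_n$ from the proof of Lemma \ref{RIST}, which satisfies $Q^{-1}(\boldsymbol{i}A)Q=A\boldsymbol{i}$. If $\boldsymbol{i}A\boldsymbol{y}=\boldsymbol{y}\mu$, then
$$A\boldsymbol{i}(Q^{-1}\boldsymbol{y})=(Q^{-1}\boldsymbol{y})\mu,$$
so $\mu=\boldsymbol{i}\lambda$ is a right eigenvalue of $A\boldsymbol{i}$. Right eigenvalues are closed under quaternion similarity: if $B\boldsymbol{z}=\boldsymbol{z}\mu$, then $B(\boldsymbol{z}q)=(\boldsymbol{z}q)(q^{-1}\mu q)$. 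Taking $q=\boldsymbol{i}$ gives $\boldsymbol{i}^{-1}(\boldsymbol{i}\lambda)\boldsymbol{i}=\lambda\boldsymbol{i}$. The converse direction is symmetric, using $Q$ in the other direction and $q=\boldsymbol{i}^{-1}$.

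\emph{Part (2).} Suppose $A\overline{\boldsymbol{x}}^{\boldsymbol{i}}=\omega\boldsymbol{x}$. Left-multiplying by $\boldsymbol{i}$ gives $(\boldsymbol{i}A)\overline{\boldsymbol{x}}^{\boldsymbol{i}}=(\boldsymbol{i}\omega)\boldsymbol{x}$ with the same eigenvector, and this is clearly reversible.

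For $A\boldsymbol{i}$, take $\boldsymbol{x}'=-\boldsymbol{i}\boldsymbol{x}$. Then $\overline{\boldsymbol{x}'}^{\boldsymbol{i}}=-\boldsymbol{i}\,\overline{\boldsymbol{x}}^{\boldsymbol{i}}$, since $\overline{\boldsymbol{i}}^{\boldsymbol{i}}=\boldsymbol{i}$. Therefore
$$(A\boldsymbol{i})\overline{\boldsymbol{x}'}^{\boldsymbol{i}}=A\overline{\boldsymbol{x}}^{\boldsymbol{i}}=\omega\boldsymbol{x}=(\omega\boldsymbol{i})(-\boldsymbol{i}\boldsymbol{x})=(\omega\boldsymbol{i})\boldsymbol{x}'.$$
So $\omega\boldsymbol{i}$ is a left $\boldsymbol{i}$-eigenvalue of $A\boldsymbol{i}$, with eigenvector $\boldsymbol{x}'\neq 0$. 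The converse follows by setting $\boldsymbol{x}=\boldsymbol{i}\boldsymbol{x}'$.

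The main obstacle is the second equivalence in (1). There the eigenvalue produced by the similarity $\boldsymbol{i}A\sim A\boldsymbol{i}$ is $\boldsymbol{i}\lambda$ rather than $\lambda\boldsymbol{i}$, so one must also invoke the similarity-invariance of right eigenvalues. Beyond that, only care with the order of noncommuting factors is needed.
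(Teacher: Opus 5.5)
Your proof is correct and is the direct unwinding of the definitions that the paper leaves to the reader (its proof only says ``The proof is simple''). One small remark: the second equivalence in (1) does not need the detour through $Q=\boldsymbol{i}I_n$ and similarity invariance of right eigenvalues. Writing $A\overline{\boldsymbol{x}}^{\boldsymbol{i}}=\boldsymbol{x}\lambda$ as $-(A\boldsymbol{i})\boldsymbol{x}\boldsymbol{i}=\boldsymbol{x}\lambda$ and right-multiplying by $\boldsymbol{i}$ gives $(A\boldsymbol{i})\boldsymbol{x}=\boldsymbol{x}(\lambda\boldsymbol{i})$ with the same vector $\boldsymbol{x}$, so the ``main obstacle'' you flag is not really one.
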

\begin{proof}
The proof is simple. Moreover, it should be noted that if $A$ is a matrix in $M_{n}(\mathbb{H})$, then the right $\boldsymbol{i}$-eigenvectors of $A$ with respect to non $\boldsymbol{i}$-similar right $\boldsymbol{i}$-eigenvalues are right linearly independent and  the right $\boldsymbol{i}$-eigenvalues $A$ have at most n $\boldsymbol{i}$-similar classes. As for the proof of above truth, one can refer to \cite{H}.
\end{proof}

Equations of the type $ax+b=xc$ have been considered in \cite{RE1944}. It is shown there
if $a, b, c$ are quaternions and  $a$ is not similar to $c$, then $ax+b=xc$ has a
solution in $\mathbb{H}$.

\begin{lemma}\label{815}
Let $a, b, c$ be quaternions. If $a$ is not $\boldsymbol{i}$-similar to $c$, then the equation $ax+b=\overline{x}^{\boldsymbol{i}}c$ has a solution in $\mathbb{H}$.
\end{lemma}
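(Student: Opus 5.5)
The plan is to turn the $\boldsymbol{i}$-conjugate equation into an ordinary Sylvester-type quaternion equation $a'x+b'=xc'$. Then the result of \cite{RE1944} quoted just before the statement applies directly.

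\emph{Step 1 (rewrite the equation).} By Definition \ref{11}, $\overline{x}^{\boldsymbol{i}}=-\boldsymbol{i}x\boldsymbol{i}$. So the equation reads
$$ax+b=-\boldsymbol{i}x\boldsymbol{i}c.$$
Multiplying on the left by $\boldsymbol{i}$ and using $\boldsymbol{i}(-\boldsymbol{i})=1$, this is equivalent to
$$(\boldsymbol{i}a)x+\boldsymbol{i}b=x(\boldsymbol{i}c).$$
The manipulation is reversible, since left multiplication by $\boldsymbol{i}$ is invertible. Hence $x$ solves the original equation if and only if it solves $a'x+b'=xc'$ with $a'=\boldsymbol{i}a$, $b'=\boldsymbol{i}b$, $c'=\boldsymbol{i}c$.

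\emph{Step 2 (transfer the hypothesis).} Apply Lemma \ref{RIST}(2) with $n=1$, reading $a,c$ as $1\times 1$ matrices. It gives $a\sim_{\boldsymbol{i}}c$ if and only if $\boldsymbol{i}a\sim\boldsymbol{i}c$. Since $a$ is not $\boldsymbol{i}$-similar to $c$, the quaternion $a'=\boldsymbol{i}a$ is not similar to $c'=\boldsymbol{i}c$.

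\emph{Step 3 (conclude).} By the result of \cite{RE1944}, $a'x+b'=xc'$ has a solution $x\in\mathbb{H}$ whenever $a'$ is not similar to $c'$. By Step 1, that $x$ solves $ax+b=\overline{x}^{\boldsymbol{i}}c$.

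If one prefers a self-contained version of Step 3, the map $\phi(x)=a'x-xc'$ is $\mathbb{R}$-linear on $\mathbb{H}\cong\mathbb{R}^4$. Suppose $\phi(x)=0$ with $x\neq0$. Then $a'=xc'x^{-1}$, so $a'\sim c'$. Hence $\phi$ is injective when $a'\not\sim c'$, therefore surjective, and $\phi(x)=-b'$ is solvable.

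There is no real obstacle. The only point needing care is the sign and side bookkeeping in Step 1, namely that left multiplication by $\boldsymbol{i}$ cleanly removes the outer $-\boldsymbol{i}$, and invoking Lemma \ref{RIST}(2) in the scalar case.
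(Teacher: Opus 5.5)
Your proof is correct and follows the paper's argument: left-multiplying by $\boldsymbol{i}$ turns the equation into $(\boldsymbol{i}a)x+\boldsymbol{i}b=x(\boldsymbol{i}c)$, Lemma~\ref{RIST}(2) converts $a\not\sim_{\boldsymbol{i}}c$ into $\boldsymbol{i}a\not\sim\boldsymbol{i}c$, and Johnson's result then gives the solution. Your optional self-contained version of the last step is also valid: the $\mathbb{R}$-linear map $x\mapsto a'x-xc'$ on $\mathbb{H}\cong\mathbb{R}^4$ is injective, hence surjective, and this can replace the citation.
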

\begin{proof}
Notice that $a\sim_{\boldsymbol{i}}c$ if and only if $\boldsymbol{i}a\sim\boldsymbol{i}c$. Assume $\boldsymbol{i}a$ is not similar to $\boldsymbol{i}c$, then the equation
\begin{equation}\label{12261}
(\boldsymbol{i}a)x+d=x(\boldsymbol{i}c)
\end{equation}
has a solution $x_{0}$ for any quaternion constant d. Let $d=\boldsymbol{i}b$, then $d$ is a quaternion constant and equation (\ref{12261}) becomes
\begin{equation}\label{12262}
(\boldsymbol{i}a)x+\boldsymbol{i}b=x(\boldsymbol{i}c)
\end{equation}
Substituting $x_{0}$ into (\ref{12262}), we obtain $(\boldsymbol{i}a)x_{0}+\boldsymbol{i}b=x_{0}(\boldsymbol{i}c)$,
which is equivalent to  $ax_{0}+b=\overline{x_0}^{\boldsymbol{i}}c$. Therefore, $x_{0}$ is a quaternion solution of the equation $ax+b=\overline{x}^{\boldsymbol{i}}c$.
\end{proof}

For a quaternion matrix $A$, if $A$ is in triangular form, then every diagonal element is a right eigenvalue of $A$. In fact, if $A$ is in triangular form, then every diagonal element is a right $\boldsymbol{i}$-eigenvalue of $A$. The proof is analogous to Theorem $11$ in \cite{B} and Lemma \ref{815} may be effective for the proof.

\begin{proposition}
Let $A$ be a quaternion matrix of order $n$. If $A$ is of upper(lower) triangular form, then all the right $\boldsymbol{i}$-eigenvalues of $A$ are exactly the diagonal elements and all the quaternions $\boldsymbol{i}$-similar to them.
\end{proposition}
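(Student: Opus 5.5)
We work with an upper triangular $A=(a_{st})$; the lower triangular case follows symmetrically, or by applying $*\boldsymbol{i}$, which reverses the order of products by Lemma~\ref{haoduo}. There are two inclusions to prove. First, every diagonal entry $a_{tt}$ is a right $\boldsymbol{i}$-eigenvalue of $A$; by Proposition~\ref{p3.3}, every quaternion $\boldsymbol{i}$-similar to it is then one as well. Second, any right $\boldsymbol{i}$-eigenvalue $\lambda$ of $A$ is $\boldsymbol{i}$-similar to some $a_{tt}$.

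For the second inclusion, let $A\overline{\boldsymbol{y}}^{\boldsymbol{i}}=\boldsymbol{y}\lambda$ with $\boldsymbol{y}\neq0$, and let $t$ be the largest index with $y_t\neq 0$. Reading off row $t$ and using triangularity gives $a_{tt}\overline{y_t}^{\boldsymbol{i}}=y_t\lambda$. Since $\overline{y_t}^{\boldsymbol{i}}=-\boldsymbol{i}y_t\boldsymbol{i}$ is invertible, this says $y_t^{-1}a_{tt}\overline{y_t}^{\boldsymbol{i}}=\lambda$. Setting $p=y_t^{-1}$, we have $\overline{p}^{\boldsymbol{i}}=(\overline{y_t}^{\boldsymbol{i}})^{-1}$ by Lemma~\ref{haoduo}, so $\lambda=p\,a_{tt}(\overline{p}^{\boldsymbol{i}})^{-1}$, i.e.\ $\lambda\sim_{\boldsymbol{i}}a_{tt}$.

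For the first inclusion we construct an $\boldsymbol{i}$-eigenvector for $a_{tt}$ by back-substitution, imitating Theorem 11 of \cite{B}. Fix $t$. Choose the smallest index $m\le t$ such that $a_{mm}\sim_{\boldsymbol{i}}a_{tt}$; we will produce an eigenvector with respect to $a_{mm}$, which suffices by Proposition~\ref{p3.3}. Set $y_s=0$ for $s>m$ and $y_m=1$. Row $m$ then reads $a_{mm}\cdot 1=1\cdot a_{mm}$, since $\overline{1}^{\boldsymbol{i}}=1$, so it holds. For $s=m-1,m-2,\dots,1$ in turn, row $s$ requires
$$a_{ss}\overline{y_s}^{\boldsymbol{i}}+\sum_{r=s+1}^{m}a_{sr}\overline{y_r}^{\boldsymbol{i}}=y_s\,a_{mm},$$
where the sum is already known; call it $b$. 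Applying the $\boldsymbol{i}$-conjugate (an involutive automorphism by Lemma~\ref{haoduo}) and writing $x=\overline{y_s}^{\boldsymbol{i}}$, this becomes $a_{ss}x+b=\overline{x}^{\boldsymbol{i}}a_{mm}$. By the minimality of $m$, $a_{ss}\not\sim_{\boldsymbol{i}}a_{mm}$, and since $\boldsymbol{i}$-similarity is an equivalence relation this is exactly the hypothesis of Lemma~\ref{815}. Hence a solution $x$ exists, and we put $y_s=\overline{x}^{\boldsymbol{i}}$. The resulting $\boldsymbol{y}$ is nonzero because $y_m=1$.

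The main obstacle is this back-substitution step: getting the conjugation placements right so that the equation for each row matches the form $ax+b=\overline{x}^{\boldsymbol{i}}c$ of Lemma~\ref{815}, and noticing that one must pick $m$ minimal within the $\boldsymbol{i}$-similarity class so that the non-$\boldsymbol{i}$-similarity hypothesis holds at every earlier row. Everything else is routine.
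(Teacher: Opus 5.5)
Your proof is correct and is essentially the argument the paper has in mind: the paper only remarks that the proof is analogous to Theorem~11 of \cite{B} with Lemma~\ref{815} doing the work. Your proof fills this in correctly with the last-nonzero-coordinate argument, the back-substitution through Lemma~\ref{815}, the choice of $m$ minimal in the $\boldsymbol{i}$-similarity class of $a_{tt}$, and the appeal to Proposition~\ref{p3.3}.

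The one thing to drop is the alternative reduction of the lower triangular case via $*\boldsymbol{i}$. It turns $A\overline{\boldsymbol{y}}^{\boldsymbol{i}}=\boldsymbol{y}\lambda$ into the row equation $\boldsymbol{y}^{*}A^{*\boldsymbol{i}}=\lambda^{*\boldsymbol{i}}\boldsymbol{y}^{*\boldsymbol{i}}$, not into a right $\boldsymbol{i}$-eigenvector equation for $A^{*\boldsymbol{i}}$, so it does not transfer directly. Use instead the symmetric argument (first nonzero coordinate, maximal $m$, forward substitution), or conjugate by the real reversal permutation, which is an $\boldsymbol{i}$-similarity.
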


\subsection{i-Jordan canonical form}
The Jordan canonical form of a matrix plays a significant role in matrix operations. To a large extent, it simplifies the operation and analysis of matrices, especially in dealing with non-diagonalizable matrices, solving linear differential equations and control theory. Many mathematicians have also conducted research on the Jordan canonical form of quaternion matrices and drawn a series of conclusions. The Jordan canonical form of a quaternion matrix is as follows.
\begin{lemma}[\cite{H}]\label{H}
Let $A$ be in $M_n(\mathbb{H})$. Then
$$A \sim J_A:= J_{n_1}(\lambda_1) \oplus J_{n_2}(\lambda_2) \oplus \cdots \oplus J_{n_r}(\lambda_r),$$
where $\lambda_k=a_k+b_k\boldsymbol{i}\in\mathbb{C}$ are right eigenvalues of $A(k=1, \cdots, r)$ with $a_k, \ b_k\in\mathbb{R}$ may be chosen so that $b_k\ge0$. $J_A$ is uniquely determined by $A$ up to the order of Jordan blocks $J_{n_k}(\lambda_k)$.
\end{lemma}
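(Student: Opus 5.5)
The plan is to pass to the complex representation and apply the classical complex Jordan theorem there. By the fact already used in the paper (every quaternion matrix is similar to a complex matrix, \cite{Z}), I will reduce existence to the complex case. Uniqueness will follow from Theorem \ref{RQS} together with the structure of $A_{\mathbb{C}}$.

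\emph{Existence.} First take $M\in M_n(\mathbb{C})$ with $A\sim M$. Then apply the complex Jordan theorem to $M$, so that $M=PJP^{-1}$ with $P\in M_n(\mathbb{C})$ and $J=\bigoplus J_{n_k}(\mu_k)$, $\mu_k\in\mathbb{C}$. For any block with $\mathrm{Im}\,\mu_k<0$, conjugate that block by $\boldsymbol{j}I_{n_k}$. Since $\boldsymbol{j}z=\overline{z}\boldsymbol{j}$ for $z\in\mathbb{C}$, this gives
\[
\boldsymbol{j}J_{n_k}(\mu_k)\boldsymbol{j}^{-1}=J_{n_k}(\overline{\mu_k}).
\]
The superdiagonal ones are real and so are unchanged. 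The result is a block diagonal quaternion similarity producing $J_A$ with $b_k\ge 0$. Each $\lambda_k$ is a right eigenvalue of $J_A$, using the first standard basis vector of its block. Since right eigenvalues are similarity invariants, $\lambda_k$ is also a right eigenvalue of $A$.

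\emph{Uniqueness.} Suppose $A\sim J$ and $A\sim J'$, both of the stated form. Then $J\sim J'$, so by Theorem \ref{RQS} we have $J_{\mathbb{C}}\sim J'_{\mathbb{C}}$ over $\mathbb{C}$. For a complex matrix $J$ we have $J=J_1$ and $J_2=0$, so $J_{\mathbb{C}}=J\oplus\overline{J}$. This is the complex Jordan matrix $\bigoplus_k \bigl(J_{n_k}(\lambda_k)\oplus J_{n_k}(\overline{\lambda_k})\bigr)$. By uniqueness of the complex Jordan form, $J_{\mathbb{C}}$ and $J'_{\mathbb{C}}$ have the same multiset of blocks. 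It remains to recover the multiset $\{(n_k,\lambda_k)\}$ with $\mathrm{Im}\,\lambda_k\ge0$ from the multiset of blocks of $J\oplus\overline{J}$:
\begin{itemize}
\item \textbf{Non-real eigenvalues.} For $\mathrm{Im}\,\lambda>0$, the blocks $J_m(\lambda)$ in $J_{\mathbb{C}}$ come exactly from the blocks $J_m(\lambda)$ of $J$. The conjugate copies have negative imaginary part and so cannot contribute.
\item \textbf{Real eigenvalues.} For real $\lambda$, each block of $J$ appears twice in $J_{\mathbb{C}}$. Halving the multiplicities recovers the blocks of $J$.
\end{itemize}
Hence $J$ and $J'$ agree up to the order of blocks.

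The main obstacle is this final bookkeeping step: checking that $J\oplus\overline{J}$ determines $J$. It relies on the normalization $b_k\ge0$, and the real eigenvalues need separate care because of the doubling.
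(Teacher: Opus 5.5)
Your argument is correct. The paper gives no proof of this lemma; it simply imports it from Huang \cite{H}. What you have is therefore a derivation from two facts the paper uses elsewhere. The first is that every quaternion matrix is similar to a complex matrix, invoked in the proof of the symmetric-form theorem and of Theorem \ref{QC1}. The second is the multiplicativity of $A\mapsto A_{\mathbb{C}}$, which is only the easy direction of Theorem \ref{RQS}.

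Your uniqueness half is a genuine proof. The matrix $J_{\mathbb{C}}=J\oplus\overline{J}$ is already a complex Jordan matrix, and you recover the blocks of $J$ from those of $J\oplus\overline{J}$ by counting blocks with positive imaginary part once and halving the counts for real eigenvalues. This is exactly where the normalization $b_k\ge 0$ enters, and you carry it out correctly.

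Your existence half, by contrast, is a reduction rather than an independent proof. Given the complex Jordan theorem, the statement ``$A$ is similar to a complex matrix'' is equivalent to the existence of $J_A$. Your only added ingredient is the normalization $\boldsymbol{j}J_m(\mu)\boldsymbol{j}^{-1}=J_m(\overline{\mu})$. That is acceptable here because the paper treats the cited fact as known, but you should not present it as a self-contained existence argument.

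A self-contained argument would work in $\mathbb{C}^{2n}$ with the antilinear map $\sigma(v)=K\overline{v}$, where $K=\begin{pmatrix}0&-I_n\\ I_n&0\end{pmatrix}$. This map satisfies $\sigma^2=-1$ and commutes with $A_{\mathbb{C}}$. You would need an $A_{\mathbb{C}}$-invariant subspace $W$ with $W\oplus\sigma(W)=\mathbb{C}^{2n}$. A basis $w_1,\dots,w_n$ of $W$ then gives a quaternion matrix $P$ with $P_{\mathbb{C}}=(w_1,\dots,w_n,\sigma w_1,\dots,\sigma w_n)$, and $P^{-1}AP$ is complex. Finding $W$ is immediate on the generalized eigenspaces of eigenvalues with positive imaginary part. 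The real work is choosing $\sigma$-compatible Jordan chains on the generalized eigenspaces of real eigenvalues.
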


Similarly, we obtain the Jordan canonical form of a quaternion matrix in the $\boldsymbol{i}$-similar sense.
\begin{theorem}\label{ISJCF}
Let $A\in M_n(\mathbb{H})$. Then
$$A \sim_{\boldsymbol{i}} J_A^{\boldsymbol{i}}:= J_{n_1}(\lambda_1) \oplus J_{n_2}(\lambda_2) \oplus \cdots \oplus J_{n_r}(\lambda_r),$$
where $\lambda_{k}=-a_k+b_k\boldsymbol{i}$ are complex right $\boldsymbol{i}$-eigenvalues of $A$ where $a_k,b_k\in\mathbb{R}$ may be chosen so that $a_k\ge0$. We call $J_A^{\boldsymbol{i}}$ the $\boldsymbol{i}$-Jordan canonical form of $A$ and $J_{n_k}(\lambda_k)$ the $\boldsymbol{i}$-Jordan blocks, hence $J_A^{\boldsymbol{i}}$ is uniquely determined by $A$ up to the order of $\boldsymbol{i}$-Jordan blocks $J_{n_k}(\lambda_k)$.

\end{theorem}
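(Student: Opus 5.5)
The plan is to reduce everything to ordinary similarity by means of Lemma \ref{RIST}(2), which says $A\sim_{\boldsymbol{i}}B \Leftrightarrow \boldsymbol{i}A\sim\boldsymbol{i}B$, and then to invoke the quaternion Jordan form of Lemma \ref{H}.

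\emph{Step 1: existence.} Apply Lemma \ref{H} to the quaternion matrix $\boldsymbol{i}A$. This gives
$$\boldsymbol{i}A\sim J_{n_1}(\mu_1)\oplus\cdots\oplus J_{n_r}(\mu_r),\qquad \mu_k\in\mathbb{C}.$$
Define $\lambda_k:=-\boldsymbol{i}\mu_k\in\mathbb{C}$, so that $\boldsymbol{i}\lambda_k=\mu_k$. The block $\boldsymbol{i}J_{n}(\lambda)$ is a complex matrix with $\boldsymbol{i}\lambda$ on the diagonal and $\boldsymbol{i}$ on the superdiagonal. Conjugating by the complex diagonal matrix $D=\mathrm{diag}(1,\boldsymbol{i},\boldsymbol{i}^2,\dots,\boldsymbol{i}^{n-1})$ rescales the superdiagonal entries to $1$, so $\boldsymbol{i}J_n(\lambda)\sim J_n(\boldsymbol{i}\lambda)$. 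Taking direct sums, $\boldsymbol{i}J^{\boldsymbol{i}}_A\sim \bigoplus_k J_{n_k}(\mu_k)\sim \boldsymbol{i}A$, and Lemma \ref{RIST}(2) then yields $A\sim_{\boldsymbol{i}}J^{\boldsymbol{i}}_A$.

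\emph{Step 2: normalization.} Next I will arrange the sign convention. For a complex $\mu$, the blocks $J_n(\mu)$ and $J_n(\overline{\mu})$ are similar over $\mathbb{H}$: conjugation by $\boldsymbol{j}I_n$ sends a complex matrix $C$ to $\overline{C}$. Hence each $\mu_k$ may be replaced by its complex conjugate, and correspondingly $\lambda_k$ by $-\boldsymbol{i}\overline{\mu_k}$. Writing $\lambda_k=-a_k+b_k\boldsymbol{i}$ gives $\mu_k=\boldsymbol{i}\lambda_k=-b_k-a_k\boldsymbol{i}$. The freedom to conjugate $\mu_k$ therefore flips the sign of $a_k$, so we may assume $a_k\ge0$. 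That the $\lambda_k$ are right $\boldsymbol{i}$-eigenvalues of $A$ follows from Lemma \ref{miemie}(1): $\boldsymbol{i}\lambda_k=\mu_k$ is a right eigenvalue of $\boldsymbol{i}A$ (a diagonal entry of its Jordan form). Equivalently, it follows directly from the triangular-form proposition applied to $J^{\boldsymbol{i}}_A$, together with invariance of right $\boldsymbol{i}$-eigenvalues under $\boldsymbol{i}$-similarity.

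\emph{Step 3: uniqueness.} Suppose $A\sim_{\boldsymbol{i}}J$ and $A\sim_{\boldsymbol{i}}J'$, where both are of the stated normalized form. Then $J\sim_{\boldsymbol{i}}J'$, so $\boldsymbol{i}J\sim\boldsymbol{i}J'$. By Step 1 these are similar, respectively, to $\bigoplus J_{n_k}(\boldsymbol{i}\lambda_k)$ and $\bigoplus J_{n'_k}(\boldsymbol{i}\lambda'_k)$. After conjugating each eigenvalue into the half-plane prescribed by Lemma \ref{H} (imaginary part $\ge0$), the uniqueness clause of Lemma \ref{H} forces the multisets of blocks to agree. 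I expect the main obstacle to be the bookkeeping in this step. One must check that the normalization $a_k\ge0$ corresponds bijectively to the normalization $b\ge0$ in Lemma \ref{H}. For $\mu=-b-a\boldsymbol{i}$, the conjugate $\overline{\mu}=-b+a\boldsymbol{i}$ has nonnegative imaginary part exactly when $a\ge 0$, so $\lambda\mapsto\overline{\boldsymbol{i}\lambda}$ is a bijection between the two normal forms. Hence distinct normalized $\boldsymbol{i}$-Jordan forms give distinct normalized Jordan forms of $\boldsymbol{i}A$, which proves uniqueness up to the order of the blocks.
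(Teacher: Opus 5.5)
Your proposal is correct and follows essentially the same route as the paper. Like the paper, you apply Lemma~\ref{H} to $\boldsymbol{i}A$ and relate its Jordan blocks to $\boldsymbol{i}J_{n}(\lambda)$ by a diagonal rescaling with $\mathrm{diag}(1,\boldsymbol{i},\dots,\boldsymbol{i}^{n-1})$ plus a similarity that conjugates the eigenvalue. You then pass back to $\boldsymbol{i}$-similarity via Lemma~\ref{RIST}(2) and get the eigenvalue claim from Lemma~\ref{miemie} and uniqueness from Lemma~\ref{H} through the bijection $\lambda\mapsto\overline{\boldsymbol{i}\lambda}$ between the two normalizations. The only differences are minor: the paper merges the rescaling and the flip into the single matrix $S_m=(\boldsymbol{j}+\boldsymbol{k})D_m$ where you use $D$ and $\boldsymbol{j}I_n$ separately, and your uniqueness step, which first reduces $J\sim_{\boldsymbol{i}}J'$ to $\boldsymbol{i}J\sim\boldsymbol{i}J'$, is spelled out more carefully than the paper's.
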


    \begin{proof}
By Lemma \ref{H}, there exists an invertible matrix
$P\in M_n(\mathbb{H})$ such that
$$
 P^{-1}(\boldsymbol{i}A)P
 =J_{\boldsymbol{i}A}
 =\bigoplus_{\ell=1}^{r}
   J_{n_\ell}(-b_\ell+a_\ell\boldsymbol{i}),
 \qquad
 a_\ell\geq 0,\quad b_\ell\in\mathbb R.
$$
Set
$$
 \lambda_\ell=-a_\ell+b_\ell\boldsymbol{i},
 \qquad
 J_A^{(\boldsymbol{i})}
 =\bigoplus_{\ell=1}^{r}J_{n_\ell}(\lambda_\ell).
$$

We now correct the transformation of the superdiagonal entries.
For each positive integer $m$, let
\[
 N_m=J_m(0),\qquad
 q=\boldsymbol{j}+\boldsymbol{k},\qquad
 Q_m=qI_m,
\]
and define
\[
 D_m=\operatorname{diag}
 \bigl(1,\boldsymbol{i},\boldsymbol{i}^{\,2},
       \ldots,\boldsymbol{i}^{\,m-1}\bigr).
\]
Since
\[
 q^{-1}\boldsymbol{i}q=-\boldsymbol{i}
\]
and
\[
 D_m^{-1}N_mD_m=\boldsymbol{i}N_m,
\]
setting $S_m=Q_mD_m$ gives
\begin{align*}
 S_m^{-1}J_m(-b+a\boldsymbol{i})S_m
 &=D_m^{-1}
   \bigl((-b-a\boldsymbol{i})I_m+N_m\bigr)D_m\\
 &=(-b-a\boldsymbol{i})I_m+\boldsymbol{i}N_m\\
 &=\boldsymbol{i}
   \bigl((-a+b\boldsymbol{i})I_m+N_m\bigr)\\
 &=\boldsymbol{i}J_m(-a+b\boldsymbol{i}).
\end{align*}

Let
\[
 S=\bigoplus_{\ell=1}^{r}S_{n_\ell},
 \qquad
 R=PS.
\]
It follows that
\[
 R^{-1}(\boldsymbol{i}A)R
 =S^{-1}J_{\boldsymbol{i}A}S
 =\boldsymbol{i}J_A^{(\boldsymbol{i})}.
\]
By Lemma \ref{RIST}, this is equivalent to
\[
 \bigl(R^{\boldsymbol{i}}\bigr)^{-1}AR
 =J_A^{(\boldsymbol{i})},
\]
where
\[
 R^{\boldsymbol{i}}
 =-\boldsymbol{i}R\boldsymbol{i}.
\]
Therefore,
\[
 A\sim_{\boldsymbol{i}}J_A^{(\boldsymbol{i})}.
\]

It remains to verify the statement concerning the right
$\boldsymbol{i}$-eigenvalues. Since
\[
 q^{-1}(-b_\ell+a_\ell\boldsymbol{i})q
 =-b_\ell-a_\ell\boldsymbol{i}
 =\boldsymbol{i}\lambda_\ell,
\]
the quaternion $\boldsymbol{i}\lambda_\ell$ is a right eigenvalue
of $\boldsymbol{i}A$. Hence, by Lemma \ref{miemie}, $\lambda_\ell$ is a
right $\boldsymbol{i}$-eigenvalue of $A$.

Finally, the Jordan blocks of $J_{\boldsymbol{i}A}$ are uniquely
determined up to their order by Lemma \ref{H}. The correspondence
\[
 (n_\ell,-b_\ell+a_\ell\boldsymbol{i})
 \longmapsto
 (n_\ell,-a_\ell+b_\ell\boldsymbol{i})
\]
is one-to-one. Consequently, $J_A^{(\boldsymbol{i})}$ is uniquely
determined up to the order of its Jordan blocks.
\end{proof}
    
Following from Theorem \ref{ISJCF}, one can see that if we assume $A$ is in $M_n(\mathbb{H})$, $A$ is $\boldsymbol{i}$-similar to a complex matrix and the following are equivalent.

(1) The $\boldsymbol{i}$-Jordan canonical form of $A$ is
$$J_A^{\boldsymbol{i}} = J_{n_1}(-a_{1}+b_{1}\boldsymbol{i}) \oplus J_{n_2}(-a_{2}+b_{2}\boldsymbol{i}) \oplus \cdots \oplus J_{n_r}(-a_{r}+b_{r}\boldsymbol{i}),$$
where $a_{k}, b_{k}\in \mathbb{R}$ and $a_{k}\geq 0, k=1,\cdots, r$.

(2) The Jordan canonical form of ${\boldsymbol{i}}A$ under similarity is
$$J_{\boldsymbol{i}A} = J_{n_1}(-b_{1}+a_{1}\boldsymbol{i}) \oplus J_{n_2}(-b_{2}+a_{2}\boldsymbol{i}) \oplus \cdots \oplus J_{n_r}(-b_{r}+a_{r}\boldsymbol{i}),$$
where $a_{k}, b_{k}\in \mathbb{R}$ and $a_{k}\geq 0, k=1,\cdots, r$.

\begin{corollary}\label{ISJCF2}
Let $A,B$ be two  matrices in $M_n(\mathbb{H})$, then
\begin{enumerate}
\item $A \overline{A}^{\boldsymbol{i}}\sim \overline{A}^{\boldsymbol{i}}A$.
\item $A \sim_{\boldsymbol{i}} B$ if and only if $A$ and $B$ have the same $\boldsymbol{i}$-Jordan canonical form.
\end{enumerate}
\end{corollary}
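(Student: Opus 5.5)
For part (1), I will reduce to ordinary similarity. Using $\overline{A}^{\boldsymbol{i}}=-\boldsymbol{i}A\boldsymbol{i}$, a direct computation gives
\[
A\overline{A}^{\boldsymbol{i}}=-A\boldsymbol{i}A\boldsymbol{i}=-(A\boldsymbol{i})^2 ,
\qquad
\overline{A}^{\boldsymbol{i}}A=-\boldsymbol{i}A\boldsymbol{i}A=-(\boldsymbol{i}A)^2 .
\]
By Lemma \ref{RIST}(2), applied with $B=A$ (and $A\sim_{\boldsymbol{i}}A$ trivially), we have $\boldsymbol{i}A\sim A\boldsymbol{i}$. Explicitly, with $Q=\boldsymbol{i}I_n$ one has $Q^{-1}(\boldsymbol{i}A)Q=A\boldsymbol{i}$. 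Conjugating the square by the same $Q$ gives $Q^{-1}(\boldsymbol{i}A)^2Q=(A\boldsymbol{i})^2$, and hence $-(\boldsymbol{i}A)^2\sim-(A\boldsymbol{i})^2$. This is exactly $\overline{A}^{\boldsymbol{i}}A\sim A\overline{A}^{\boldsymbol{i}}$. Alternatively, the same conclusion follows from Corollary \ref{11241}(3) with $p(t)=-t^2$, once it is combined with $A\boldsymbol{i}\sim\boldsymbol{i}A$. The only point to check is that conjugating by the scalar-matrix $\boldsymbol{i}I_n$ is compatible with products, which is immediate.

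For part (2), I plan to use Theorem \ref{ISJCF}, proving each direction separately. For sufficiency, suppose $A$ and $B$ have the same $\boldsymbol{i}$-Jordan form $J$. Then $A\sim_{\boldsymbol{i}}J$ and $B\sim_{\boldsymbol{i}}J$, and since $\boldsymbol{i}$-similarity is an equivalence relation (noted after Definition \ref{1022}), symmetry and transitivity give $A\sim_{\boldsymbol{i}}B$. For necessity, suppose $A\sim_{\boldsymbol{i}}B$. By Lemma \ref{RIST}(2) this gives $\boldsymbol{i}A\sim\boldsymbol{i}B$, so by Lemma \ref{H} the matrices $\boldsymbol{i}A$ and $\boldsymbol{i}B$ have the same Jordan form $J_{\boldsymbol{i}A}=J_{\boldsymbol{i}B}$, up to block order.

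The main obstacle is the uniqueness step in the necessity direction. In the proof of Theorem \ref{ISJCF}, the $\boldsymbol{i}$-Jordan form of $A$ is produced from $J_{\boldsymbol{i}A}$ by the bijective correspondence $(n_\ell,-b_\ell+a_\ell\boldsymbol{i})\mapsto(n_\ell,-a_\ell+b_\ell\boldsymbol{i})$. So equal Jordan forms of $\boldsymbol{i}A$ and $\boldsymbol{i}B$ yield equal $\boldsymbol{i}$-Jordan forms of $A$ and $B$. I will be explicit here that the normalization $a_\ell\ge 0$ makes the correspondence well defined. This uses that the normalization of Lemma \ref{H} (imaginary part $\ge0$) matches, which it does by construction.
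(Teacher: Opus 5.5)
Your proposal is correct. For part (2) you follow the paper's own reasoning: sufficiency by transitivity of $\boldsymbol{i}$-similarity, and necessity by passing to $\boldsymbol{i}A\sim\boldsymbol{i}B$ via Lemma \ref{RIST}(2), using the uniqueness in Lemma \ref{H}, and transporting through the bijection $(n_\ell,-b_\ell+a_\ell\boldsymbol{i})\mapsto(n_\ell,-a_\ell+b_\ell\boldsymbol{i})$. This is exactly the uniqueness argument at the end of the proof of Theorem \ref{ISJCF}, which the paper simply cites as a straightforward consequence.

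For part (1) you take a different route.

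\textbf{The paper's route.} It invokes Theorem \ref{ISJCF} to write $A=\overline{P}^{\boldsymbol{i}}J_A^{\boldsymbol{i}}P^{-1}$. Since $J_A^{\boldsymbol{i}}$ is complex, $\overline{J_A^{\boldsymbol{i}}}^{\boldsymbol{i}}=J_A^{\boldsymbol{i}}$. Hence $A\overline{A}^{\boldsymbol{i}}=\overline{P}^{\boldsymbol{i}}(J_A^{\boldsymbol{i}})^2(\overline{P}^{\boldsymbol{i}})^{-1}$ and $\overline{A}^{\boldsymbol{i}}A=P(J_A^{\boldsymbol{i}})^2P^{-1}$.

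\textbf{Your route.} You use the identities $A\overline{A}^{\boldsymbol{i}}=-(A\boldsymbol{i})^2$ and $\overline{A}^{\boldsymbol{i}}A=-(\boldsymbol{i}A)^2$, then conjugate by $\boldsymbol{i}I_n$. This is entirely elementary and needs no Jordan theory, so it does not depend on the Wolf/Huang machinery behind Lemma \ref{H}. It also proves more: since $\boldsymbol{i}I_n$ is unitary, you get $A\overline{A}^{\boldsymbol{i}}\sim_u\overline{A}^{\boldsymbol{i}}A$. Equivalently, your argument is Lemma \ref{RIST}(1) applied to $M=\overline{A}^{\boldsymbol{i}}A$, because $\overline{M}^{\boldsymbol{i}}=A\overline{A}^{\boldsymbol{i}}$ by Lemma \ref{haoduo}(1),(6).

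\textbf{What each buys.} The paper's route identifies the common similarity class explicitly as that of the complex matrix $(J_A^{\boldsymbol{i}})^2$. So it also tells you the Jordan structure of $A\overline{A}^{\boldsymbol{i}}$ in terms of the $\boldsymbol{i}$-Jordan blocks of $A$, with eigenvalues $\lambda_k^2$.

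Your alternative via Corollary \ref{11241}(3) is redundant. Once $\boldsymbol{i}A\sim A\boldsymbol{i}$ is known, $p(\boldsymbol{i}A)\sim p(A\boldsymbol{i})$ is immediate for any polynomial, so you can drop that remark.
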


    \begin{proof}
    \begin{enumerate}
   \item  By Theorem \ref{ISJCF}, let $A=\overline{P}^{\boldsymbol{i}}J_A^{\boldsymbol{i}}P^{-1}$, then $\overline{A}^{\boldsymbol{i}}=P\overline{J_A^{\boldsymbol{i}}}^{\boldsymbol{i}}\overline{P^{-1}}^{\boldsymbol{i}}$.
     we obtain $A\overline{A}^{\boldsymbol{i}} \sim J_A^{\boldsymbol{i}}\overline{J_A^{\boldsymbol{i}}}^{\boldsymbol{i}}$ and $\overline{A}^{\boldsymbol{i}}A \sim \overline{J_A^{\boldsymbol{i}}}^{\boldsymbol{i}}J_A^{\boldsymbol{i}}$. Since $J_A^{\boldsymbol{i}}\overline{J_A^{\boldsymbol{i}}}^{\boldsymbol{i}}=\overline{J_A^{\boldsymbol{i}}}^{\boldsymbol{i}}J_A^{\boldsymbol{i}}
    =(J_A^{\boldsymbol{i}})^2$, we have $A\overline{A}^{\boldsymbol{i}} \sim \overline{A}^{\boldsymbol{i}} A$.
    \item The conclusion $2$ is a straightforward corollary of Theorem \ref{ISJCF}.
\end{enumerate}
    \end{proof}
By Theorem \ref{ISJCF} and Corollary \ref{ISJCF2}, we give an affirmative answer to Kaplansky's test problem in the sense of $\boldsymbol{i}$-similar equivalence.

\begin{corollary}
Let $A, B, C$ be in $M_n(\mathbb{H})$, then

\begin{enumerate}
\item $\begin{pmatrix}
A & 0 \\
0 & B
\end{pmatrix}
\sim_{\boldsymbol{i}}
\begin{pmatrix}
A & 0 \\
0 & C
\end{pmatrix}$ if and only if $B \sim_{\boldsymbol{i}} C$,

\item $\begin{pmatrix}
B & 0 \\
0 & B
\end{pmatrix}
\sim_{\boldsymbol{i}}
\begin{pmatrix}
C & 0 \\
0 & C
\end{pmatrix}$ if and only if $B \sim_{\boldsymbol{i}} C$.
\end{enumerate}
\end{corollary}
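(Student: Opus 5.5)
The plan is to reduce both statements to the uniqueness of the $\boldsymbol{i}$-Jordan canonical form, Corollary \ref{ISJCF2}(2). For both parts the ``if'' direction is immediate. If $C=PB(\overline{P}^{\boldsymbol{i}})^{-1}$, then the block-diagonal matrix $\widehat P=I_n\oplus P$ satisfies $\overline{\widehat P}^{\boldsymbol{i}}=I_n\oplus\overline{P}^{\boldsymbol{i}}$. This holds because $\boldsymbol{i}$-conjugation acts entrywise, as $-\boldsymbol{i}q\boldsymbol{i}$. Hence $\widehat P\,(A\oplus B)\,(\overline{\widehat P}^{\boldsymbol{i}})^{-1}=A\oplus C$. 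For part (2) we use $P\oplus P$ in the same way.

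For the ``only if'' direction of (1), I would first observe that the $\boldsymbol{i}$-Jordan form of a direct sum is the direct sum of the $\boldsymbol{i}$-Jordan forms. Indeed, if $A\sim_{\boldsymbol{i}}J_A^{\boldsymbol{i}}$ via $P_1$ and $B\sim_{\boldsymbol{i}}J_B^{\boldsymbol{i}}$ via $P_2$, then $A\oplus B\sim_{\boldsymbol{i}}J_A^{\boldsymbol{i}}\oplus J_B^{\boldsymbol{i}}$ via $P_1\oplus P_2$. The right-hand side is a direct sum of blocks $J_{n_k}(-a_k+b_k\boldsymbol{i})$ with $a_k\ge 0$, so it has the normalized shape of Theorem \ref{ISJCF}. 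By the uniqueness clause of Theorem \ref{ISJCF}, it is therefore \emph{the} $\boldsymbol{i}$-Jordan form of $A\oplus B$, up to the order of blocks.

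Now suppose $A\oplus B\sim_{\boldsymbol{i}}A\oplus C$. Corollary \ref{ISJCF2}(2) gives that $J_A^{\boldsymbol{i}}\oplus J_B^{\boldsymbol{i}}$ and $J_A^{\boldsymbol{i}}\oplus J_C^{\boldsymbol{i}}$ coincide as multisets of $\boldsymbol{i}$-Jordan blocks. Cancelling the blocks of $J_A^{\boldsymbol{i}}$ from both multisets shows that $J_B^{\boldsymbol{i}}$ and $J_C^{\boldsymbol{i}}$ have the same blocks. Another application of Corollary \ref{ISJCF2}(2) then yields $B\sim_{\boldsymbol{i}}C$. Part (2) is handled the same way: the multiset of blocks of $J_B^{\boldsymbol{i}}\oplus J_B^{\boldsymbol{i}}$ is the multiset of $J_B^{\boldsymbol{i}}$ with every multiplicity doubled, and likewise for $C$. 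Equality of the doubled multisets forces equality of the original multisets, since halving the multiplicities recovers them. Hence $J_B^{\boldsymbol{i}}=J_C^{\boldsymbol{i}}$ up to order, and $B\sim_{\boldsymbol{i}}C$.

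The main point requiring care is the claim that the $\boldsymbol{i}$-Jordan form is additive under direct sums, together with a precise reading of the uniqueness statement as equality of multisets of pairs $(n_k,\lambda_k)$ with $\lambda_k=-a_k+b_k\boldsymbol{i}$ and $a_k\ge 0$. If the uniqueness in Theorem \ref{ISJCF} seems too loosely stated, I would fall back on Lemma \ref{RIST}(2). That lemma turns $\boldsymbol{i}$-similarity of $X$ and $Y$ into ordinary similarity of $\boldsymbol{i}X$ and $\boldsymbol{i}Y$, with $\boldsymbol{i}(X\oplus Y)=\boldsymbol{i}X\oplus\boldsymbol{i}Y$. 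The cancellation can then be carried out with the classical quaternion Jordan form of Lemma \ref{H}, or via Theorem \ref{RQS} with complex Jordan forms of the complex representations, where cancellation of direct summands is standard.
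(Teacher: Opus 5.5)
Your proposal is correct and follows the same route as the paper, which derives this corollary directly from the existence and uniqueness (up to block order) of the $\boldsymbol{i}$-Jordan canonical form in Theorem \ref{ISJCF} together with Corollary \ref{ISJCF2}(2). Your block-diagonal $\boldsymbol{i}$-similarities $I_n\oplus P$ and $P\oplus P$ for the ``if'' directions, and the multiset cancellation of $\boldsymbol{i}$-Jordan blocks (with halving of multiplicities in part (2)), supply the details that the paper leaves implicit.
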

\begin{corollary}
If the right $\boldsymbol{i}$-eigenvalues of a quaternion matrix $A$ have exactly n $\boldsymbol{i}$-similar classes, then $A$ can be transformed into a diagonal matrix by $\boldsymbol{i}$-similarity.
\end{corollary}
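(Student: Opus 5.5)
The plan is to reduce the statement to the classical fact about ordinary similarity of quaternion matrices, using the dictionary of Lemma \ref{RIST} and Lemma \ref{miemie}. Assume the right $\boldsymbol{i}$-eigenvalues of $A$ fall into exactly $n$ distinct $\boldsymbol{i}$-similarity classes. By Lemma \ref{miemie}, $\lambda$ is a right $\boldsymbol{i}$-eigenvalue of $A$ if and only if $\boldsymbol{i}\lambda$ is a right eigenvalue of $\boldsymbol{i}A$. By Proposition \ref{QISR} and Lemma \ref{RIST}(2) applied with $n=1$, we have $\lambda\sim_{\boldsymbol{i}}\mu$ if and only if $\boldsymbol{i}\lambda\sim\boldsymbol{i}\mu$. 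Hence $\boldsymbol{i}A$ has right eigenvalues in exactly $n$ distinct similarity classes.

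I would then avoid reproving eigenvector independence and work directly with the Jordan form. By Lemma \ref{H}, $J_{\boldsymbol{i}A}=\bigoplus_\ell J_{n_\ell}(\mu_\ell)$ with $\sum n_\ell=n$. Every right eigenvalue of a Jordan matrix is similar to one of the $\mu_\ell$, since right eigenvalues are similarity invariants. So the number of similarity classes of right eigenvalues is at most the number $r$ of blocks. Because there are $n$ classes, $r\ge n$, which forces $r=n$ and all $n_\ell=1$.

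Now the $\boldsymbol{i}$-Jordan form of Theorem \ref{ISJCF} is obtained from $J_{\boldsymbol{i}A}$ block by block with the same sizes $n_\ell$. Therefore $J_A^{\boldsymbol{i}}=\mathrm{diag}(\lambda_1,\dots,\lambda_n)$, and $A\sim_{\boldsymbol{i}}J_A^{\boldsymbol{i}}$ is diagonal.

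Alternatively, one can give a direct argument. Pick right $\boldsymbol{i}$-eigenvectors $x_1,\dots,x_n$ for representatives $\lambda_1,\dots,\lambda_n$ of the classes. These are right linearly independent by the remark in the proof of Lemma \ref{miemie}, so $X=(x_1,\dots,x_n)$ is invertible. The relation $A\overline{x_k}^{\boldsymbol{i}}=x_k\lambda_k$ gives $A\overline{X}^{\boldsymbol{i}}=X\Lambda$, that is, $X^{-1}A(\overline{X^{-1}}^{\boldsymbol{i}})^{-1}=\Lambda$, using Lemma \ref{haoduo}(2). With $P=X^{-1}$ this is an $\boldsymbol{i}$-similarity. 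The main point needing care is the counting step, namely that a Jordan matrix with $r$ blocks has at most $r$ classes of right eigenvalues. I would justify it via Theorem \ref{eng}: the characteristic polynomial of the complex representation has roots only $\mu_\ell,\overline{\mu_\ell}$, and right eigenvalues of a quaternion matrix correspond to these roots.
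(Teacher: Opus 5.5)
Your proposal is correct and follows the route the paper intends: the paper states this corollary without proof as a consequence of Theorem \ref{ISJCF}, and your main argument (transfer the count of classes to $\boldsymbol{i}A$ via Lemma \ref{miemie} and Lemma \ref{RIST}, force every Jordan block of $J_{\boldsymbol{i}A}$ to have size one, then read off a diagonal $J_A^{\boldsymbol{i}}$) is exactly that deduction, while your alternative argument rests on the eigenvector-independence fact quoted in the proof of Lemma \ref{miemie}. One small correction: in the counting step, the fact that each right eigenvalue of $J_{\boldsymbol{i}A}$ is similar to a diagonal entry comes from Theorem \ref{6.10}, since $J_{\boldsymbol{i}A}$ is triangular; similarity invariance only transfers the right eigenvalues from $\boldsymbol{i}A$ to $J_{\boldsymbol{i}A}$.
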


\section{Unitary i-congruence}
In this section, we investigate unitary \(\boldsymbol{i}\)-congruence of
quaternion matrices. We characterize quaternion matrices that
are unitarily \(\boldsymbol{i}\)-congruent to complex matrices, study
\(\boldsymbol{i}\)-conjugate normality and triangularization under unitary
\(\boldsymbol{i}\)-congruence, and finally introduce the \(\boldsymbol{i}\)-polar
decomposition.
\subsection{Quaternion matrices unitary i-congruent to complex matrices}

We propose the necessary and sufficient conditions for a quaternion matrix being unitarily $\boldsymbol{i}$-congruent to a complex matrix.
\begin{theorem}\label{10242}
A matrix $A\in M_{n}(\mathbb{H})$ is unitarily $\boldsymbol{i}$-congruent to a complex matrix if and only if $A$ is unitarily $\boldsymbol{i}$-congruent to the $\boldsymbol{i}$-conjugate $\overline{A}^{\boldsymbol{i}}$ by a special unitary matrix $P$ with $P=UU^{*\boldsymbol{i}}$, where $U\in M_{n}(\mathbb{H})$ is a unitary matrix.
\end{theorem}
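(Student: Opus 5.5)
The plan is to reduce the statement to Theorem \ref{QC1} by means of Lemma \ref{RIST}(3), which says $A\sim_{\boldsymbol{i}-u}B$ if and only if $\boldsymbol{i}A\sim_{u}\boldsymbol{i}B$. Multiplication by $\boldsymbol{i}$ (or by $-\boldsymbol{i}$) maps $M_n(\mathbb{C})$ onto itself. So if $A\sim_{\boldsymbol{i}-u}C$ with $C$ complex, then $\boldsymbol{i}A\sim_u \boldsymbol{i}C$, and $\boldsymbol{i}C$ is complex. Conversely, if $\boldsymbol{i}A\sim_u M$ with $M$ complex, then $\boldsymbol{i}A\sim_u \boldsymbol{i}(-\boldsymbol{i}M)$, hence $A\sim_{\boldsymbol{i}-u}-\boldsymbol{i}M$, which is complex. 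Thus $A$ is unitarily $\boldsymbol{i}$-congruent to a complex matrix if and only if $\boldsymbol{i}A$ is unitarily equivalent to a complex matrix.

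Next I apply Theorem \ref{QC1} to $\boldsymbol{i}A$. It is unitarily equivalent to a complex matrix if and only if there is a unitary $U$ such that, with $P=UU^{*\boldsymbol{i}}$,
$$P^{*}(\boldsymbol{i}A)P=\overline{\boldsymbol{i}A}^{\boldsymbol{i}}.$$
By Lemma \ref{haoduo}(6) we have $\overline{\boldsymbol{i}A}^{\boldsymbol{i}}=\boldsymbol{i}\,\overline{A}^{\boldsymbol{i}}$, because $\overline{\boldsymbol{i}}^{\boldsymbol{i}}=\boldsymbol{i}$. Left-multiplying by $-\boldsymbol{i}$ turns the condition into $(-\boldsymbol{i}P^{*}\boldsymbol{i})AP=\overline{A}^{\boldsymbol{i}}$, that is,
$$P^{*\boldsymbol{i}}AP=\overline{A}^{\boldsymbol{i}}.$$

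It remains to recognise this as a unitary $\boldsymbol{i}$-congruence implemented by a matrix of the required form. Put $W=P^{*\boldsymbol{i}}$. Then $W$ is unitary, and by Lemma \ref{haoduo}(1) $W^{*\boldsymbol{i}}=P$. Hence the relation reads $WAW^{*\boldsymbol{i}}=\overline{A}^{\boldsymbol{i}}$, which in the notation of Definition \ref{1022} says $A\sim_{\boldsymbol{i}-u}\overline{A}^{\boldsymbol{i}}$ via $W$.

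Using Lemma \ref{haoduo}(6) and (1) again,
$$W=(UU^{*\boldsymbol{i}})^{*\boldsymbol{i}}=(U^{*\boldsymbol{i}})^{*\boldsymbol{i}}U^{*\boldsymbol{i}}=UU^{*\boldsymbol{i}}=P.$$
So the implementing matrix is $P$ itself, and it has exactly the form demanded in the statement. Every step is an equivalence, so the two directions follow together.

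The main point requiring care is this last bookkeeping step. One has to make sure the unitary obtained from Theorem \ref{QC1} for $\boldsymbol{i}A$ really becomes a unitary $\boldsymbol{i}$-congruence with a matrix of the special form $UU^{*\boldsymbol{i}}$, rather than some twisted variant such as $U^{*\boldsymbol{i}}U$. The computation $(UU^{*\boldsymbol{i}})^{*\boldsymbol{i}}=UU^{*\boldsymbol{i}}$ settles this. I would also verify directly that $\overline{\boldsymbol{i}}^{\boldsymbol{i}}=-\boldsymbol{i}\,\boldsymbol{i}\,\boldsymbol{i}=\boldsymbol{i}$, so that no sign is lost in the translation.
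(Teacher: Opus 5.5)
Your proof is correct, but it is organized differently from the paper's. The paper only says that the proof of Theorem \ref{QC1} can be repeated with unitary equivalence replaced by unitary $\boldsymbol{i}$-congruence. Its intended proof therefore re-runs the whole argument: write $A$ in terms of a complex matrix, build the analogue of $D$ with $D\overline{D}^{\boldsymbol{i}}=I_n$, apply Lemma \ref{hahah}, and track the unitaries.

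You instead treat Theorem \ref{QC1} as a black box and transport it through the dictionary $A\mapsto\boldsymbol{i}A$ of Lemma \ref{RIST}(3). This is the same device the paper itself uses for Theorems \ref{QMISUT} and \ref{RUISIN}. Your bookkeeping is right: $\overline{\boldsymbol{i}}^{\boldsymbol{i}}=\boldsymbol{i}$, $-\boldsymbol{i}P^{*}\boldsymbol{i}=P^{*\boldsymbol{i}}$, and $(UU^{*\boldsymbol{i}})^{*\boldsymbol{i}}=UU^{*\boldsymbol{i}}$. Every step is reversible, so both directions come out together.

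What your route buys is that nothing has to be re-verified, and it shows the two theorems are literally the same statement up to $A\mapsto\boldsymbol{i}A$. As a by-product it exhibits the implementing matrix as $\boldsymbol{i}$-Hermitian: $P^{*\boldsymbol{i}}=P$, so the condition even reads $PAP=\overline{A}^{\boldsymbol{i}}$. The paper only notes this later, in the remark after Theorem \ref{11251}. The paper's route, if written out, would be a self-contained argument at the level of $\boldsymbol{i}$-congruence, but as stated it is only a sketch. Both routes ultimately rest on Theorem \ref{QC1} and hence on Lemma \ref{hahah}.

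One small point you could add for robustness. You used the convention $P^{*}(\boldsymbol{i}A)P=\overline{\boldsymbol{i}A}^{\boldsymbol{i}}$ from the proof of Theorem \ref{QC1}. This is harmless, because $P^{*}=\overline{U}^{\boldsymbol{i}}U^{*}=VV^{*\boldsymbol{i}}$ with $V=\overline{U}^{\boldsymbol{i}}$ unitary. So the class of admissible $P$ is closed under taking adjoints, and the statement does not depend on which side the adjoint sits.
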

\begin{proof}
The proof of this conclusion ia similar to that of Theorem \ref{QC1}, with the only difference being that we need to replace unitary equivalence with unitary $\boldsymbol{i}$-congruence.
\end{proof}

\begin{corollary}\label{10243}
Let $A\in M_{n}(\mathbb{H})$, then $A$ is unitarily $\boldsymbol{i}$-congruent to a real matrix in $M_{n}(\mathbb{H})$ if and only if $A$ is unitarily  $\boldsymbol{i}$-congruent to a complex matrix $C$ in $M_{n}(\mathbb{H})$ and the complex matrix $C$ is unitarily  $\boldsymbol{i}$-congruent(or, unitarily equivalent) to a real matrix in $M_{n}(\mathbb{C})$.
\end{corollary}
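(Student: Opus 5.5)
The plan is to prove both directions using only two facts: unitary $\boldsymbol{i}$-congruence is an equivalence relation (Definition \ref{1022}), and for complex unitaries it reduces to ordinary unitary equivalence. Recall that $A\sim_{\boldsymbol{i}-u}B$ means $B=UAU^{*\boldsymbol{i}}$ for some unitary $U\in M_n(\mathbb{H})$. When $U\in M_n(\mathbb{C})$, Lemma \ref{haoduo}(3) gives $U^{*\boldsymbol{i}}=U^{*}$, so the relation becomes $B=UAU^{*}$.

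\emph{Sufficiency.} Suppose $A\sim_{\boldsymbol{i}-u}C$ with $C\in M_n(\mathbb{C})$. Suppose also that $C$ is unitarily equivalent, inside $M_n(\mathbb{C})$, to a real matrix $R$, say $R=WCW^{*}$ with $W$ a complex unitary. By the remark above, $R=WCW^{*\boldsymbol{i}}$, so $C\sim_{\boldsymbol{i}-u}R$. Transitivity then gives $A\sim_{\boldsymbol{i}-u}R$. If the hypothesis is instead phrased as ``$C$ unitarily $\boldsymbol{i}$-congruent to a real matrix'', transitivity applies directly.

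\emph{Necessity.} Suppose $A\sim_{\boldsymbol{i}-u}R$ with $R$ real. Take $C:=R$, which is complex, so $A\sim_{\boldsymbol{i}-u}C$. The matrix $C$ is trivially unitarily equivalent to the real matrix $R$ via $I_n$, and this is also a unitary $\boldsymbol{i}$-congruence. So both conditions hold.

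\emph{Main obstacle.} The argument is essentially formal; the only delicate point is the parenthetical ``or, unitarily equivalent''. For the sufficiency direction, it must be clear that complex unitary equivalence is a special case of unitary $\boldsymbol{i}$-congruence. This is exactly the identity $U^{*\boldsymbol{i}}=-\boldsymbol{i}U^{*}\boldsymbol{i}=U^{*}$ for complex $U$, since complex scalars commute with $\boldsymbol{i}$. In the opposite direction there is a question of whether a quaternion unitary $\boldsymbol{i}$-congruence from $C$ to a real matrix can be replaced by a complex unitary equivalence. The plan avoids this entirely: in the necessity direction the witness can be taken to be $C=R$ itself, so no such replacement is needed. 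Theorem \ref{10242} is not required, though one could mention that it characterizes the existence of $C$ in the first condition.
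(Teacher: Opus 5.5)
Your proof is correct. The paper states this corollary without proof, right after Theorem \ref{10242}, so there is no argument of the authors to compare against. Your route needs only three things: transitivity of $\sim_{\boldsymbol{i}-u}$, the identity $W^{*\boldsymbol{i}}=W^{*}$ for complex unitaries $W$, and the witness $C=R$. Neither Theorem \ref{10242} nor Theorem \ref{Kh1} is needed.

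The question you left open in your last paragraph has a negative answer. This shows that the choice $C:=R$ is essential, and that the corollary is true only when read existentially. Let $C=\mathrm{diag}(1+\boldsymbol{i},-1-\boldsymbol{i})$ and $V=\mathrm{diag}(1,\boldsymbol{j})$. Then $V^{*\boldsymbol{i}}=\mathrm{diag}(1,\boldsymbol{j})$ and $VCV^{*\boldsymbol{i}}=\mathrm{diag}(1+\boldsymbol{i},1-\boldsymbol{i})$. The latter is unitarily equivalent in $M_2(\mathbb{C})$ to the real matrix $\begin{pmatrix}1&-1\\1&1\end{pmatrix}$, so $C$ is unitarily $\boldsymbol{i}$-congruent to a real matrix. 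Yet $C$ is not even similar in $M_2(\mathbb{C})$ to a real matrix, because its spectrum $\{1+\boldsymbol{i},-1-\boldsymbol{i}\}$ is not closed under conjugation. So for $A=C$, the complex witness $A$ itself fails the second condition, while $R$ works.

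Your reading of the parenthetical as equivalence by a \emph{complex} unitary is also forced. The matrix $(1+\boldsymbol{i})I_2$ is unitarily equivalent in $M_2(\mathbb{H})$ to a real matrix: conjugate the second diagonal entry by $\boldsymbol{j}$. But by Lemma \ref{RIST} it is not unitarily $\boldsymbol{i}$-congruent to any real $R$. The spectrum of $(\boldsymbol{i}R)_{\mathbb{C}}=\boldsymbol{i}R\oplus(-\boldsymbol{i}R)$ is invariant under $\lambda\mapsto-\lambda$, whereas that of $\bigl((-1+\boldsymbol{i})I_2\bigr)_{\mathbb{C}}$ is not.
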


\subsection{i-conjugate normality}
As well known, each matrix $A\in M_{n}(\mathbb{H})$ can be unitarily equivalent to an upper triangular matrix $T$ is well-known to everyone.
\begin{theorem}[\cite{B}]\label{yy}
Every matrix $A$ of real quaternions can be transformed into a triangular form $T$ by a unitary matrix $U$.
\end{theorem}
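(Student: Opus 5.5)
We prove the statement by induction on $n$, in the spirit of Schur triangularization. The two inputs are the existence of a right eigenvalue for every quaternion matrix and the fact that a unit vector can be completed to a unitary matrix over $\mathbb{H}$.

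\textbf{Step 1 (a right eigenvalue).} By Lemma \ref{H}, $A$ is similar to a complex Jordan form $J_A$. Hence $A$ has a right eigenvalue $\lambda\in\mathbb{C}$, with a nonzero eigenvector $\boldsymbol{x}\in\mathbb{H}^n$ satisfying $A\boldsymbol{x}=\boldsymbol{x}\lambda$. Concretely, if $P^{-1}AP=J_A$, take $\boldsymbol{x}$ to be the column of $P$ corresponding to the top of a Jordan block. Replacing $\boldsymbol{x}$ by $\boldsymbol{x}/\|\boldsymbol{x}\|$, where $\|\boldsymbol{x}\|^2=\boldsymbol{x}^*\boldsymbol{x}$ is a positive real number, we may assume $\boldsymbol{x}^*\boldsymbol{x}=1$. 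Real scaling commutes with everything, so the eigen-relation is preserved.

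\textbf{Step 2 (completion to a unitary).} Regard $\mathbb{H}^n$ as a right $\mathbb{H}$-vector space with inner product $\langle \boldsymbol{y},\boldsymbol{z}\rangle=\boldsymbol{y}^*\boldsymbol{z}$. This form is sesquilinear in the appropriate sense: $\langle \boldsymbol{y}a,\boldsymbol{z}b\rangle=\overline{a}\langle\boldsymbol{y},\boldsymbol{z}\rangle b$. Extend $\boldsymbol{x}$ to a right basis of $\mathbb{H}^n$, then run Gram--Schmidt with the projection $\boldsymbol{z}-\sum_k \boldsymbol{u}_k(\boldsymbol{u}_k^*\boldsymbol{z})$, putting scalars on the right. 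This produces $U_1=(\boldsymbol{x},\boldsymbol{u}_2,\dots,\boldsymbol{u}_n)$ with $U_1^*U_1=I_n$. Since $U_1$ is invertible by the complex representation ($(U_1)_\mathbb{C}$ is unitary), we also have $U_1U_1^*=I_n$.

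\textbf{Step 3 (deflation).} Compute the first column of $U_1^*AU_1$. We have $U_1^*A\boldsymbol{x}=U_1^*\boldsymbol{x}\lambda=e_1\lambda$, so
$$U_1^*AU_1=\begin{pmatrix}\lambda & \ast\\ 0 & A_1\end{pmatrix},\qquad A_1\in M_{n-1}(\mathbb{H}).$$
By the induction hypothesis, $V^*A_1V=T_1$ is upper triangular for some unitary $V$. Then $U=U_1(1\oplus V)$ is unitary and $U^*AU$ is upper triangular. The case $n=1$ is trivial. The diagonal entries can even be taken complex, by Step 1 at each stage.

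\textbf{Main obstacle.} The only delicate point is the noncommutativity in Steps 1 and 2. One must keep eigenvalues acting on the right, so that $A\boldsymbol{x}=\boldsymbol{x}\lambda$ rather than a left eigenvalue; this is what makes the first column of $U_1^*AU_1$ equal to $e_1\lambda$. Likewise, the Gram--Schmidt coefficients must multiply on the right so that orthogonality is preserved. I would check carefully that $\boldsymbol{u}_k^*(\boldsymbol{z}-\boldsymbol{u}_k c)=0$ for $c=\boldsymbol{u}_k^*\boldsymbol{z}$. Existence of a right eigenvalue is taken from Lemma \ref{H}, since it is not reproved here.
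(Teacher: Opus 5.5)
The paper gives no proof of this statement. It is quoted from Brenner \cite{B} and used as a black box, for instance in Theorem \ref{QMISUT} applied to $\boldsymbol{i}A$.

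Your Schur-type induction is correct:
\begin{itemize}
\item the leading column of a Jordan chain gives $A\boldsymbol{x}=\boldsymbol{x}\lambda$;
\item Gram--Schmidt with scalars acting on the right yields a unitary $U_1$ whose first column is $\boldsymbol{x}$;
\item $U_1^*AU_1e_1=U_1^*\boldsymbol{x}\lambda=e_1\lambda$ gives the deflation;
\item $U=U_1(1\oplus V)$ closes the induction.
\end{itemize}
This is the same reduction scheme the paper itself runs in the proof of Theorem \ref{3.3}, with Theorem \ref{6.11} in place of your Gram--Schmidt step.

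The one thing worth changing is the source of the right eigenvalue. This is not a gap, since Lemma \ref{H} is stated in the paper. But the full Jordan form is far more than you need, and using it to prove Schur triangularization risks circularity, depending on how that form was established. It is cleaner to take a complex eigenpair $A_{\mathbb{C}}w=\lambda w$ with $w=\begin{pmatrix}x_1\\ x_2\end{pmatrix}\neq 0$. Then $\boldsymbol{x}=x_1+\boldsymbol{j}x_2\neq 0$ satisfies $A\boldsymbol{x}=\boldsymbol{x}\lambda$, because $z\boldsymbol{j}=\boldsymbol{j}\overline{z}$ for complex $z$ gives
\[
(A_1+\boldsymbol{j}A_2)(x_1+\boldsymbol{j}x_2)=(A_1x_1-\overline{A_2}x_2)+\boldsymbol{j}(A_2x_1+\overline{A_1}x_2),
\]
and $\boldsymbol{x}\lambda=x_1\lambda+\boldsymbol{j}x_2\lambda$ for $\lambda\in\mathbb{C}$. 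With that substitution your argument is self-contained, apart from the elementary facts about the complex representation listed in the paper's preliminaries. Those facts are also what you use to get $U_1U_1^*=I_n$ from $U_1^*U_1=I_n$.
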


For $A\in M_{n}(\mathbb{H})$, we find that the unitary $\boldsymbol{i}$-congruence could do the same thing as unitary equivalence does in Theorem \ref{yy}.
\begin{theorem}\label{QMISUT}
Let $A$ be a matrix in $M_n(\mathbb{H})$, then $A$ is unitarily  $\boldsymbol{i}$-congruent to an upper triangular matrix, that is, there exists a unitary matrix $U\in M_n(\mathbb{H})$ such that $UAU^{*\boldsymbol{i}}$ is an upper triangular matrix.
\end{theorem}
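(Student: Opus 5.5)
Lemma \ref{RIST}(3) turns the problem into one about ordinary unitary equivalence, where Theorem \ref{yy} applies. First I will make that equivalence explicit. For a unitary $U$ we have $U^{*\boldsymbol{i}}=-\boldsymbol{i}U^{*}\boldsymbol{i}$, so
$$\boldsymbol{i}\,(UAU^{*\boldsymbol{i}})=\boldsymbol{i}UA(-\boldsymbol{i}U^{*}\boldsymbol{i})=(\boldsymbol{i}U(-\boldsymbol{i}))(\boldsymbol{i}A)(U^{*}\boldsymbol{i}).$$
Here $\boldsymbol{i}U(-\boldsymbol{i})=\overline{U}^{\boldsymbol{i}}$ is unitary, and $U^{*}\boldsymbol{i}=\boldsymbol{i}(-\boldsymbol{i}U^{*}\boldsymbol{i})=\boldsymbol{i}\,\overline{U^{*}}^{\boldsymbol{i}}$. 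Hence $\boldsymbol{i}(UAU^{*\boldsymbol{i}})=W(\boldsymbol{i}A)W^{*}$ with $W=\overline{U}^{\boldsymbol{i}}$, using $\overline{U^{*}}^{\boldsymbol{i}}=(\overline{U}^{\boldsymbol{i}})^{*}$ from Lemma \ref{haoduo}(1). So $UAU^{*\boldsymbol{i}}=B$ holds exactly when $W(\boldsymbol{i}A)W^{*}=\boldsymbol{i}B$ with $W=\overline{U}^{\boldsymbol{i}}$.

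Next I apply Theorem \ref{yy} to the quaternion matrix $\boldsymbol{i}A$. This gives a unitary $W$ with $W(\boldsymbol{i}A)W^{*}=T$ upper triangular; if the theorem produces a lower triangular form instead, conjugate by the reversal permutation matrix to make it upper triangular. Then I set $U=\overline{W}^{\boldsymbol{i}}$, which is unitary because $(\overline{W}^{\boldsymbol{i}})^{*}\overline{W}^{\boldsymbol{i}}=\overline{W^{*}W}^{\boldsymbol{i}}=I$ by Lemma \ref{haoduo}. By Lemma \ref{haoduo}(1), $\overline{U}^{\boldsymbol{i}}=W$, so the first step gives $UAU^{*\boldsymbol{i}}=-\boldsymbol{i}T$.

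Finally, $-\boldsymbol{i}T$ is upper triangular, since left multiplication by the scalar $-\boldsymbol{i}$ acts entrywise and keeps the zero pattern of $T$. This proves the theorem.

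The only point needing care is the bookkeeping of where $\boldsymbol{i}$ sits, on the left or right, given noncommutativity. It is already packaged in Lemma \ref{RIST}(3), and I will simply verify the displayed identity directly rather than rely on the lemma's phrasing.
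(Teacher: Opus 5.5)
Your route is the paper's: pass to $\boldsymbol{i}A$, triangularize it by an ordinary unitary equivalence (Theorem \ref{yy}), and transport back. The paper does the last step via Lemma \ref{RIST}(3). The identity you end up with, $\boldsymbol{i}(UAU^{*\boldsymbol{i}})=W(\boldsymbol{i}A)W^{*}$ with $W=\overline{U}^{\boldsymbol{i}}$, is true. The rest of your argument is also fine: unitarity of $\overline{W}^{\boldsymbol{i}}$, the relation $\overline{U}^{\boldsymbol{i}}=W$, and the fact that $-\boldsymbol{i}T$ is upper triangular.

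The problem is your explicit verification of that identity, which is exactly the bookkeeping you flagged as delicate. It contains two errors that happen to cancel.

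First, the second equality in your display is false. The product $(\boldsymbol{i}U(-\boldsymbol{i}))(\boldsymbol{i}A)(U^{*}\boldsymbol{i})$ collapses to $\boldsymbol{i}UAU^{*}\boldsymbol{i}$. But $\boldsymbol{i}UA(-\boldsymbol{i}U^{*}\boldsymbol{i})=\boldsymbol{i}UA(-\boldsymbol{i})U^{*}\boldsymbol{i}$, so the factor $-\boldsymbol{i}$ between $A$ and $U^{*}$ has simply been lost. For $U=A=I_n$ the two sides are $\boldsymbol{i}I_n$ and $-I_n$.

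Second, substituting your (correct) relation $U^{*}\boldsymbol{i}=\boldsymbol{i}W^{*}$ into your right-hand side yields $W(\boldsymbol{i}A\boldsymbol{i})W^{*}$, not $W(\boldsymbol{i}A)W^{*}$. So the ``Hence'' silently drops a factor $\boldsymbol{i}$.

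The fix is to leave $U^{*\boldsymbol{i}}$ intact, since by Lemma \ref{haoduo}(1) it already equals $(\overline{U}^{\boldsymbol{i}})^{*}=W^{*}$. Insert $(-\boldsymbol{i})\boldsymbol{i}=1$ only between $U$ and $A$:
\[
W(\boldsymbol{i}A)W^{*}=(-\boldsymbol{i}U\boldsymbol{i})(\boldsymbol{i}A)(-\boldsymbol{i}U^{*}\boldsymbol{i})=\boldsymbol{i}UA\,U^{*\boldsymbol{i}},
\]
because $(-\boldsymbol{i}U\boldsymbol{i})(\boldsymbol{i}A)=-\boldsymbol{i}U\boldsymbol{i}^{2}A=\boldsymbol{i}UA$. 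With this line in place of your display, the proof is complete. It is the paper's argument with the change of unitary, $U=\overline{W}^{\boldsymbol{i}}$, made explicit.
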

\begin{proof}
Assume the quaternion matrix $\boldsymbol{i}A$ is unitarily equivalent to an upper triangular matrix $\boldsymbol{i}T$, hence $T$ is itself upper triangular. Note that the  unitary equivalence between $\boldsymbol{i}A$ and $\boldsymbol{i}T$ is equivalent to the unitary $\boldsymbol{i}$-congruence between $A$ and $T$, we obtain the conclusion.
\end{proof}
Especially, if $A$ is normal, then $T$ in Theorem \ref{yy} is diagonal, if $A$ is Hermitian, all the diagonal elements of $T$ can be real numbers.
\begin{corollary}[\cite{Z}]
A matrix $A$ in $M_n(\mathbb{H})$ is normal if and only if there exists a unitary matrix $U$ such that
$$
U^{*}AU=\mathrm{diag} (h_1+k_1\boldsymbol{i},\cdots,h_n+k_n\boldsymbol{i}),
$$
where $h_s$ and $k_s$ are real numbers for $1\leq s\leq n$, the $k_s$ can be taken as nonnegative numbers actually, and $A$ is Hermitian if and only if $k_1=k_2=\cdots=k_n=0$.
\end{corollary}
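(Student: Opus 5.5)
Starting from Theorem \ref{yy}, I would prove the two directions of the normality characterization separately, and then handle the Hermitian refinement.

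\emph{Sufficiency.} Suppose $U^{*}AU=D=\mathrm{diag}(h_1+k_1\boldsymbol{i},\dots,h_n+k_n\boldsymbol{i})$. Then $D^{*}$ is the diagonal matrix of complex conjugates, and diagonal complex matrices commute, so $DD^{*}=D^{*}D$. Conjugating by $U$ gives $AA^{*}=UDD^{*}U^{*}=UD^{*}DU^{*}=A^{*}A$. If in addition all $k_s=0$, then $D$ is real, hence $D^{*}=D$ and $A=UDU^{*}$ is Hermitian.

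\emph{Necessity.} Let $A$ be normal. By Theorem \ref{yy}, choose a unitary $U$ with $U^{*}AU=T$ upper triangular. Normality is preserved under unitary equivalence, so $TT^{*}=T^{*}T$. I would run the standard induction by comparing the $(1,1)$ entries of the two products:
\[
(TT^{*})_{11}=\sum_{j}|t_{1j}|^{2}, \qquad (T^{*}T)_{11}=|t_{11}|^{2}.
\]
Equality forces $t_{1j}=0$ for all $j>1$. Deleting the first row and column and repeating shows that $T$ is diagonal. This step is valid over $\mathbb{H}$ because $\overline{a}a=a\overline{a}=|a|^{2}$ for every quaternion $a$.

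Each diagonal entry $t_{ss}$ is then a quaternion, and by the remark before Proposition \ref{QISR} it is unitarily similar to the complex number
\[
h_s+k_s\boldsymbol{i}, \qquad h_s=\mathrm{Re}(t_{ss}), \qquad k_s=|\mathrm{Im}(t_{ss})|\ge 0.
\]
So there are unit quaternions $u_s$ with $\overline{u_s}\,t_{ss}\,u_s=h_s+k_s\boldsymbol{i}$. Replacing $U$ by $U\,\mathrm{diag}(u_1,\dots,u_n)$ gives the required form, with every $k_s$ nonnegative.

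\emph{Hermitian refinement.} If $A$ is Hermitian, then $D=U^{*}AU$ is Hermitian. Its diagonal entries therefore satisfy $\overline{d_{ss}}=d_{ss}$, so they are real and every $k_s=0$.

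\emph{Expected obstacle.} The only step needing care is the existence of the unit quaternion $u_s$ rotating the imaginary part of $t_{ss}$ onto the positive $\boldsymbol{i}$-axis. If $\mathrm{Im}(t_{ss})=0$, take $u_s=1$. Otherwise, with $v=\mathrm{Im}(t_{ss})/|\mathrm{Im}(t_{ss})|$, I would take $u_s$ proportional to $1-\boldsymbol{i}v$; a direct computation gives $\overline{u_s}\,v\,u_s=\boldsymbol{i}$. The one exceptional case is $v=-\boldsymbol{i}$, where $1-\boldsymbol{i}v=0$, and there one takes $u_s=\boldsymbol{j}$ instead.
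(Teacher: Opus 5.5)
Your proof takes the same route the paper indicates. The paper only remarks that, for normal $A$, the triangular $T$ of Theorem \ref{yy} is diagonal, with real diagonal entries when $A$ is Hermitian, and refers to \cite{Z}. Your steps are Schur triangularization, the $(1,1)$-entry comparison forcing a normal triangular matrix to be diagonal, a unit-quaternion rotation of each diagonal entry into $\mathbb{C}$ with nonnegative $\boldsymbol{i}$-part, and reality of the diagonal entries of a Hermitian $U^{*}AU$. All of this is sound. The existence of the $u_s$ is already justified by the unitary-similarity criterion you quote before Proposition \ref{QISR}.

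The one incorrect claim is the explicit formula in your last paragraph: it has the conjugation on the wrong side. For $q=1-\boldsymbol{i}v$ one computes $qv=v+\boldsymbol{i}=\boldsymbol{i}q$. So $u_s=q/|q|$ satisfies $u_s\,v\,\overline{u_s}=\boldsymbol{i}$, not $\overline{u_s}\,v\,u_s=\boldsymbol{i}$. For example, with $v=\boldsymbol{j}$ you get $q=1-\boldsymbol{k}$ and $\overline{q}\,\boldsymbol{j}\,q=(1+\boldsymbol{k})\boldsymbol{j}(1-\boldsymbol{k})=-2\boldsymbol{i}$, hence $\overline{u_s}\,v\,u_s=-\boldsymbol{i}$.

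Use instead $q=1-v\boldsymbol{i}=\overline{1-\boldsymbol{i}v}$. Then $vq=v+\boldsymbol{i}=q\boldsymbol{i}$, so $\overline{q}\,v\,q=|q|^{2}\boldsymbol{i}$, and with $u_s=q/|q|$ you get $\overline{u_s}\,t_{ss}\,u_s=h_s+k_s\boldsymbol{i}$. The exceptional case $q=0$ is still exactly $v=-\boldsymbol{i}$, and there your choice $u_s=\boldsymbol{j}$ works. With this correction the proof is complete.
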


A normal matrix can be transformed into a diagonal matrix through unitary equivalence, we show a new concept analogous to normality to achieve diagonalization under unitary $\boldsymbol{i}$-congruence.

\begin{definition}
Let $A$ be a matrix in $M_n(\mathbb{H})$, if $A$ satisfies
$$\overline{A^*A}^{\boldsymbol{i}}=\overline{A^*}^{\boldsymbol{i}}\overline{A}^{\boldsymbol{i}}=AA^*,$$
then we say $A$ is an $\boldsymbol{i}$-conjugate normal matrix.
\end{definition}

\begin{corollary}
Let $A$ be a  matrix in $M_n(\mathbb{H})$. Then the $\boldsymbol{i}$-conjugate normality of $A$  is equivalent to the $\boldsymbol{i}$-conjugate normality of $A^*, \overline{A}^{\boldsymbol{i}},\ A^{*{\boldsymbol{i}}},\ A^{-1}$(if $A$ is invertible), respectively.
\end{corollary}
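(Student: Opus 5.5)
The plan is to write the defining identity of $\boldsymbol{i}$-conjugate normality in the equivalent form
\[
\overline{A^*A}^{\boldsymbol{i}}=AA^*, \qquad\text{i.e.}\qquad A^*A=\overline{AA^*}^{\boldsymbol{i}},
\]
where the second version comes from applying the $\boldsymbol{i}$-conjugate to both sides and using Lemma \ref{haoduo}(1). Each of the four matrices will be handled by pushing the operations $^*$, $\overline{\phantom{A}}^{\boldsymbol{i}}$, $^{*\boldsymbol{i}}$ and $^{-1}$ through products. The tools are Lemma \ref{haoduo}(1),(2),(6),(7): the $\boldsymbol{i}$-conjugate is multiplicative and involutive, it commutes with $^*$ and with inversion, and $(\overline{A}^{\boldsymbol{i}})^*=A^{*\boldsymbol{i}}$.

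For $A^*$, the condition reads $\overline{AA^*}^{\boldsymbol{i}}=A^*A$. This is exactly the second form above, so it is equivalent to $\boldsymbol{i}$-conjugate normality of $A$.

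For $\overline{A}^{\boldsymbol{i}}$, set $B=\overline{A}^{\boldsymbol{i}}$. Then $B^*=\overline{A^*}^{\boldsymbol{i}}$ and $B^*B=\overline{A^*A}^{\boldsymbol{i}}$. Hence
\[
\overline{B^*B}^{\boldsymbol{i}}=A^*A, \qquad BB^*=\overline{AA^*}^{\boldsymbol{i}}.
\]
So the condition for $B$ is $A^*A=\overline{AA^*}^{\boldsymbol{i}}$, which is again the second form.

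For $A^{*\boldsymbol{i}}$, note that $A^{*\boldsymbol{i}}=(\overline{A}^{\boldsymbol{i}})^*$. It therefore follows by combining the two previous cases: $A$ is $\boldsymbol{i}$-conjugate normal iff $\overline{A}^{\boldsymbol{i}}$ is, iff $(\overline{A}^{\boldsymbol{i}})^*$ is.

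For $A^{-1}$ with $A$ invertible, use $(A^{-1})^*=(A^*)^{-1}$. Then
\[
(A^{-1})^*A^{-1}=(AA^*)^{-1}, \qquad A^{-1}(A^{-1})^*=(A^*A)^{-1}.
\]
By Lemma \ref{haoduo}(2), the condition for $A^{-1}$ becomes $\bigl(\overline{AA^*}^{\boldsymbol{i}}\bigr)^{-1}=(A^*A)^{-1}$. Inverting both sides gives $\overline{AA^*}^{\boldsymbol{i}}=A^*A$, the second form once more. The only step requiring care is keeping the order of factors right when taking $^*$ and inverses of products in the noncommutative setting. Since $\overline{\cdot}^{\boldsymbol{i}}$ preserves order (Lemma \ref{haoduo}(6)), no real obstacle arises.
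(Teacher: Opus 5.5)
Your proof is correct. Every case reduces to the equivalent form $A^*A=\overline{AA^*}^{\boldsymbol{i}}$ of the definition, using only that $X\mapsto\overline{X}^{\boldsymbol{i}}=(\boldsymbol{i}I_n)^{-1}X(\boldsymbol{i}I_n)$ is an order-preserving involutive automorphism commuting with $^*$ and inversion (Lemma \ref{haoduo}), and you keep the order of factors right in the $A^{-1}$ case.

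The paper states this corollary without proof, and this direct check from Lemma \ref{haoduo} is evidently the intended argument. An equally short alternative is the paper's remark in the proof of Theorem \ref{RUISIN} that $A$ is $\boldsymbol{i}$-conjugate normal iff $\boldsymbol{i}A$ is normal.
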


\begin{theorem}\label{RUISIN}
Let $A$ be in $M_n(\mathbb{H})$, then the following are equivalent.
\begin{enumerate}
\item $A$ is unitarily $\boldsymbol{i}$-congruent to a diagonal matrix,

\item $A$ is an $\boldsymbol{i}$-conjugate normal matrix.
\end{enumerate}
\end{theorem}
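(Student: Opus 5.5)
The plan is to reduce the statement to the classical fact that normal quaternion matrices are unitarily diagonalizable. Lemma \ref{RIST}(3) turns unitary $\boldsymbol{i}$-congruence of $A$ into unitary equivalence of $\boldsymbol{i}A$. The key observation is that $A$ is $\boldsymbol{i}$-conjugate normal if and only if $B:=\boldsymbol{i}A$ is normal.

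\textbf{Step 1: normality of $B$.} We have $B^{*}=A^{*}(-\boldsymbol{i})=-A^{*}\boldsymbol{i}$. Hence
\[
B^{*}B=-A^{*}\boldsymbol{i}\boldsymbol{i}A=A^{*}A,
\]
and
\[
BB^{*}=-\boldsymbol{i}AA^{*}\boldsymbol{i}=\overline{AA^{*}}^{\boldsymbol{i}}.
\]
So $B$ is normal if and only if $A^{*}A=\overline{AA^{*}}^{\boldsymbol{i}}$. Apply the $\boldsymbol{i}$-conjugate to both sides and use Lemma \ref{haoduo}(1). This shows the condition is equivalent to $\overline{A^{*}A}^{\boldsymbol{i}}=AA^{*}$, which is the definition of $\boldsymbol{i}$-conjugate normality. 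The middle expression $\overline{A^*}^{\boldsymbol{i}}\overline{A}^{\boldsymbol{i}}$ in the definition agrees with this by Lemma \ref{haoduo}(6).

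\textbf{Step 2: (2)$\Rightarrow$(1).} If $A$ is $\boldsymbol{i}$-conjugate normal, then $\boldsymbol{i}A$ is normal. By the corollary quoted from \cite{Z}, there is a unitary $U$ with
\[
U^{*}(\boldsymbol{i}A)U=\Lambda=\mathrm{diag}(h_1+k_1\boldsymbol{i},\dots,h_n+k_n\boldsymbol{i}).
\]
Put $D=-\boldsymbol{i}\Lambda$, which is again diagonal (indeed complex) and satisfies $\boldsymbol{i}D=\Lambda$. Thus $\boldsymbol{i}A\sim_{u}\boldsymbol{i}D$, and Lemma \ref{RIST}(3) gives $A\sim_{\boldsymbol{i}-u}D$.

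\textbf{Step 3: (1)$\Rightarrow$(2).} Suppose $A\sim_{\boldsymbol{i}-u}D$ with $D$ diagonal. By Lemma \ref{RIST}(3), $\boldsymbol{i}A\sim_{u}\boldsymbol{i}D$. Here $\boldsymbol{i}D$ is a diagonal quaternion matrix, and every diagonal quaternion matrix is normal, since $\overline{d}d=d\overline{d}=|d|^{2}$ entrywise. Normality is preserved under unitary equivalence, so $\boldsymbol{i}A$ is normal, and Step 1 shows $A$ is $\boldsymbol{i}$-conjugate normal.

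The only step needing real care is the bookkeeping in Step 1. One must check that the identity is exactly $BB^{*}=\overline{AA^{*}}^{\boldsymbol{i}}$, with the correct sign and the correct side for $\boldsymbol{i}$. One must also check that the direction of the unitary matrix in Lemma \ref{RIST}(3) matches the definition $UAU^{*\boldsymbol{i}}=B$. I would verify the latter directly: with $W=\overline{U}^{\boldsymbol{i}}$, which is unitary, one should get $\boldsymbol{i}(UAU^{*\boldsymbol{i}})=W(\boldsymbol{i}A)W^{*}$, up to relabeling. Everything else is a direct appeal to the spectral theorem for normal quaternion matrices.
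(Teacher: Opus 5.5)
Your proof is correct and is essentially the paper's own second route: the paper likewise notes that $A$ is $\boldsymbol{i}$-conjugate normal if and only if $\boldsymbol{i}A$ is normal, and that $\overline{U}^{\boldsymbol{i}}AU^{*}=T$ is equivalent to $U(\boldsymbol{i}A)U^{*}=\boldsymbol{i}T$, and then applies the spectral theorem for normal quaternion matrices. Your proposed check with $W=\overline{U}^{\boldsymbol{i}}$ does go through exactly, giving $\boldsymbol{i}(UAU^{*\boldsymbol{i}})=W(\boldsymbol{i}A)W^{*}$, so no relabeling is needed.
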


 \begin{proof}
 We can draw the conclusion of the Theorem from two perspectives, one is, the proof of this Theorem is analogous to that of the well-known result in quaternion matrix theory that a quaternion matrix $A$ is unitarily equivalent to a diagonal matrix if and only if $A$ is a normal matrix. Another is, one should note that $A$ is $\boldsymbol{i}$-conjugate normal if and only if $\boldsymbol{i}A$ is normal and $\overline{U}^{\boldsymbol{i}}AU^{*}=T$ for a unitary $U$ and a diagonal $T$ is equivalent to $U(\boldsymbol{i}A)U^{*}=\boldsymbol{i}T$.
 \end{proof}

In fact, by Lemma \ref{yingwen}, Proposition \ref{p3.3} and the truth that the diagonal elements of a diagonal matrix are its right $\boldsymbol{i}$-eigenvalues, the main diagonal elements of $T$ can be taken as complex numbers with nonnegative real parts, we call such a $T$ the canonical form of an $\boldsymbol{i}$-conjugate normal matrix $A$ under unitary $\boldsymbol{i}$-congruence. We can see that $T$ is normal actually.

\begin{definition}\label{10221}
Let $A$ be a matrix in $M_n(\mathbb{H})$, if $\overline{ A }^{\boldsymbol{i}}= A^* $ ($\overline{ A }^{\boldsymbol{i}} = -A^{*}$), or equivalently, $A=A^{*{\boldsymbol{i}}}$ ($A=-A^{*{\boldsymbol{i}}}$), then we say $A$ is an $\boldsymbol{i}$-Hermitian matrix (skew $\boldsymbol{i}$-Hermitian matrix).
\end{definition}
From Definition \ref{10221} we can see that both $\boldsymbol{i}$-Hermitian and skew $\boldsymbol{i}$-Hermitian quaternion matrices are $\boldsymbol{i}$-conjugate normal matrices.
\begin{corollary}
Let $A\in M_{n}(\mathbb{H})$, then the $\boldsymbol{i}$-Hermiticity(skew $\boldsymbol{i}$-Hermiticity) of $A$ is equivalent to the $\boldsymbol{i}$-Hermiticity(skew $\boldsymbol{i}$-Hermiticity) of $A^{*},\ \overline{A}^{\boldsymbol{i}},\ A^{*{\boldsymbol{i}}}$, $\overline{A},\ A^{T},\ A^{-1}$ (if A is invertible), respectively.
\end{corollary}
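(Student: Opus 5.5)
The plan is to use only the identities collected in Lemma \ref{haoduo}. Each operation $X\mapsto X^{*}$, $\overline{X}^{\boldsymbol{i}}$, $X^{*\boldsymbol{i}}$, $\overline{X}$, $X^{T}$, $X^{-1}$ is an involution, and each commutes with the $\boldsymbol{i}$-transpose $X\mapsto X^{*\boldsymbol{i}}$. So it suffices to prove one implication: if $A=A^{*\boldsymbol{i}}$, then $\phi(A)=\phi(A)^{*\boldsymbol{i}}$ for each such operation $\phi$. The converse follows by applying this implication to $\phi(A)$, since $\phi(\phi(A))=A$.

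Assume $A=A^{*\boldsymbol{i}}$ and apply each operation to both sides, rewriting the right-hand side with Lemma \ref{haoduo}.
\begin{itemize}
\item For $A^{*}$: part (1) gives $(A^{*\boldsymbol{i}})^{*}=(A^{*})^{*\boldsymbol{i}}$, hence $A^{*}=(A^{*})^{*\boldsymbol{i}}$.
\item For $\overline{A}^{\boldsymbol{i}}$: part (7) gives $\overline{A^{*\boldsymbol{i}}}^{\boldsymbol{i}}=(\overline{A}^{\boldsymbol{i}})^{*\boldsymbol{i}}$, hence $\overline{A}^{\boldsymbol{i}}=(\overline{A}^{\boldsymbol{i}})^{*\boldsymbol{i}}$.
\item For $A^{*\boldsymbol{i}}$: this case is immediate, since $A^{*\boldsymbol{i}}=A$ and $(A^{*\boldsymbol{i}})^{*\boldsymbol{i}}=A$ by part (1).
\item For $\overline{A}$ (the entrywise quaternion conjugate): part (8) gives $\overline{A^{*\boldsymbol{i}}}=(\overline{A})^{*\boldsymbol{i}}$, hence $\overline{A}=(\overline{A})^{*\boldsymbol{i}}$.
\item For $A^{T}$: part (9) gives $(A^{*\boldsymbol{i}})^{T}=(A^{T})^{*\boldsymbol{i}}$, hence $A^{T}=(A^{T})^{*\boldsymbol{i}}$.
\item For $A^{-1}$, when $A$ is invertible: part (2) gives $(A^{-1})^{*\boldsymbol{i}}=(A^{*\boldsymbol{i}})^{-1}=A^{-1}$.
\end{itemize}

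For skew $\boldsymbol{i}$-Hermiticity, $A=-A^{*\boldsymbol{i}}$, the same chain works. Every operation except inversion is real-linear (part (5), together with the fact that real scalars commute with $\boldsymbol{i}$, conjugation and transposition), so the sign passes through. For the inverse, $(-X)^{-1}=-X^{-1}$ gives $(A^{-1})^{*\boldsymbol{i}}=(A^{*\boldsymbol{i}})^{-1}=(-A)^{-1}=-A^{-1}$.

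The only point needing care is the choice of the correct identity in each case. The cases $\overline{A}$ and $A^{T}$ involve the ordinary quaternion conjugate and transpose, which are not anti-multiplicative on $M_n(\mathbb{H})$. Here I would rely on the entrywise nature of parts (8) and (9), and double-check them directly from $A^{*\boldsymbol{i}}=-\boldsymbol{i}A^{*}\boldsymbol{i}$ entrywise: for instance, $\overline{-\boldsymbol{i}\,\overline{a}\,\boldsymbol{i}}=-\overline{\boldsymbol{i}}\,a\,\overline{\boldsymbol{i}}=-\boldsymbol{i}a\boldsymbol{i}$, which is the $\boldsymbol{i}$-transpose entry of $\overline{A}$.
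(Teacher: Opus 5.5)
Your proposal is correct and matches the paper's intended argument: the paper gives no separate proof of this corollary and treats it as a direct consequence of the identities in Lemma~\ref{haoduo}, which is exactly the verification you carry out. Each operation is an involution commuting with $X\mapsto X^{*\boldsymbol{i}}$, the cited parts (1), (2), (7), (8), (9) do hold, and the skew case goes through by real-linearity together with $(-X)^{-1}=-X^{-1}$.
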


\begin{corollary}\label{111}
Let $A$ be a matrix in $M_n(\mathbb{H})$. If $A$ is an $\boldsymbol{i}$-Hermitian matrix, then all the right $\boldsymbol{i}$-eigenvalues of $A$ can be $\boldsymbol{i}$-similar to real numbers. If $A$ is a skew $\boldsymbol{i}$-Hermitian matrix, then all the right $\boldsymbol{i}$-eigenvalues of $A$ are zero or have the form $t\boldsymbol{i}$ for some real number $t$.
\end{corollary}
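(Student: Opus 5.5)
The plan is to pass from $A$ to $\boldsymbol{i}A$ and use Lemma \ref{miemie}(1), which says that $\lambda$ is a right $\boldsymbol{i}$-eigenvalue of $A$ if and only if $\boldsymbol{i}\lambda$ is a right eigenvalue of $\boldsymbol{i}A$. The key observation is that $\boldsymbol{i}$-Hermiticity (resp. skew $\boldsymbol{i}$-Hermiticity) of $A$ becomes ordinary skew-Hermiticity (resp. Hermiticity) of $\boldsymbol{i}A$.

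\textbf{Step 1: $\boldsymbol{i}$-Hermitian case.} If $A=A^{*\boldsymbol{i}}=-\boldsymbol{i}A^*\boldsymbol{i}$, multiplying on the left by $\boldsymbol{i}$ gives $\boldsymbol{i}A=A^*\boldsymbol{i}$. Hence
$$(\boldsymbol{i}A)^*=A^*\overline{\boldsymbol{i}}=-A^*\boldsymbol{i}=-\boldsymbol{i}A,$$
so $\boldsymbol{i}A$ is skew-Hermitian. In the skew case, $A=-A^{*\boldsymbol{i}}$ gives $\boldsymbol{i}A=-A^*\boldsymbol{i}$, and the same computation gives $(\boldsymbol{i}A)^*=\boldsymbol{i}A$, so $\boldsymbol{i}A$ is Hermitian.

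\textbf{Step 2: where right eigenvalues of (skew-)Hermitian matrices lie.} Let $H\boldsymbol{x}=\boldsymbol{x}\mu$ with $\boldsymbol{x}\neq 0$. Then
$$\boldsymbol{x}^*H\boldsymbol{x}=\boldsymbol{x}^*\boldsymbol{x}\,\mu=\|\boldsymbol{x}\|^2\mu .$$
The quaternion $\boldsymbol{x}^*H\boldsymbol{x}$ satisfies $\overline{\boldsymbol{x}^*H\boldsymbol{x}}=\boldsymbol{x}^*H^*\boldsymbol{x}$. So it is real when $H^*=H$, and purely imaginary ($\mathrm{Re}=0$) when $H^*=-H$. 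Consequently every right eigenvalue $\mu$ of a Hermitian matrix is real, and every right eigenvalue of a skew-Hermitian matrix has zero real part.

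\textbf{Step 3: translate back.} Write $\lambda=\lambda_0+\lambda_1\boldsymbol{i}+\lambda_2\boldsymbol{j}+\lambda_3\boldsymbol{k}$. Then $\mathrm{Re}(\boldsymbol{i}\lambda)=-\lambda_1$.

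In the $\boldsymbol{i}$-Hermitian case, Steps 1–2 and Lemma \ref{miemie} give $\mathrm{Re}(\boldsymbol{i}\lambda)=0$, so $\lambda_1=0$. Proposition \ref{QISR} then gives $\lambda\sim_{\boldsymbol{i}}|\lambda|$, since both have the same modulus and $\boldsymbol{i}$-coefficient $0$; this is a real number, as claimed. Lemma \ref{yingwen} gives the same conclusion directly, yielding $a'=|\lambda|$.

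In the skew case, $\boldsymbol{i}\lambda=r$ is real, so $\lambda=-r\boldsymbol{i}$, which is either $0$ or of the form $t\boldsymbol{i}$.

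\textbf{Main obstacle.} The only delicate point is Step 2 in the quaternion setting. One must keep $\mu$ on the right, so that $\boldsymbol{x}^*\boldsymbol{x}\mu$ is a real scalar times $\mu$, and use that $\overline{\boldsymbol{x}^*H\boldsymbol{x}}=\boldsymbol{x}^*H^*\boldsymbol{x}$ holds for quaternion vectors. Once this is checked, the rest is bookkeeping with Lemma \ref{miemie} and Proposition \ref{QISR}.
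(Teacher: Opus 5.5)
Your argument is correct, and every step checks out: $(\boldsymbol{i}A)^{*}=-A^{*}\boldsymbol{i}$, the Rayleigh-quotient identity $\boldsymbol{x}^{*}H\boldsymbol{x}=\|\boldsymbol{x}\|^{2}\mu$ (with $\mu$ kept on the right), and $\mathrm{Re}(\boldsymbol{i}\lambda)=-\lambda_1$. It does, however, follow a different route from the one the paper has in mind.

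The paper gives no written proof. The corollary sits directly after Theorem \ref{RUISIN} and the remark that (skew) $\boldsymbol{i}$-Hermitian matrices are $\boldsymbol{i}$-conjugate normal. The proof of Theorem \ref{11251} then uses it in the form ``the diagonal entries of $U^{*}A\overline{U}^{\boldsymbol{i}}$ can be taken real and nonnegative.'' The intended argument therefore runs as follows.
\begin{enumerate}
\item Diagonalize $A$ by unitary $\boldsymbol{i}$-congruence, $T=VAV^{*\boldsymbol{i}}$.
\item Since $(VAV^{*\boldsymbol{i}})^{*\boldsymbol{i}}=VA^{*\boldsymbol{i}}V^{*\boldsymbol{i}}$, the matrix $T$ is again (skew) $\boldsymbol{i}$-Hermitian.
\item Hence each diagonal entry $t$ satisfies $t=\pm t^{*\boldsymbol{i}}=\pm(-\boldsymbol{i}\,\overline{t}\,\boldsymbol{i})$. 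This forces the $\boldsymbol{i}$-coefficient of $t$ to vanish (resp.\ $t\in\mathbb{R}\boldsymbol{i}$).
\item To finish, use three facts: right $\boldsymbol{i}$-eigenvalues are invariant under $\boldsymbol{i}$-similarity; the right $\boldsymbol{i}$-eigenvalues of a diagonal matrix are the quaternions $\boldsymbol{i}$-similar to its diagonal entries; and Lemma \ref{yingwen}.
\end{enumerate}

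Your eigenpair-by-eigenpair argument is shorter and more elementary. It bypasses the spectral theorem hidden in Theorem \ref{RUISIN}, namely normality of $\boldsymbol{i}A$ and its unitary diagonalization. It also avoids identifying the right $\boldsymbol{i}$-eigenvalues of the diagonal form. It needs only Lemma \ref{miemie} and the fact that Hermitian (resp.\ skew-Hermitian) quaternion matrices have real (resp.\ purely imaginary) right eigenvalues.

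In exchange, the paper's route gives structural information beyond the eigenvalue statement. It produces a diagonal canonical form under unitary $\boldsymbol{i}$-congruence whose entries can be made nonnegative reals. That is exactly what is needed later in Theorem \ref{11251} and Corollary \ref{UIF}.
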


\begin{corollary}\label{916}
Let $A$ be an $\boldsymbol{i}$-conjugate normal matrix in $M_n(\mathbb{H})$. If $\lambda$ is the right $\boldsymbol{i}$-eigenvalue of $A$ and $\boldsymbol{x}$ is the right $\boldsymbol{i}$-eigenvector of $A$ with respect to $\lambda$, then $\overline{ \lambda }^{\boldsymbol{i}}$ and $\overline{ \lambda }$ are right $\boldsymbol{i}$-eigenvalues of $\overline{ A }^{\boldsymbol{i}}$ and $A^*$ respectively, $\overline{ \boldsymbol{x} }^{\boldsymbol{i}}$ is a right $\boldsymbol{i}$-eigenvector of $\overline{ A }^{\boldsymbol{i}}$ and $A^*$ with respect to $\overline{ \lambda }^{\boldsymbol{i}}$ and $\overline{\lambda}$, respectively.
\end{corollary}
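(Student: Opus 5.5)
The plan is to reduce both assertions to the ordinary normality of a related matrix, using the observation from the proof of Theorem~\ref{RUISIN} that $A$ is $\boldsymbol{i}$-conjugate normal if and only if $\boldsymbol{i}A$ is normal. Throughout, we start from $A\overline{\boldsymbol{x}}^{\boldsymbol{i}}=\boldsymbol{x}\lambda$ with $\boldsymbol{x}\neq 0$.

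\emph{The statement for $\overline{A}^{\boldsymbol{i}}$.} This part is purely formal and needs no normality. We apply the $\boldsymbol{i}$-conjugate to both sides of $A\overline{\boldsymbol{x}}^{\boldsymbol{i}}=\boldsymbol{x}\lambda$. By Lemma~\ref{haoduo} (items (1), (4) and (6)), the $\boldsymbol{i}$-conjugate is multiplicative and involutive. This gives
$\overline{A}^{\boldsymbol{i}}\,\boldsymbol{x}=\overline{\boldsymbol{x}}^{\boldsymbol{i}}\,\overline{\lambda}^{\boldsymbol{i}}$. Since $\boldsymbol{x}=\overline{(\overline{\boldsymbol{x}}^{\boldsymbol{i}})}^{\boldsymbol{i}}$, this is exactly the statement that $\overline{\boldsymbol{x}}^{\boldsymbol{i}}\neq0$ is a right $\boldsymbol{i}$-eigenvector of $\overline{A}^{\boldsymbol{i}}$ with right $\boldsymbol{i}$-eigenvalue $\overline{\lambda}^{\boldsymbol{i}}$.

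\emph{The statement for $A^*$.} Here we must show $A^*\boldsymbol{x}=\overline{\boldsymbol{x}}^{\boldsymbol{i}}\,\overline{\lambda}$.

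First rewrite the hypothesis. Since $\overline{\boldsymbol{x}}^{\boldsymbol{i}}=-\boldsymbol{i}\boldsymbol{x}\boldsymbol{i}$, multiplying $A\overline{\boldsymbol{x}}^{\boldsymbol{i}}=\boldsymbol{x}\lambda$ on the right by $\boldsymbol{i}$ gives $(A\boldsymbol{i})\boldsymbol{x}=\boldsymbol{x}(\lambda\boldsymbol{i})$. Thus $\boldsymbol{x}$ is an ordinary right eigenvector of $N:=A\boldsymbol{i}$ for the eigenvalue $\mu=\lambda\boldsymbol{i}$ (compare Lemma~\ref{miemie}).

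Next, $N$ is normal. Indeed $N=(-\boldsymbol{i}I_n)(\boldsymbol{i}A)(\boldsymbol{i}I_n)$ is unitarily similar to $\boldsymbol{i}A$, and $\boldsymbol{i}A$ is normal because $A$ is $\boldsymbol{i}$-conjugate normal.

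Then we prove the standard fact that for a normal quaternion $N$, $N\boldsymbol{x}=\boldsymbol{x}\mu$ implies $N^*\boldsymbol{x}=\boldsymbol{x}\overline{\mu}$. To do this, expand $\|N^*\boldsymbol{x}-\boldsymbol{x}\overline{\mu}\|^2$ and use $NN^*=N^*N$ together with $\boldsymbol{x}^*N\boldsymbol{x}=(\boldsymbol{x}^*\boldsymbol{x})\mu$. The point is that $\boldsymbol{x}^*\boldsymbol{x}$ is real and hence commutes with $\mu$, so the expansion collapses to $\|N\boldsymbol{x}-\boldsymbol{x}\mu\|^2=0$.

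Finally, unwind. We have $N^*=\overline{\boldsymbol{i}}A^*=-\boldsymbol{i}A^*$ and $\overline{\mu}=\overline{\lambda\boldsymbol{i}}=-\boldsymbol{i}\overline{\lambda}$. So $-\boldsymbol{i}A^*\boldsymbol{x}=-\boldsymbol{x}\boldsymbol{i}\overline{\lambda}$. Multiplying on the left by $\boldsymbol{i}$ yields $A^*\boldsymbol{x}=-\boldsymbol{i}\boldsymbol{x}\boldsymbol{i}\,\overline{\lambda}=\overline{\boldsymbol{x}}^{\boldsymbol{i}}\,\overline{\lambda}$, as required.

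The main obstacle is bookkeeping rather than ideas. Quaternion noncommutativity makes sign and side errors easy when moving $\boldsymbol{i}$ past $\boldsymbol{x}$ and $\lambda$, so each step must keep track of left versus right multiplication carefully; a slip here produces $-\overline{\lambda}$ instead of $\overline{\lambda}$. The one genuinely structural point is the normal-eigenvector lemma, where the key is that $\boldsymbol{x}^*\boldsymbol{x}$ is real.
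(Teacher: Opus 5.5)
Your proof is correct. The $\overline{A}^{\boldsymbol{i}}$ part is the formal remark the paper makes right after defining right $\boldsymbol{i}$-eigenvalues, and for the $A^*$ part you reduce, via Lemma~\ref{miemie} and the observation in the proof of Theorem~\ref{RUISIN}, to the normality of $N=A\boldsymbol{i}$ (indeed $NN^*=AA^*$ and $N^*N=\overline{A^*A}^{\boldsymbol{i}}$, so this is literally the definition) plus the standard normal-eigenvector fact, and your sign bookkeeping $A^*\boldsymbol{x}=\overline{\boldsymbol{x}}^{\boldsymbol{i}}\,\overline{\lambda}$ checks out. The paper states this corollary without a written proof, so your argument supplies exactly the reduction its surrounding results point to.
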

\begin{corollary}
Let $A\in M_{n}(\mathbb{H})$ be an $\boldsymbol{i}$-conjugate normal matrix, suppose that $\lambda$ and $\mu$ are right $\boldsymbol{i}$-eigenvalues of $A$ and $\lambda$ is not $\boldsymbol{i}$-similar to $\mu$. If $\boldsymbol{x}$ and $\boldsymbol{y}$ are right $\boldsymbol{i}$-eigenvectors of $A$ with respect to $\lambda$ and $\mu$, that is,
$$
A\overline{\boldsymbol{x}}^{\boldsymbol{i}}=\boldsymbol{x}\lambda,\ \ A\overline{\boldsymbol{y}}^{\boldsymbol{i}}=\boldsymbol{y}\mu,
$$
then the vectors $\boldsymbol{x}$ and $\boldsymbol{y}$ are orthogonal.
\end{corollary}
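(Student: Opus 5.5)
Since $A$ is $\boldsymbol{i}$-conjugate normal exactly when $\boldsymbol{i}A$ is normal (as noted in the proof of Theorem \ref{RUISIN}), the plan is to reduce to the classical orthogonality of eigenvectors of a normal quaternion matrix. By Lemma \ref{miemie}(1), the relations $A\overline{\boldsymbol{x}}^{\boldsymbol{i}}=\boldsymbol{x}\lambda$ and $A\overline{\boldsymbol{y}}^{\boldsymbol{i}}=\boldsymbol{y}\mu$ become right-eigen relations for $\boldsymbol{i}A$. Concretely, $A\overline{\boldsymbol{x}}^{\boldsymbol{i}}=-A\boldsymbol{i}\boldsymbol{x}\boldsymbol{i}$, so multiplying on the left by $\boldsymbol{i}$ and on the right by $\boldsymbol{i}$ gives $(\boldsymbol{i}A)(\boldsymbol{i}\boldsymbol{x})=(\boldsymbol{i}\boldsymbol{x})(\lambda\boldsymbol{i})$, up to the sign conventions, which I would check directly. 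So $\boldsymbol{u}=\boldsymbol{i}\boldsymbol{x}$ is a right eigenvector of the normal matrix $N=\boldsymbol{i}A$ for the eigenvalue $\lambda\boldsymbol{i}$ (or $-\lambda\boldsymbol{i}$). Likewise $\boldsymbol{v}=\boldsymbol{i}\boldsymbol{y}$ is a right eigenvector for $\mu\boldsymbol{i}$.

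Next I would translate the hypothesis. By Lemma \ref{RIST}(2), applied to $1\times1$ matrices, $\lambda\sim_{\boldsymbol{i}}\mu$ iff $\lambda\boldsymbol{i}\sim\mu\boldsymbol{i}$ (here $\boldsymbol{i}\lambda\sim\lambda\boldsymbol{i}$ via conjugation by $\boldsymbol{i}$). Hence the right eigenvalues $\lambda\boldsymbol{i}$ and $\mu\boldsymbol{i}$ of $N$ lie in distinct similarity classes.

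For the main step, I would prove that eigenvectors of a normal quaternion matrix belonging to non-similar right eigenvalues are orthogonal. First, normality gives $\|N\boldsymbol{w}\|=\|N^*\boldsymbol{w}\|$ for all $\boldsymbol{w}$. Applying this to $N-\boldsymbol{u}\alpha\boldsymbol{u}^*/\|\boldsymbol{u}\|^2$ is awkward, so I would instead use the unitary diagonalization of $N$ from the corollary cited from \cite{Z}: $N=W\,\mathrm{diag}(d_1,\dots,d_n)W^*$ with complex $d_s$. Writing $\boldsymbol{u}=W\boldsymbol{a}$, the equation $d_s a_s=a_s\lambda\boldsymbol{i}$ forces $a_s=0$ unless $d_s\sim\lambda\boldsymbol{i}$. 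The analogous statement holds for $\boldsymbol{b}=W^*\boldsymbol{v}$ with $\mu\boldsymbol{i}$. Since similarity classes are disjoint, the supports of $\boldsymbol{a}$ and $\boldsymbol{b}$ are disjoint, so $\boldsymbol{u}^*\boldsymbol{v}=\boldsymbol{a}^*\boldsymbol{b}=0$.

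Finally, $\boldsymbol{x}^*\boldsymbol{y}=(-\boldsymbol{i}\boldsymbol{u})^*(-\boldsymbol{i}\boldsymbol{v})=\boldsymbol{u}^*\boldsymbol{v}=0$, since $\overline{\boldsymbol{i}}\,\boldsymbol{i}=1$ and left multiplication by the scalar $-\boldsymbol{i}$ on all entries preserves the inner product. I expect the main obstacle to be keeping the left/right placement of $\boldsymbol{i}$ and the signs straight in the first step. The support argument itself is routine.
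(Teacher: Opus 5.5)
Your proof is correct, and it takes a genuinely different route from the paper's.

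\textbf{The paper's route.} The paper stays inside the $\boldsymbol{i}$-calculus. It takes from Corollary~\ref{916} the ``adjoint'' relation $A^{*\boldsymbol{i}}\overline{\boldsymbol{y}}^{\boldsymbol{i}}=\boldsymbol{y}\overline{\overline{\mu}}^{\boldsymbol{i}}$. From this it derives $c\lambda=\mu\overline{c}^{\boldsymbol{i}}$ with $c=\boldsymbol{y}^{*}\boldsymbol{x}$. So $c\neq 0$ would give the explicit $\boldsymbol{i}$-similarity $\lambda=c^{-1}\mu\bigl(\overline{c^{-1}}^{\boldsymbol{i}}\bigr)^{-1}$, a contradiction.

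\textbf{Your route.} You transport the problem to the normal matrix $N=\boldsymbol{i}A$ via $\boldsymbol{u}=\boldsymbol{i}\boldsymbol{x}$. Your identity $(\boldsymbol{i}A)(\boldsymbol{i}\boldsymbol{x})=(\boldsymbol{i}\boldsymbol{x})(\lambda\boldsymbol{i})$ holds exactly, so no sign hedging is needed. You translate the hypothesis to $\lambda\boldsymbol{i}\not\sim\mu\boldsymbol{i}$, and then finish with the unitary diagonalization of $N$ and a disjoint-support argument. All of these steps check out.

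\textbf{What each buys.} The paper's proof is shorter, but it rests on the $A^*$ part of Corollary~\ref{916}, which the paper states without proof. Your proof needs only the spectral theorem for normal quaternion matrices. Your reduction would in fact also supply that missing justification. Write $\alpha=\lambda\boldsymbol{i}$; from $d_sa_s=a_s\alpha$ one gets $\overline{d_s}\,a_s=a_s\overline{\alpha}$, hence $N^{*}\boldsymbol{u}=\boldsymbol{u}\overline{\alpha}$. Since $N^{*}\boldsymbol{u}=A^{*}\boldsymbol{x}$ and $\boldsymbol{u}\overline{\alpha}=\overline{\boldsymbol{x}}^{\boldsymbol{i}}\overline{\lambda}$, this is exactly $A^{*}\boldsymbol{x}=\overline{\boldsymbol{x}}^{\boldsymbol{i}}\overline{\lambda}$, the relation the paper uses. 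In exchange, the paper's computation exhibits the $\boldsymbol{i}$-similarity directly. Yours instead shows structurally that right $\boldsymbol{i}$-eigenvectors live in mutually orthogonal spectral subspaces indexed by $\boldsymbol{i}$-similarity classes.
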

\begin{proof}
By Corollary \ref{916}, one can see that
$$
A^{*\boldsymbol{i}}\overline{\boldsymbol{y}}^{\boldsymbol{i}}=\boldsymbol{y}\overline{\overline{\mu}}^{\boldsymbol{i}},
$$
then
$$
(\boldsymbol{y}^{*}\boldsymbol{x})\lambda=\boldsymbol{y}^{*}(A\overline{\boldsymbol{x}}^{\boldsymbol{i}})=(A^{*\boldsymbol{i}}\overline{\boldsymbol{y}}^{\boldsymbol{i}})^{*\boldsymbol{i}}\overline{\boldsymbol{x}}^{\boldsymbol{i}}
=(\boldsymbol{y}\overline{\overline{\mu}}^{\boldsymbol{i}})^{*\boldsymbol{i}}\overline{\boldsymbol{x}}^{\boldsymbol{i}}
=\mu(\boldsymbol{y}^{*\boldsymbol{i}}\overline{\boldsymbol{x}}^{\boldsymbol{i}}).
$$
Suppose that  $\boldsymbol{x}$ and $\boldsymbol{y}$ are not orthogonal, that is, $\boldsymbol{y}^{*}\boldsymbol{x}\neq 0$, then we deduce $\lambda$ is $\boldsymbol{i}$-similar to $\mu$ by the equation above, which is in contradiction to our assumption. Hence
$\langle \boldsymbol{x},\boldsymbol{y} \rangle=\boldsymbol{y}^{*}\boldsymbol{x}=0$.
\end{proof}

We obtain a decomposition of an $\boldsymbol{i}$-Hermitian matrix in $M_n(\mathbb{H})$.
\begin{theorem}\label{11251}
Let $A\in M_{n}(\mathbb{H})$, then $A$ is an $\boldsymbol{i}$-Hermitian matrix in $M_n(\mathbb{H})$ if and only if there exists a matrix $P \in M_n(\mathbb{H})$ such that
$A = \overline{ P }^{\boldsymbol{i}}  P^*$.
\end{theorem}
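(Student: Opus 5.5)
The plan is to verify sufficiency by a direct computation with the rules of Lemma \ref{haoduo}. For necessity, I will use the spectral theorem for normal quaternion matrices (the corollary from \cite{Z} quoted above) applied to $\boldsymbol{i}A$, and then split off square roots of the eigenvalues.

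\emph{Sufficiency.} Suppose $A=\overline{P}^{\boldsymbol{i}}P^*$. By Lemma \ref{haoduo}(6),
$$A^{*\boldsymbol{i}}=(P^*)^{*\boldsymbol{i}}(\overline{P}^{\boldsymbol{i}})^{*\boldsymbol{i}}.$$
Directly from the definition, $(P^*)^{*\boldsymbol{i}}=-\boldsymbol{i}P\boldsymbol{i}=\overline{P}^{\boldsymbol{i}}$. By Lemma \ref{haoduo}(7), $(\overline{P}^{\boldsymbol{i}})^{*\boldsymbol{i}}=P^*$. Hence $A^{*\boldsymbol{i}}=\overline{P}^{\boldsymbol{i}}P^*=A$, so $A$ is $\boldsymbol{i}$-Hermitian by Definition \ref{10221}.

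\emph{Necessity.} Assume $\overline{A}^{\boldsymbol{i}}=A^*$, i.e. $-\boldsymbol{i}A\boldsymbol{i}=A^*$. Multiplying on the right by $\boldsymbol{i}$ gives $A^*\boldsymbol{i}=\boldsymbol{i}A$. Then
$$(\boldsymbol{i}A)^*=-A^*\boldsymbol{i}=-\boldsymbol{i}A,$$
so $\boldsymbol{i}A$ is skew-Hermitian and in particular normal. By the corollary from \cite{Z}, there is a unitary $U$ with
$$U^*(\boldsymbol{i}A)U=\mathrm{diag}(h_1+k_1\boldsymbol{i},\dots,h_n+k_n\boldsymbol{i}),\qquad k_s\ge 0.$$
The diagonal matrix is itself skew-Hermitian, which forces $h_s=0$. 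So $\boldsymbol{i}A=U D U^*$ with $D=\mathrm{diag}(k_1\boldsymbol{i},\dots,k_n\boldsymbol{i})=\boldsymbol{i}K$, where $K=\mathrm{diag}(k_1,\dots,k_n)$ is real and nonnegative.

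It follows that $A=-\boldsymbol{i}U\boldsymbol{i}KU^*$. Since $\overline{U}^{\boldsymbol{i}}=-\boldsymbol{i}U\boldsymbol{i}$, this is exactly $A=\overline{U}^{\boldsymbol{i}}KU^*$. Now set $P=UK^{1/2}$, where $K^{1/2}=\mathrm{diag}(\sqrt{k_1},\dots,\sqrt{k_n})$. Because $K^{1/2}$ is real, it commutes with $\boldsymbol{i}$, so $\overline{P}^{\boldsymbol{i}}=\overline{U}^{\boldsymbol{i}}K^{1/2}$ and $P^*=K^{1/2}U^*$. Therefore $\overline{P}^{\boldsymbol{i}}P^*=\overline{U}^{\boldsymbol{i}}KU^*=A$, as required.

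The main obstacle is the sign bookkeeping: checking that $\boldsymbol{i}A$ is skew-Hermitian rather than Hermitian, and that the leftover factor $-\boldsymbol{i}\,\cdot\,\boldsymbol{i}$ recombines exactly into $\overline{U}^{\boldsymbol{i}}$. It is also essential that the eigenvalues can be taken as $k_s\boldsymbol{i}$ with $k_s\ge 0$, since this is what allows the real square roots in $K^{1/2}$. If sign trouble arose, the fallback would be to work with $A\boldsymbol{i}$ instead, which is also skew-Hermitian, and proceed in the same way.
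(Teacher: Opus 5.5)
Your proof is correct and follows essentially the paper's route. The paper invokes Theorem \ref{RUISIN}, whose proof is exactly the normality of $\boldsymbol{i}A$ plus the quaternion spectral theorem, to make $U^{*}A\overline{U}^{\boldsymbol{i}}$ a nonnegative real diagonal matrix, and then sets $P=\overline{U}^{\boldsymbol{i}}\,\mathrm{diag}(\sqrt{\lambda_1},\dots,\sqrt{\lambda_n})$; this is your $P=UK^{1/2}$ after interchanging the names $U\leftrightarrow\overline{U}^{\boldsymbol{i}}$. Your observation that $\boldsymbol{i}A$ is skew-Hermitian, so its eigenvalues are $k_s\boldsymbol{i}$ with $k_s\ge 0$, supplies the justification for the nonnegativity of the diagonal entries, which the paper only asserts.
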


\begin{proof}
 On the one hand, if $A = \overline{ P }^{\boldsymbol{i}}  P^*$, it is easy to check that $\overline{ A}^{\boldsymbol{i}} =A^{*}$, so $A$ is $\boldsymbol{i}$-Hermitian.

   On the other hand, since $A$ is an $\boldsymbol{i}$-Hermitian matrix, then $A$ is unitarily $\boldsymbol{i}$-congruent to a diagonal matrix by Theorem \ref{RUISIN}, that is
    $U^{*}A\overline{ U }^{\boldsymbol{i}} = \mathrm{diag} (\lambda_1, \cdots, \lambda_n)$, where $\lambda_s$ is real number, especially, non-negative real number. Set $t_k=\sqrt{\lambda_k}$. Let $P = \overline{ U }^{\boldsymbol{i}} \mathrm{diag} (t_1, \cdots, t_n)$, then $\overline{ P }^{\boldsymbol{i}} P^* = U \mathrm{diag} (\lambda_1, \cdots, \lambda_n) (\overline{ U }^{\boldsymbol{i}})^{*} = A$.
 \end{proof}
\begin{remark}
Following from Theorem \ref{11251}, we know that the unitary matrices $P$ in Theorem \ref{QC1} and Theorem \ref{10242} are $\boldsymbol{i}$-Hermitian.
\end{remark}
\subsection{Quaternion matrix triangularization under unitary i-congruence}
In this section, we provide a decomposition of a class of special quaternion matrices. The main Theorem is Theorem \ref{3.3}, and the quaternion Takagi's factorization is obtained as a corollary of Theorem \ref{3.3}, which has been studied by F. Zhang in \cite{HZ2012}. Firstly, we propose some known results that will be used in the proof of our conclusion.
\begin{theorem}[\cite{B}]\label{6.10}
If $A\in M_n(\mathbb{H})$ is in triangular form, then every diagonal element is a (right) eigenvalue of $A$. Conversely, every (right) eigenvalue of $A$
is similar to a diagonal entry of $A$.
\end{theorem}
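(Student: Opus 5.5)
The plan is a proof by induction on $n$, working directly with right eigenvalues and eigenvectors. The case $n=1$ is immediate, since a $1\times 1$ matrix is triangular and its only entry is its right eigenvalue. For the first assertion, let $A=(a_{st})$ be upper triangular (the lower triangular case follows by applying the argument to $A^{T}$, or by reversing the order of coordinates). The key tool is the Sylvester-type equation solvability quoted before Lemma \ref{815}: if $a$ and $c$ are not similar, then $ax+b=xc$ has a solution for every $b$.

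\emph{Every diagonal entry is a right eigenvalue.} Fix an index $k$. I will build an eigenvector $\boldsymbol{x}=(x_1,\dots,x_k,0,\dots,0)^T$ with $x_k=1$ and $A\boldsymbol{x}=\boldsymbol{x}a_{kk}$.
\begin{enumerate}
\item Take $x_k=1$ and $x_s=0$ for $s>k$. Then row $k$ reads $a_{kk}\cdot 1=1\cdot a_{kk}$, which holds.
\item Solve for $x_{k-1},\dots,x_1$ by back substitution. Row $s<k$ requires
$$a_{ss}x_s+\sum_{t=s+1}^{k}a_{st}x_t=x_s a_{kk}.$$
If $a_{ss}\not\sim a_{kk}$, the equation $a_{ss}x_s+b=x_sa_{kk}$ is solvable for any $b$.
\item If $a_{ss}\sim a_{kk}$, a solution may fail to exist. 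To avoid this, I choose $k$ to be the \emph{smallest} index whose diagonal entry lies in the similarity class of $a_{kk}$. The argument above then shows that some eigenvalue similar to $a_{kk}$ exists.
\item Since the right eigenvalues are closed under similarity, $a_{kk}$ itself is a right eigenvalue.
\end{enumerate}

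\emph{Conversely, every right eigenvalue is similar to a diagonal entry.} Suppose $A\boldsymbol{x}=\boldsymbol{x}\lambda$ with $\boldsymbol{x}\ne 0$, and let $m$ be the largest index with $x_m\neq 0$. Row $m$ gives $a_{mm}x_m=x_m\lambda$, so $\lambda=x_m^{-1}a_{mm}x_m$, which is similar to $a_{mm}$.

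Alternatively, this direction can be derived via the complex representation. $A_{\mathbb{C}}$ is permutation similar to a block triangular matrix with $2\times2$ diagonal blocks $(a_{ss})_{\mathbb{C}}$, so its characteristic polynomial factors accordingly, and Theorem \ref{eng} identifies the eigenvalue classes. I would present the elementary argument and keep this only as a cross-check.

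The main obstacle is the back-substitution step when some $a_{ss}$ with $s<k$ is similar to $a_{kk}$. Choosing the minimal index within the similarity class resolves it, and the closure of right eigenvalues under similarity then finishes the argument.
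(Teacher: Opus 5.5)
Your proof is correct and uses essentially the argument the paper has in mind. The paper only cites this result from Brenner \cite{B}, but its remark after Lemma \ref{815} points to the same tool you use: back substitution via the solvability of $ax+b=xc$ for non-similar $a,c$, started at the first diagonal index in the similarity class of $a_{kk}$, with the converse read off from the last nonzero coordinate of an eigenvector.

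One small correction: drop the option of reducing the lower triangular case to $A^{T}$. Over $\mathbb{H}$ transposition does not preserve right eigenvalues; for example, $\begin{pmatrix}1&\boldsymbol{i}\\ \boldsymbol{j}&\boldsymbol{k}\end{pmatrix}$ is invertible while its transpose is singular. Your other option, conjugating by the real reversal permutation so that the result is upper triangular and similar to $A$, is valid.
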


\begin{theorem}[\cite{Z}]\label{6.11}
Let $u_1$ be a unit column vector of $n$ quaternion components. Then there exist $n-1$ unit column vectors $u_2,\cdots,u_n$ of $n$ column
components such that $\{u_1, u_2, \cdots, u_n\}$ is an orthogonal set, i.e., $u_s^*u_t=0,\ s\ne t$.
\end{theorem}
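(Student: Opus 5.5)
Theorem \ref{6.11} asks us to extend a single unit vector $u_1\in\mathbb{H}^n$ to an orthonormal family $\{u_1,\dots,u_n\}$ with respect to the quaternionic inner product $\langle x,y\rangle=y^*x$. The plan is to imitate the complex Gram--Schmidt procedure, keeping careful track of the side on which scalars act. Throughout, $\mathbb{H}^n$ is regarded as a right vector space over $\mathbb{H}$, so that $y^*(xa)=(y^*x)a$.

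\textbf{Step 1: extend to a right basis.} Choose vectors so that $\{u_1,v_2,\dots,v_n\}$ is a right basis of $\mathbb{H}^n$. Since $u_1\neq 0$, some coordinate $(u_1)_m$ is nonzero. Take the $v$'s to be the standard vectors $e_s$ with $s\neq m$. Right linear independence then reduces to an elementary check: a relation $u_1a+\sum_{s\neq m}e_sb_s=0$ has $m$-th coordinate $(u_1)_ma=0$, so $a=0$, and then every $b_s=0$.

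\textbf{Step 2: Gram--Schmidt.} Define the vectors inductively by
$$w_k=v_k-\sum_{s<k}u_s\,(u_s^*v_k),\qquad u_k=w_k\,|w_k|^{-1},$$
where $|w|=\sqrt{w^*w}$. Orthogonality follows from $u_t^*w_k=u_t^*v_k-\sum_{s<k}(u_t^*u_s)(u_s^*v_k)=u_t^*v_k-u_t^*v_k=0$ for $t<k$. This computation uses the induction hypothesis $u_t^*u_s=\delta_{ts}$ together with the fact that the scalar coefficient sits on the right.

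\textbf{Step 3: the main obstacle.} The step needing care is to confirm that $w^*w$ is a nonnegative real number that vanishes only when $w=0$. This holds because $w^*w=\sum|w_s|^2$. It follows that each $w_k\neq0$: each $u_s$ with $s<k$ lies in the right span of $u_1,v_2,\dots,v_s$, so $w_k=0$ would express $v_k$ as a right combination of $u_1,v_2,\dots,v_{k-1}$, contradicting Step 1. The real scalar $|w_k|^{-1}$ commutes with everything, so $u_k^*u_k=1$. Finally, $u_1$ is unchanged by the process, since it is already a unit vector and is the first vector in the list.
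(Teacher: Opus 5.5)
The paper gives no proof of this statement. It is quoted from Zhang \cite{Z} and invoked once, to build the unitary matrix $V_1$ in the proof of Theorem~\ref{3.3}, so there is no in-paper argument to compare against.

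Your quaternionic Gram--Schmidt argument is correct and self-contained. The key points all check out:
\begin{itemize}
\item In Step 1, the list $\{u_1\}\cup\{e_s : s\neq m\}$ is right linearly independent, because $(u_1)_m$ is invertible in $\mathbb{H}$.
\item You place the projection coefficients $u_s^*v_k$ on the right of $u_s$. Associativity then gives $u_t^*(u_sc)=(u_t^*u_s)c$, which makes the orthogonality computation work.
\item Every $u_s$ with $s<k$ lies in the right span of $u_1,v_2,\dots,v_s$, so $w_k=0$ would contradict Step 1.
\item Because $|w_k|^{-1}$ is real and hence central, the normalization gives $u_k^*u_k=1$.
\end{itemize}

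The one thing you leave implicit concerns the induction hypothesis. It has to read $u_t^*u_s=\delta_{ts}$ for all $s,t<k$ in both orders, but your computation only produces $u_t^*u_k=0$ for $t<k$. The reverse relation follows from $u_k^*u_t=\overline{u_t^*u_k}=0$. You should say this explicitly, both to keep the induction going and because the theorem asks for $u_s^*u_t=0$ for every pair $s\neq t$.

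Relying on the citation keeps the paper short. Your proof shows that the fact needs nothing beyond associativity of matrix multiplication, $\overline{a}a=|a|^2$, and the centrality of real scalars; no complex representation is required.
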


\begin{theorem}[\cite{HR}]\label{wuwu}
Let $A\in M_{n}({\mathbb{C}})$ be given. Then there exists a unitary $U\in M_{n}({\mathbb{C}})$ and an upper triangular $\Delta\in M_{n}({\mathbb{C}})$ such that $A=U\Delta U^{T}$ if and only if all the eigenvalues of $A\overline{A}$ are real and nonnegative. Under this condition, all the main diagonal entries of $\Delta$ may be chosen to be nonnegative.
\end{theorem}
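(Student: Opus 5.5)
We argue in two directions. Necessity: if $A=U\Delta U^{T}$ with $U$ unitary and $\Delta$ upper triangular, then $\overline{U^{T}}=U^{*}=U^{-1}$ gives
$$A\overline{A}=U\Delta U^{T}\,\overline{U}\,\overline{\Delta}\,U^{*}=U(\Delta\overline{\Delta})U^{*}.$$
So $A\overline{A}$ is unitarily similar to the upper triangular matrix $\Delta\overline{\Delta}$, whose diagonal entries are $|\delta_{kk}|^{2}\ge 0$. Hence every eigenvalue of $A\overline{A}$ is real and nonnegative.

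Sufficiency goes by induction on $n$, following the usual Schur-type deflation adapted to congruence.

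\textbf{Step 1: find a coneigenvector.} Take an eigenvalue $\sigma\ge 0$ of $A\overline{A}$, and let $\sigma=\mu^{2}$ with $\mu\ge 0$. We want a unit vector $x$ with $A\overline{x}=\mu x$. Let $y$ be an eigenvector, so $A\overline{A}y=\mu^{2}y$.
\begin{itemize}
\item If $\mu>0$, consider $x=A\overline{y}+\mu y$. A direct computation gives $A\overline{x}=A\overline{A}y+\mu A\overline{y}=\mu x$. If $x\neq 0$, we normalize it and are done. If $x=0$, then $A\overline{y}=-\mu y$, so $A\overline{(\boldsymbol{i}y)}=\mu(\boldsymbol{i}y)$ and we use $\boldsymbol{i}y$ instead.
\item If $\mu=0$: when $A$ is singular, pick $x$ with $A\overline{x}=0$. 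When $A$ is invertible, $A\overline{A}$ is invertible, so $\mu=0$ cannot occur.
\end{itemize}

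\textbf{Step 2: deflate.} Complete $x$ to a unitary $V=[x\ V_{2}]$. Then $V^{*}A\overline{V}$ has first column $V^{*}A\overline{x}=\mu e_{1}$, so
$$V^{*}A\overline{V}=\begin{pmatrix}\mu & \ast\\ 0 & A_{2}\end{pmatrix}.$$
Equivalently $A=V\begin{pmatrix}\mu&\ast\\0&A_{2}\end{pmatrix}V^{T}$.

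\textbf{Step 3: check the hypothesis passes down.} We have
$$V^{*}(A\overline{A})V=\begin{pmatrix}\mu^{2}&\ast\\0&A_{2}\overline{A_{2}}\end{pmatrix}.$$
So the eigenvalues of $A_{2}\overline{A_{2}}$ are among those of $A\overline{A}$, hence real and nonnegative. By induction, $A_{2}=W\Delta_{2}W^{T}$ with nonnegative diagonal. Setting $U=V(1\oplus W)$ completes the induction and already gives nonnegative diagonal entries $\mu$.

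The main obstacle is Step 1: producing a genuine coneigenvector with a nonnegative real coneigenvalue, especially the degenerate case $x=0$. The $\boldsymbol{i}y$ trick handles it because $\overline{\boldsymbol{i}y}=-\boldsymbol{i}\overline{y}$. The remaining steps are routine block computations.
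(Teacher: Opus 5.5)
Your proof is correct and is essentially the standard Horn--Johnson argument that the paper cites without proof. The paper carries the same argument over to quaternions in the proof of Theorem~\ref{3.3}: build a unit coneigenvector $x$ with $A\overline{x}=\mu x$, $\mu=\sqrt{\sigma}\ge 0$, from $A\overline{y}+\mu y$; deflate by a unitary $V$ whose first column is $x$; note that $V^{*}A\overline{A}V$ is block upper triangular with lower block $A_2\overline{A_2}$; and induct. The only difference is in the degenerate case. There the paper splits on whether $A\overline{y}$ and $y$ are linearly dependent and then rotates the coneigenvalue to $\mu$. You instead replace $y$ by $\boldsymbol{i}y$ when $A\overline{y}=-\mu y$, which reaches the same conclusion slightly more directly.
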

The Lemma below is a special case of Theorem \ref{wuwu}.
\begin{lemma}[see Corollary 4.4.4 in \cite{HR}](Takagi's factorization)\label{12291}
If $A\in M_{n}({\mathbb{C}})$ is symmetric, then there exists a unitary $U\in M_{n}({\mathbb{C}})$ and a real nonnegative diagonal matrix $\Sigma$ such that $A=U\Sigma U^{T}$.
\end{lemma}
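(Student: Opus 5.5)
The statement to prove is Takagi's factorization (Lemma \ref{12291}): a complex symmetric $A$ can be written as $U\Sigma U^{T}$ with $U$ unitary and $\Sigma$ real, nonnegative and diagonal. I plan to deduce it from Theorem \ref{wuwu}, as the paper indicates.

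\emph{Step 1: check the hypothesis of Theorem \ref{wuwu}.} Since $A=A^{T}$, we have $\overline{A}=\overline{A^{T}}=A^{*}$. Hence $A\overline{A}=AA^{*}$, which is Hermitian and positive semidefinite, so all its eigenvalues are real and nonnegative.

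\emph{Step 2: apply Theorem \ref{wuwu}.} It gives a unitary $U\in M_n(\mathbb{C})$ and an upper triangular $\Delta$ with nonnegative diagonal entries such that $A=U\Delta U^{T}$.

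\emph{Step 3: show $\Delta$ is diagonal.} First, $\Delta$ is symmetric, because
$$\Delta=U^{*}A\overline{U}=U^{*}A(U^{*})^{T}$$
and $A$ is symmetric, so $\Delta^{T}=U^{*}A^{T}(U^{*})^{T}=\Delta$. An upper triangular matrix equal to its own transpose is diagonal, since its strictly lower part vanishes and the transpose forces the strictly upper part to vanish too. Therefore $\Delta=\Sigma$ is diagonal, and by Step 2 its entries are real and nonnegative. This gives $A=U\Sigma U^{T}$.

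The only nonroutine point is Step 1, namely recognizing that $A\overline{A}=AA^{*}$ for symmetric $A$. Everything else follows directly from Theorem \ref{wuwu}.
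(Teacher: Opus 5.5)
Your argument is correct and follows the route the paper intends, since the paper presents this lemma as a special case of Theorem \ref{wuwu}: symmetry gives $A\overline{A}=AA^{*}$ positive semidefinite, Theorem \ref{wuwu} yields $A=U\Delta U^{T}$ with nonnegative diagonal, and $\Delta=U^{*}A\overline{U}$ is both symmetric and upper triangular, hence diagonal. This is the same pattern the paper uses in its quaternion analogue, Corollary \ref{UIF}.
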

We will prove that for matrices in $M_{n}(\mathbb{H})$, the similar conclusions of Theorem \ref{wuwu} and Lemma \ref{12291} still hold. First we show two lemmas that are used in the proof.
\begin{lemma}\label{jin}
Let $q$ be a quaternion. Then $q\overline{q}^{\boldsymbol{i}}=s$ for some real nonnegative number $s$ if and only if q can be represented as $q=q_{0}+q_{2}\boldsymbol{j}+q_{3}\boldsymbol{k}$ where $q_{0}, q_{2}, q_{3}\in \mathbb{R}$, that is, the coefficient of $\boldsymbol{i}$ in $q$ is missing.
\end{lemma}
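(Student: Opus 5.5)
Write $q=q_0+q_1\boldsymbol{i}+q_2\boldsymbol{j}+q_3\boldsymbol{k}$ and compute $q\overline{q}^{\boldsymbol{i}}$ explicitly. The plan is a direct calculation followed by reading off the real-nonnegativity condition.

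First, since $\boldsymbol{i}$ commutes with $1,\boldsymbol{i}$ and anticommutes with $\boldsymbol{j},\boldsymbol{k}$, we have
$$\overline{q}^{\boldsymbol{i}}=-\boldsymbol{i}q\boldsymbol{i}=q_0+q_1\boldsymbol{i}-q_2\boldsymbol{j}-q_3\boldsymbol{k}.$$
Set $z=q_0+q_1\boldsymbol{i}$ and $w=q_2+q_3\boldsymbol{i}$, both in $\mathbb{C}$, so that $q=z+w\boldsymbol{j}$ and $\overline{q}^{\boldsymbol{i}}=z-w\boldsymbol{j}$. Using $\boldsymbol{j}c=\overline{c}\boldsymbol{j}$ for $c\in\mathbb{C}$ and $\boldsymbol{j}^2=-1$, we get
$$q\overline{q}^{\boldsymbol{i}}=(z+w\boldsymbol{j})(z-w\boldsymbol{j})=z^2+|w|^2+(wz-zw)\boldsymbol{j}\cdot(\text{check})\ldots$$
More carefully: $z(-w\boldsymbol{j})=-zw\boldsymbol{j}$, and $w\boldsymbol{j}z=w\overline{z}\boldsymbol{j}$. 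Also $-w\boldsymbol{j}w\boldsymbol{j}=-w\overline{w}\boldsymbol{j}^2=|w|^2$. Hence
$$q\overline{q}^{\boldsymbol{i}}=z^2+|w|^2+w(\overline{z}-z)\boldsymbol{j}=\bigl(z^2+|w|^2\bigr)-2q_1\boldsymbol{i}\,w\boldsymbol{j}.$$

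Next we read off the conditions for this to equal a real $s\ge 0$. The $\boldsymbol{j}$-part forces $q_1w=0$, and the complex part forces $z^2+|w|^2=s$, that is, $q_0^2-q_1^2+|w|^2=s$ and $2q_0q_1=0$.

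For the "if" direction, assume $q_1=0$. Then $z=q_0$ is real and $q\overline{q}^{\boldsymbol{i}}=q_0^2+q_2^2+q_3^2=|q|^2\ge 0$, which is real and nonnegative.

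For the "only if" direction, suppose $q_1\neq 0$. Then $w=0$ and $q_0=0$, so $q=q_1\boldsymbol{i}$ and $q\overline{q}^{\boldsymbol{i}}=(q_1\boldsymbol{i})^2=-q_1^2<0$. This contradicts $s\ge 0$, so $q_1=0$.

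The only real obstacle is the sign bookkeeping in the product. The $\boldsymbol{j}$-coefficient must come out as $-2q_1\boldsymbol{i}w$ rather than cancelling, and the scalar part must be $z^2+|w|^2$; once those are right, the case analysis above is immediate. Nonnegativity is what excludes the purely $\boldsymbol{i}$ quaternions, which give a negative real value.
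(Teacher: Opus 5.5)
Your proof is correct and takes essentially the paper's route: split $q$ into complex components along $\boldsymbol{j}$, compute $q\overline{q}^{\boldsymbol{i}}$ as a complex part plus a $\boldsymbol{j}$-part, and observe that the vanishing $\boldsymbol{j}$-part together with nonnegativity forces the $\boldsymbol{i}$-coefficient to be zero. The paper writes $q=a_1+\boldsymbol{j}a_2$ and splits on $a_2=0$ versus $a_2\neq 0$, while you argue by contradiction from $q_1\neq 0$ (which also makes the role of nonnegativity explicit); this is only a cosmetic difference, and you should delete the aborted first display containing ``(check)\ldots'' when writing it up.
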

\begin{proof}

Let $q=a_{1}+\boldsymbol{j}a_{2}$, $a_{1}, a_{2}\in\mathbb{C}$, then $\overline{q}^{\boldsymbol{i}}=a_{1}-\boldsymbol{j}a_{2}$. Assume
$$
q\overline{q}^{\boldsymbol{i}}=a_{1}^{2}+|a_{2}|^{2}+\boldsymbol{j}(a_{2}a_{1}-\overline{a_{1}}a_{2})=s\geq 0.
$$
\begin{enumerate}
\item [1.] If $a_{2}=0$, then $a_{1}^{2}=s$, that is, $a_{1}\in \mathbb{R}$. In this case, $q$ has the desired form.
\item [2.] If $a_{2}\neq 0$, then $a_{1}\in \mathbb{R}$ and $a_{1}^{2}+|a_{2}|^{2}=s$. In this case, $q$ has the desired form either.
\end{enumerate}
The proof of another direction is easy.
\end{proof}

\begin{lemma}\label{jintian}
If a quaternion $q$  satisfies the condition that  $q\overline{q}^{\boldsymbol{i}}=s$ for some real nonnegative number $s$, then there exists a quaternion $p$ such that $pq(\overline{p}^{\boldsymbol{i}})^{-1}=\sqrt{s}$, where $s$ is a nonnegative real number.
\end{lemma}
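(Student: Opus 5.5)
The plan is to reduce the statement to Lemma \ref{jin} followed by Lemma \ref{yingwen}; no new computation should be needed beyond identifying $s$ with $|q|^2$.

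First I will dispose of the degenerate case $s=0$. Writing $q=a_1+\boldsymbol{j}a_2$ with $a_1,a_2\in\mathbb{C}$ as in the proof of Lemma \ref{jin}, the hypothesis forces $a_1\in\mathbb{R}$ and $s=a_1^2+|a_2|^2$. Hence $s=0$ gives $q=0$, and $p=1$ trivially satisfies $pq(\overline{p}^{\boldsymbol{i}})^{-1}=0=\sqrt{s}$.

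Now assume $s>0$. By Lemma \ref{jin}, $q=q_0+q_2\boldsymbol{j}+q_3\boldsymbol{k}$ with $q_0,q_2,q_3\in\mathbb{R}$, so the $\boldsymbol{i}$-coefficient of $q$ is $0$. The computation above also shows $s=q_0^2+q_2^2+q_3^2=|q|^2$. I then apply Lemma \ref{yingwen} to $a=q$, where $a_1=0$. It produces $a'=\sqrt{|q|^2-0}+0\cdot\boldsymbol{i}=\sqrt{s}$ and a nonzero quaternion $p$ with $pq(\overline{p}^{\boldsymbol{i}})^{-1}=\sqrt{s}$, which is exactly the claim. Equivalently, one can invoke Proposition \ref{QISR}: $q$ and $\sqrt{s}$ have the same modulus and the same $\boldsymbol{i}$-coefficient, namely $0$, so $q\sim_{\boldsymbol{i}}\sqrt{s}$, and the proposition even allows a unit $p$.

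For concreteness I will record the explicit choice from Lemma \ref{yingwen}: $p=q_3+q_2\boldsymbol{i}-(\sqrt{s}-q_0)\boldsymbol{k}$ when $q\neq\sqrt{s}$, and $p=1$ when $q=\sqrt{s}$. The only point I expect to need care is that this $p$ is nonzero, so that $\overline{p}^{\boldsymbol{i}}$ is invertible. The vector $p$ vanishes only if $q_2=q_3=0$ and $q_0=\sqrt{s}$, that is only if $q=\sqrt{s}$, which is precisely the case where $p=1$ is used instead. In particular, when $q_2=q_3=0$ and $q_0=-\sqrt{s}<0$, we get $p=-2\sqrt{s}\,\boldsymbol{k}\neq 0$. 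If desired, the identity $pq=\sqrt{s}\,\overline{p}^{\boldsymbol{i}}$ can be checked directly by expanding both sides, using $\overline{p}^{\boldsymbol{i}}=q_3+q_2\boldsymbol{i}+(\sqrt{s}-q_0)\boldsymbol{k}$ and the relation $q_0^2+q_2^2+q_3^2=s$. This expansion is routine.
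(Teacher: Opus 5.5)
Your proof is correct, but it takes a different route from the paper's. The paper never invokes Lemma \ref{yingwen}. Instead it writes $q=a_1+\boldsymbol{j}a_2$ with $a_1\in\mathbb{R}$, $a_2\in\mathbb{C}$, and treats $s=0$ and $a_2=0$ separately, taking $p=1$ or $p=\boldsymbol{j}$ according as $a_1=\pm\sqrt{s}$. For $a_2\neq0$ it takes $p=a_2/(a_1-\sqrt{s})+\boldsymbol{j}$, checks $pq=\sqrt{s}\,\overline{p}^{\boldsymbol{i}}$ by a computation in complex coordinates, and ends with the normalization $p\mapsto p/|p|$.

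You instead observe that once Lemma \ref{jin} removes the $\boldsymbol{i}$-coefficient and gives $s=|q|^2$, the statement is exactly the case $a_1=0$ of Lemma \ref{yingwen}. The two constructions agree up to a harmless factor. In the paper's case $a_2\neq0$, your $p$ equals $(q_0-\sqrt{s})\boldsymbol{i}$ times the paper's $p$, where $q_0=a_1$. Left multiplication by a nonzero complex $z$ preserves solutions, since $\overline{zp}^{\boldsymbol{i}}=z\,\overline{p}^{\boldsymbol{i}}$ and $z\sqrt{s}\,z^{-1}=\sqrt{s}$.

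Your route is shorter, uses one formula for every case except $q=\sqrt{s}$, and shows that Lemma \ref{jintian} is really a corollary of Lemma \ref{yingwen}. The paper's route is self-contained. That matters, because Lemma \ref{yingwen} is justified there only by ``it is easy to verify'', and Proposition \ref{QISR} is stated without proof. So the direct expansion you call routine is what actually closes your argument, and it does go through:
\begin{itemize}
\item the $\boldsymbol{j}$-components cancel;
\item the real and $\boldsymbol{i}$-components use $q_0+(\sqrt{s}-q_0)=\sqrt{s}$;
\item the $\boldsymbol{k}$-component uses $q_2^2+q_3^2=(\sqrt{s}-q_0)(\sqrt{s}+q_0)$.
\end{itemize}
If you want the unit-modulus $p$ needed later in Theorem \ref{3.3}, rescaling by $1/|p|$ costs nothing, since positive real factors cancel in $pq(\overline{p}^{\boldsymbol{i}})^{-1}$.
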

\begin{proof}
By Lemma \ref{jin}, we may write
\[
q=a_1+\boldsymbol{j}a_2,
\qquad
a_1\in\mathbb{R},\quad a_2\in\mathbb{C}.
\]
Since
\[
\overline{q}^{\boldsymbol{i}}=a_1-\boldsymbol{j}a_2,
\]
we have
\[
q\overline{q}^{\boldsymbol{i}}
 =a_1^2+|a_2|^2=s.
\]
Put
\[
r=\sqrt{s}\geq 0.
\]
We shall construct a nonzero quaternion \(p\) such that
\[
pq=r\,\overline{p}^{\boldsymbol{i}}.
\]
This is equivalent to
\[
pq\bigl(\overline{p}^{\boldsymbol{i}}\bigr)^{-1}=r.
\]

First suppose that \(s=0\). Then
\[
a_1^2+|a_2|^2=0,
\]
so \(a_1=0\), \(a_2=0\), and hence \(q=0\). Taking \(p=1\), we obtain
\[
pq\bigl(\overline{p}^{\boldsymbol{i}}\bigr)^{-1}=0=\sqrt{s}.
\]

Now suppose that \(s>0\).

If \(a_2=0\), then \(q=a_1\in\mathbb{R}\) and
\[
a_1^2=s=r^2.
\]
Thus \(a_1=r\) or \(a_1=-r\).

If \(a_1=r\), take \(p=1\). Then
\[
pq\bigl(\overline{p}^{\boldsymbol{i}}\bigr)^{-1}
=q=r.
\]

If \(a_1=-r\), take \(p=\boldsymbol{j}\). Since
\[
\overline{p}^{\boldsymbol{i}}=-\boldsymbol{j},
\qquad
\bigl(\overline{p}^{\boldsymbol{i}}\bigr)^{-1}=\boldsymbol{j},
\]
we obtain
\[
pq\bigl(\overline{p}^{\boldsymbol{i}}\bigr)^{-1}
=\boldsymbol{j}(-r)\boldsymbol{j}=r.
\]

It remains to consider the case \(a_2\neq0\). Since
\[
r^2=a_1^2+|a_2|^2,
\]
we have \(r>|a_1|\), and in particular \(a_1-r\neq0\). Define
\[
p_2=1,
\qquad
p_1=\frac{a_2}{a_1-r},
\qquad
p=p_1+\boldsymbol{j}p_2=p_1+\boldsymbol{j}.
\]
Clearly \(p\neq0\). Using
\[
z\boldsymbol{j}=\boldsymbol{j}\overline z,
\qquad z\in\mathbb C,
\]
we calculate
\begin{align*}
pq
&=(p_1+\boldsymbol{j})(a_1+\boldsymbol{j}a_2)\\
&=a_1p_1-a_2
  +\boldsymbol{j}(\overline{p_1}a_2+a_1).
\end{align*}
For the complex component, we have
\begin{align*}
a_1p_1-a_2
&=\frac{a_1a_2}{a_1-r}-a_2\\
&=\frac{ra_2}{a_1-r}
 =rp_1.
\end{align*}
Since \(a_1-r\) is real, we have
\begin{align*}
\overline{p_1}a_2+a_1
&=\frac{|a_2|^2}{a_1-r}+a_1\\
&=\frac{|a_2|^2+a_1(a_1-r)}{a_1-r}\\
&=\frac{r^2-a_1r}{a_1-r}\\
&=-r.
\end{align*}
Consequently,
\[
pq=rp_1-\boldsymbol{j}r=r(p_1-\boldsymbol{j})=r\,\overline{p}^{\boldsymbol{i}}.
\]
Since \(p\neq0\), the quaternion \(\overline{p}^{\boldsymbol{i}}\) is invertible, and hence
\[
pq\bigl(\overline{p}^{\boldsymbol{i}}\bigr)^{-1}
=r=\sqrt{s}.
\]

Finally, \(p\) may be chosen to have unit modulus. Indeed, if \(p\) is
one of the nonzero quaternions constructed above and
\[
\widehat p=\frac{p}{|p|},
\]
then \(|\widehat p|=1\), and, since \(|p|\) is a positive real number,
\begin{align*}
\widehat p\,q
 \bigl(\overline{\widehat p}^{\boldsymbol{i}}\bigr)^{-1}
&=\frac{p}{|p|}\,q
  \left(\frac{\overline p^{\boldsymbol{i}}}{|p|}\right)^{-1}\\
&=pq\bigl(\overline p^{\boldsymbol{i}}\bigr)^{-1}\\
&=\sqrt{s}.
\end{align*}
This completes the proof.
\end{proof}

The following is our main result in this section.
\begin{theorem}\label{3.3}
Let $A$ be a  matrix in $M_n(\mathbb{H})$. Then the following two statements are equivalent.
\begin{enumerate}
\item [(a)] There exist a unitary matrix $U\in M_n(\mathbb{H})$ and an upper triangular matrix $\Delta\in M_n(\mathbb{H})$ such that 
$$
A=U\Delta U^{*\boldsymbol{i}},
$$
 where all the right eigenvalues of $\Delta\overline{\Delta}^{\boldsymbol{i}}$ are nonnegative real numbers.

\item [(b)] All the right eigenvalues of $A\overline{A}^{\boldsymbol{i}}$ are nonnegative real numbers.
\end{enumerate}
\end{theorem}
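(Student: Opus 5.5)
The proof rests on one identity. If $A=U\Delta U^{*\boldsymbol{i}}$ with $U$ unitary, then by Lemma \ref{haoduo} we have $U^{*\boldsymbol{i}}=(\overline{U}^{\boldsymbol{i}})^{-1}$ and $\overline{U^{*\boldsymbol{i}}}^{\boldsymbol{i}}=U^{*}$. Hence
$$A\overline{A}^{\boldsymbol{i}}=U\Delta(\overline{U}^{\boldsymbol{i}})^{-1}\overline{U}^{\boldsymbol{i}}\,\overline{\Delta}^{\boldsymbol{i}}U^{*}=U(\Delta\overline{\Delta}^{\boldsymbol{i}})U^{*}.$$
So $A\overline{A}^{\boldsymbol{i}}$ is unitarily similar to $\Delta\overline{\Delta}^{\boldsymbol{i}}$. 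Right eigenvalues are similarity invariants, so the two matrices have the same right eigenvalues. This gives (a)$\Rightarrow$(b) at once. The same identity will also drive an induction for (b)$\Rightarrow$(a).

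For (b)$\Rightarrow$(a) I will argue by induction on $n$, copying the complex proof of Theorem \ref{wuwu}. The first step is to produce a unit vector $\boldsymbol{x}$ and a real $\sigma\ge 0$ with $A\overline{\boldsymbol{x}}^{\boldsymbol{i}}=\boldsymbol{x}\sigma$. This vector plays the role of the first column of $U$.

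\begin{enumerate}
\item Take a nonnegative real right eigenvalue $\mu$ of $B:=A\overline{A}^{\boldsymbol{i}}$, with eigenvector $\boldsymbol{y}$, so that $B\boldsymbol{y}=\boldsymbol{y}\mu$. Since $\mu$ is real, the right eigenspace $W=\{\boldsymbol{y}:B\boldsymbol{y}=\boldsymbol{y}\mu\}$ is a right $\mathbb{H}$-subspace.
\item Consider the map $\phi(\boldsymbol{y})=A\overline{\boldsymbol{y}}^{\boldsymbol{i}}$. It satisfies $B\phi(\boldsymbol{y})=A\overline{\overline{A}^{\boldsymbol{i}}\overline{\boldsymbol{y}}^{\boldsymbol{i}}}^{\,\boldsymbol{i}}\cdot$ (regrouped) $=A\overline{B\boldsymbol{y}}^{\boldsymbol{i}}=\phi(\boldsymbol{y})\mu$, using Lemma \ref{haoduo}(4),(6) and the reality of $\mu$. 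So $\phi$ maps $W$ into itself. Moreover $\phi$ is additive and $\phi(\boldsymbol{y}c)=\phi(\boldsymbol{y})\overline{c}^{\boldsymbol{i}}$, and $\phi^2=B=\mu$ on $W$.
\item If $\mu>0$, choose $\boldsymbol{y}\in W$ with $\boldsymbol{v}=\boldsymbol{y}+\phi(\boldsymbol{y})/\sqrt{\mu}\neq 0$. One of $\boldsymbol{y}$ or $\boldsymbol{y}\boldsymbol{j}$ works, since $\phi(\boldsymbol{y}\boldsymbol{j})=-\phi(\boldsymbol{y})\boldsymbol{j}$. Then $\phi(\boldsymbol{v})=\boldsymbol{v}\sqrt{\mu}$, and normalizing by a positive real gives $\boldsymbol{x}$ with $\sigma=\sqrt{\mu}$.
\item If $\mu=0$: either some $\boldsymbol{y}$ has $\phi(\boldsymbol{y})=0$, and we take $\sigma=0$; or $\phi(\boldsymbol{y})\ne 0$ for some $\boldsymbol{y}$, in which case $\boldsymbol{v}=\phi(\boldsymbol{y})$ satisfies $\phi(\boldsymbol{v})=\phi^2(\boldsymbol{y})=0$, and again $\sigma=0$.
\end{enumerate}

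As an alternative to steps 3 and 4, one can get an $\boldsymbol{i}$-eigenvector $\boldsymbol{x}$ with a quaternion $\boldsymbol{i}$-eigenvalue $q$ satisfying $q\overline{q}^{\boldsymbol{i}}=\mu$. Lemma \ref{jin} and Lemma \ref{jintian} then give a unit $p$ with $pq(\overline{p}^{\boldsymbol{i}})^{-1}=\sqrt{\mu}$, and replacing $\boldsymbol{x}$ by $\boldsymbol{x}p^{-1}$ makes the eigenvalue real.

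Next extend $\boldsymbol{x}$ to a unitary $V=[\boldsymbol{x},\ldots]$ using Theorem \ref{6.11}. Then $V^{*}A\overline{V}^{\boldsymbol{i}}$ has first column $V^{*}\boldsymbol{x}\sigma=\sigma e_1$, so it has the block form
$$V^{*}A\overline{V}^{\boldsymbol{i}}=\begin{pmatrix}\sigma & *\\ 0 & A_2\end{pmatrix}.$$
By the identity above, $V^{*}A\overline{A}^{\boldsymbol{i}}V$ equals
$$\begin{pmatrix}\sigma^2 & *\\ 0 & A_2\overline{A_2}^{\boldsymbol{i}}\end{pmatrix}.$$
Hence the right eigenvalues of $A_2\overline{A_2}^{\boldsymbol{i}}$ are among those of $A\overline{A}^{\boldsymbol{i}}$, up to similarity, by Theorem \ref{6.10} and Theorem \ref{RQS}. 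Some care is needed here: passing through complex representations, the block-triangular form shows that the characteristic polynomial of $A\overline{A}^{\boldsymbol{i}}$ factors through the diagonal blocks. So the standard (complex) eigenvalues of $A_2\overline{A_2}^{\boldsymbol{i}}$ are among the nonnegative reals, and every quaternion right eigenvalue, being similar to a standard one, is then a nonnegative real.

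Applying the induction hypothesis to $A_2$ gives a unitary $U_2$ with $A_2=U_2\Delta_2U_2^{*\boldsymbol{i}}$. Put $U=V(1\oplus U_2)$. Then $A=U\Delta U^{*\boldsymbol{i}}$ with $\Delta$ upper triangular and nonnegative diagonal. Moreover $\Delta\overline{\Delta}^{\boldsymbol{i}}$ is unitarily similar to $A\overline{A}^{\boldsymbol{i}}$, so its right eigenvalues are nonnegative reals, which is (a).

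I expect the main obstacle to be the first step: producing an $\boldsymbol{i}$-eigenvector with a \emph{real} $\boldsymbol{i}$-eigenvalue from the hypothesis on $A\overline{A}^{\boldsymbol{i}}$. The semilinearity bookkeeping for $\phi$ in the quaternionic setting, in particular verifying $B\phi=\phi\mu$ and handling the case $\boldsymbol{v}=0$, is where I would be most careful.
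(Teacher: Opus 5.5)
Your proof is correct and follows essentially the paper's route. You extract a unit $\boldsymbol{x}$ with $A\overline{\boldsymbol{x}}^{\boldsymbol{i}}=\boldsymbol{x}\sigma$, $\sigma\ge 0$, from a nonnegative right eigenvalue of $A\overline{A}^{\boldsymbol{i}}$, deflate with a unitary $V$ via $A\overline{A}^{\boldsymbol{i}}=U\Delta\overline{\Delta}^{\boldsymbol{i}}U^{*}$, and induct. Your $\boldsymbol{j}$-trick replaces the paper's split into a dependent case (settled by Lemma \ref{jintian}) and an independent case, and your complex-representation argument fills in the block-eigenvalue step that the paper only calls easily verified.

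One small slip: the intermediate expression in your step 2 is garbled (as written it equals $A^{2}\boldsymbol{y}$). The correct chain is $B\phi(\boldsymbol{y})=A\overline{A}^{\boldsymbol{i}}A\overline{\boldsymbol{y}}^{\boldsymbol{i}}=A\overline{B}^{\boldsymbol{i}}\,\overline{\boldsymbol{y}}^{\boldsymbol{i}}=A\overline{B\boldsymbol{y}}^{\boldsymbol{i}}$.
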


    \begin{proof}
Suppose first that $(a)$ holds. If there exists a unitary matrix $U\in M_n(\mathbb{H})$ and a desired upper triangular matrix $\Delta\in M_n(\mathbb{H})$ such
    that $A=U\Delta U^{*\boldsymbol{i}}$, then 
    $$
    A\overline{A}^{\boldsymbol{i}}=U\Delta U^{*\boldsymbol{i}}\overline{U}^{\boldsymbol{i}}\overline{\Delta}^{\boldsymbol{i}}U^*=U\Delta\overline{\Delta}^{\boldsymbol{i}}U^*.
    $$ 
    We can see that $A\overline{A}^{\boldsymbol{i}}$
    is unitarily equivalent to $\Delta\overline{\Delta}^{\boldsymbol{i}}$ and hence the right eigenvalues of $A\overline{A}^{\boldsymbol{i}}$ are the same as the right eigenvalues of the upper triangular matrix $\Delta\overline{\Delta}^{\boldsymbol{i}}$. Since the right eigenvalues of an upper triangular matrix are exactly its main diagonal entries by Theorem \ref{6.10}, we draw the conclusion that all the right eigenvalues of $A\overline{A}^{\boldsymbol{i}}$ are nonnegative real numbers.

    Conversely, we assume that $(b)$ holds. Assume that $A\overline{A}^{\boldsymbol{i}}$ has only nonnegative right eigenvalues, that is $A\overline{A}^{\boldsymbol{i}}\boldsymbol{x}=\boldsymbol{x}\lambda$ with $\lambda\ge0$ and
    $\boldsymbol{x}\ne\boldsymbol{0}$. There are two cases.

    (1) $A\overline{\boldsymbol{x}}^{\boldsymbol{i}}$ and $\boldsymbol{x}$ are right linearly dependent,

    (2) $A\overline{\boldsymbol{x}}^{\boldsymbol{i}}$ and $\boldsymbol{x}$ are right linearly independent.

    In case (1), there is some $\mu\in\mathbb{H}$ such that $A\overline{\boldsymbol{x}}^{\boldsymbol{i}}=\boldsymbol{x}\mu$. Then
    $$A\overline{A}^{\boldsymbol{i}}\boldsymbol{x}=A\overline{\boldsymbol{x}}^{\boldsymbol{i}}\overline{\mu}^{\boldsymbol{i}}
    =\boldsymbol{x}\mu \overline{ \mu }^{\boldsymbol{i}} =\boldsymbol{x}\lambda,$$
    hence $\mu \overline{ \mu }^{\boldsymbol{i}} =\lambda$. In particular, if $\lambda=0$, then $\mu=0$. One simply takes $\boldsymbol{v}=\boldsymbol{x}$ and $\alpha=\mu=0$. 

    In case (2), the vector $\boldsymbol{y}=A\overline{\boldsymbol{x}}^{\boldsymbol{i}}+\boldsymbol{x}\mu\ne \boldsymbol{0}$ for all $\mu\in\mathbb{H}$ and we can choose a nonnegative real number $\mu$
    such that $\lambda=\mu^2$. Then
    $$
     A\overline{\boldsymbol{{y}}}^{\boldsymbol{i}}=A(\overline{A}^{\boldsymbol{i}}\boldsymbol{x}+\overline{\boldsymbol{x}}^{\boldsymbol{i}}\overline{\mu}^{\boldsymbol{i}})
    =\boldsymbol{x}\lambda+A\overline{\boldsymbol{x}}^{\boldsymbol{i}}\overline{\mu}^{\boldsymbol{i}} \\
    =\boldsymbol{x}\mu\overline{\mu}^{\boldsymbol{i}}+A\overline{\boldsymbol{x}}^{\boldsymbol{i}}\overline{\mu}^{\boldsymbol{i}}=(\boldsymbol{x}\mu
    +A\overline{\boldsymbol{x}}^{\boldsymbol{i}})\overline{\mu}^{\boldsymbol{i}}=\boldsymbol{y}\overline{\mu}^{\boldsymbol{i}}.
    $$

    In either case (1) or (2), we have shown there exist some nonzero vector $\boldsymbol{v}\in\mathbb{H}^n$ and a quaternion $\alpha\in\mathbb{H}$ with
    $\alpha \overline{ \alpha }^{\boldsymbol{i}} =\lambda$ such that $A \overline{ \boldsymbol{v} }^{\boldsymbol{i}} =\boldsymbol{v}\alpha$. We may assume that $\boldsymbol{v}$ is a unit vector without
    loss of generality. Also, we can choose a unit modulus quaternion p such that
    $p\alpha\ (\overline{ p }^{\boldsymbol{i}})^{-1}=\sqrt{\lambda}$ by Lemma \ref{jintian}, then
    $$
    A(\overline{ \boldsymbol{v} }^{\boldsymbol{i}}(\overline{ p }^{\boldsymbol{i}})^{-1})=(\boldsymbol{v}p^{-1})\sqrt{\lambda},
    $$
it is obvious that the vector $\boldsymbol{v}p^{-1}$ is still a unit vector.

    Therefore, if $\lambda$ is a nonnegative right eigenvalue of $A \overline{ A }^{\boldsymbol{i}}$, then we can always find a unit vector $\boldsymbol{v}$ and a
    nonnegative number $\sigma=\sqrt{\lambda}$ such that 
    $$
    A \overline{ \boldsymbol{v} }^{\boldsymbol{i}} =\boldsymbol{v}\sigma=\sigma\boldsymbol{v}.
    $$
     Now we extend
    this vector $\boldsymbol{v}$ to an orthonormal basis $\{\boldsymbol{v}, \boldsymbol{v}_2, \cdots, \boldsymbol{v}_n\}$ of $\mathbb{H}^n$ by Theorem \ref{6.11}
     and let $V_{1}$ be the unitary matrix that has these vectors as columns. The first column of the matrix
     $(\overline{V_1})^TA \overline{ V_{1} }^{\boldsymbol{i}}$ has entries
     $$
     \boldsymbol{v}_l^*A \overline{ \boldsymbol{v} }^{\boldsymbol{i}} =\sigma\boldsymbol{v}_l^*\boldsymbol{v}=\sigma\delta_{l1}$$
      because of orthonormality and the
     relation $A \overline{ \boldsymbol{v} }^{\boldsymbol{i}} =\sigma\boldsymbol{v}$.

     Thus, all but the first entries in the first column of
     $(\overline{V_1})^TA {\overline{ V_1 }^{\boldsymbol{i}}}$ must be zero (the first entry might also be zero). If we write this matrix in
     partitioned form as
     $$(\overline{V_1})^TA{\overline{ V_1 }^{\boldsymbol{i}}}=
     \begin{pmatrix}
     \sigma & W^T  \\
     0 & A_2
     \end{pmatrix},$$
     where $\sigma\ge0,\ W\in\mathbb{H}^{(n-1)},\ A_2\in M_{n-1}{(\mathbb{H})}$. We can see that

     $$$$
     $$((\overline{V_1})^TA {\overline{ V_1 }^{\boldsymbol{i}}})\times {\overline{ (\overline{{V_1}})^{T}A{\overline{ V_1 }^{\boldsymbol{i}}} }^{\boldsymbol{i}}}
     =V_1^*A \overline{ A }^{\boldsymbol{i}} V_1=
     \begin{pmatrix}
     \sigma^2 & \sigma {(\overline{ W }^{\boldsymbol{i}}})^T+W^T \overline{ A_2 }^{\boldsymbol{i}} \\
     0 & A_2 \overline{ A_2 }^{\boldsymbol{i}}
     \end{pmatrix}.$$
     It can be easily verified that the right eigenvalues of $A{\overline{A}^{\boldsymbol{i}}}$ (all nonnegative by assumptions) are therefore $\sigma^2$ together with the right eigenvalues of
     $A_2\overline{ A_2 }^{\boldsymbol{i}}$.

     We conclude that $A_2\in M_{n-1}(\mathbb{H})$ obtained by this process of reduction also has the property that all the right eigenvalues of
     $A_2 \overline{ A_2 }^{\boldsymbol{i}}$ are nonnegative. The process of reduction can be repeated with $A_2$ and its successors at most $n-1$ times to obtain
     $$ (\overline{V_{n-1}})^T\cdots(\overline{V_{2}})^T(\overline{V_{1}})^TA
     {\overline{ V_1 }^{\boldsymbol{i}}}\cdots {\overline{ V_2 }^{\boldsymbol{i}}} {\overline{ V_{n-1} }^{\boldsymbol{i}}}
     =\begin{pmatrix}
     \sigma_1  & * & \cdots & * \\
 0 & \sigma_2 & \cdots & * \\
 \vdots & \vdots & \ddots & \vdots \\
 0 & 0 & \cdots & \sigma_n
     \end{pmatrix}
     =\Delta,$$
     where $\Delta$ is upper triangular with nonnegative main diagonal entries $\sigma_i,\ i=1,\cdots, n$. If we set
     $U=V_1V_2\cdots V_{n-1}$, then 
     $$
     A=U\Delta U^{*\boldsymbol{i}}.
     $$
Finally, $\Delta\overline{\Delta}^{\boldsymbol{i}}$ is upper triangular with diagonal entries $\sigma_{1}^{2},\cdots,\sigma_{n}^{2}$. This implies all of its right eigenvalues are nonnegative real numbers. Thus $(a)$ holds.
   \end{proof}

\begin{example}
Let $A=
\begin{pmatrix}
1 & 0 \\
\boldsymbol{i} & 1 \\
\end{pmatrix}$, the right eigenvalues of $A\overline{A}^{\boldsymbol{i}}$ are $\lambda_{1}=1$ and $\lambda_{2}=1$.
 Then there exists a unitary matrix
$U=
\begin{pmatrix}
0 & 1 \\
1 & 0
\end{pmatrix}$ and an upper triangular matrix $\Delta=
\begin{pmatrix}
1 & \boldsymbol{i} \\
0 & 1
\end{pmatrix}$ such that $A=U\Delta U^{*\boldsymbol{i}}$.
\end{example}

Based on Theorem \ref{3.3}, we obtain a factorization analogous to the Takagi's factorization under unitarily $\boldsymbol{i}$-equivalence for quaternion matrix. In fact, the result can be inferred from Corollary \ref{111}.

\begin{corollary}\label{UIF}(Quaternion Takagi's factorization)
Let $A\in M_n(\mathbb{H})$ be an $\boldsymbol{i}$-Hermitian matrix, then there exists a unitary matrix $U\in M_n(\mathbb{H})$ and a nonnegative diagonal matrix
$\Sigma$  such that $A=U\Sigma U^{*\boldsymbol{i}}$.
\end{corollary}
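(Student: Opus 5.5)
The most direct route goes through Theorem \ref{RUISIN}, with Theorem \ref{3.3} as a cross-check on the diagonal entries. The $\boldsymbol{i}$-Hermitian hypothesis $A=A^{*\boldsymbol{i}}$ means $\overline{A}^{\boldsymbol{i}}=A^{*}$. From this, $\overline{A^*A}^{\boldsymbol{i}}=\overline{A^*}^{\boldsymbol{i}}\overline{A}^{\boldsymbol{i}}=AA^*$, so $A$ is $\boldsymbol{i}$-conjugate normal. By Theorem \ref{RUISIN} there is a unitary $V$ with
\[
A=VDV^{*\boldsymbol{i}},\qquad D=\mathrm{diag}(d_1,\dots,d_n).
\]
Since $A=A^{*\boldsymbol{i}}$ and $(VDV^{*\boldsymbol{i}})^{*\boldsymbol{i}}=VD^{*\boldsymbol{i}}V^{*\boldsymbol{i}}$ by Lemma \ref{haoduo}(1),(6), we get $D=D^{*\boldsymbol{i}}$. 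Entrywise this says each $d_s$ is a $1\times 1$ $\boldsymbol{i}$-Hermitian quaternion, that is, $-\boldsymbol{i}\,\overline{d_s}\,\boldsymbol{i}=d_s$, equivalently $\overline{d_s}^{\boldsymbol{i}}=\overline{d_s}$.

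Next I identify which quaternions these are. Writing $d=d_0+d_1\boldsymbol{i}+d_2\boldsymbol{j}+d_3\boldsymbol{k}$, we have $\overline{d}^{\boldsymbol{i}}=d_0+d_1\boldsymbol{i}-d_2\boldsymbol{j}-d_3\boldsymbol{k}$ and $\overline d=d_0-d_1\boldsymbol{i}-d_2\boldsymbol{j}-d_3\boldsymbol{k}$. The condition $\overline{d}^{\boldsymbol{i}}=\overline{d}$ therefore forces $d_1=0$, so $d\in\mathbb{R}\oplus\mathbb{R}\boldsymbol{j}\oplus\mathbb{R}\boldsymbol{k}$. By Lemma \ref{jin}, this gives $d\,\overline{d}^{\boldsymbol{i}}=|d|^2=:s\ge 0$.

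Now I normalize each diagonal entry. Lemma \ref{jintian} supplies a unit quaternion $p_s$ with
\[
p_s d_s(\overline{p_s}^{\boldsymbol{i}})^{-1}=|d_s|.
\]
Set $P=\mathrm{diag}(p_1,\dots,p_n)$, which is unitary. Because $\overline{p_s}^{\boldsymbol{i}}$ is also a unit quaternion, $(\overline{P}^{\boldsymbol{i}})^{-1}=P^{*\boldsymbol{i}}$, and hence
\[
PDP^{*\boldsymbol{i}}=\Sigma:=\mathrm{diag}(|d_1|,\dots,|d_n|).
\]
Then $D=P^*\Sigma\,\overline{P}^{\boldsymbol{i}}=P^*\Sigma(P^*)^{*\boldsymbol{i}}$. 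Taking $U=VP^*$, which is unitary, and using Lemma \ref{haoduo}(6) for $(VP^*)^{*\boldsymbol{i}}=(P^*)^{*\boldsymbol{i}}V^{*\boldsymbol{i}}$, we obtain
\[
A=U\Sigma U^{*\boldsymbol{i}},
\]
which is the desired factorization.

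The main point needing care is the bookkeeping of $^{*\boldsymbol{i}}$ under products and inverses: checking that $(\overline{P}^{\boldsymbol{i}})^{-1}=P^{*\boldsymbol{i}}$ for unitary $P$, and that the congruence composes in the right order. Both follow from Lemma \ref{haoduo}(1),(2),(6). The case $d_s=0$ is trivial, since we take $p_s=1$. Alternatively, one could invoke Theorem \ref{3.3}: $A\overline{A}^{\boldsymbol{i}}=AA^*$ is positive semidefinite, so it has nonnegative right eigenvalues, giving $A=U\Delta U^{*\boldsymbol{i}}$ with $\Delta$ triangular with nonnegative diagonal. Then $\Delta=\Delta^{*\boldsymbol{i}}$ forces $\Delta$ to be both upper and lower triangular, hence diagonal. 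This gives the same conclusion with less computation.
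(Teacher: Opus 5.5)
Your main argument is correct, but it takes a different route from the paper. Your closing ``alternative'' is exactly the paper's proof. The paper checks that $A\overline{A}^{\boldsymbol{i}}=AA^*$ has nonnegative right eigenvalues and invokes Theorem~\ref{3.3} to get $A=U\Sigma U^{*\boldsymbol{i}}$ with $\Sigma$ upper triangular with nonnegative diagonal. It then uses $\overline{\Sigma}^{\boldsymbol{i}}=\Sigma^*$, with one side upper and the other lower triangular, to force $\Sigma$ to be diagonal.

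Your primary route diagonalizes first. You use Theorem~\ref{RUISIN}, which amounts to the spectral theorem for the normal matrix $\boldsymbol{i}A$. You then reduce the $\boldsymbol{i}$-Hermitian condition to the diagonal entries, showing each $d_s$ has zero $\boldsymbol{i}$-component. Finally you normalize entrywise with Lemma~\ref{jintian}. The bookkeeping $(\overline{P}^{\boldsymbol{i}})^{-1}=P^{*\boldsymbol{i}}$, $(P^*)^{*\boldsymbol{i}}=\overline{P}^{\boldsymbol{i}}$, and $U=VP^*$ all checks out.

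What your route buys is independence from the inductive construction in Theorem~\ref{3.3}. The statement of Theorem~\ref{3.3}(a) only controls the right eigenvalues of $\Delta\overline{\Delta}^{\boldsymbol{i}}$. The nonnegativity of the diagonal of $\Delta$, which the paper needs here (and which your alternative also assumes), is visible only inside that theorem's proof. You instead produce it explicitly as $\Sigma=\mathrm{diag}(|d_1|,\dots,|d_n|)$.

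The paper's route is shorter once Theorem~\ref{3.3} is available. It also avoids the separate entrywise normalization step.
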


    \begin{proof}
    If $\overline{ A }^{\boldsymbol{i}} =A^{\ast}$, then $A \overline{ A }^{\boldsymbol{i}}=AA^*$. If $\boldsymbol{x}\ne\boldsymbol{0}$ is any right eigenvector of $AA^*$, that is, $AA^*\boldsymbol{x}=\boldsymbol{x}\lambda$, then
    $$\boldsymbol{x}^*(\boldsymbol{x}\lambda)=\boldsymbol{x}^*AA^*\boldsymbol{x}=({A^*\boldsymbol{x}})^*(A^*\boldsymbol{x}).$$
    Since $\boldsymbol{y}^*\boldsymbol{y}\ge0$ for $\boldsymbol{y}\in\mathbb{H}^n$ with $\boldsymbol{y}^*\boldsymbol{y}=0$ if and only if $\boldsymbol{y}=\boldsymbol{0}$, we see that
    $$\lambda=\frac{(A^*\boldsymbol{x})^*(A^*\boldsymbol{x})}{\boldsymbol{x}^*\boldsymbol{x}}\ge 0.$$
    Thus, all the right eigenvalues of $A \overline{ A }^{\boldsymbol{i}}$ are nonnegative whenever $\overline{ A }^{\boldsymbol{i}}=A^{\ast}$.
Theorem \ref{3.3} guarantees there is a unitary matrix $U\in M_n(\mathbb{H})$  and an upper triangular $\Sigma\in M_n(\mathbb{H})$ with
$$\Sigma=\begin{pmatrix}
\sigma_1  & * & \cdots & * \\
 0 & \sigma_2 & \cdots & * \\
 \vdots & \vdots & \ddots & \vdots \\
 0 & 0 & \cdots & \sigma_n
\end{pmatrix},$$
$\sigma_i\ge0,\ i=1, \cdots, n$ such that $A=U\Sigma U^{*\boldsymbol{i}}$, this is equivalent to $\overline{ A }^{\boldsymbol{i}}=\overline{ U }^{\boldsymbol{i}} \overline{ \Sigma }^{\boldsymbol{i}}U^{\ast},~~A^{\ast}= \overline{ U }^{\boldsymbol{i}} \Sigma^{\ast}U^{\ast}$, therefore, $ \overline{ \Sigma }^{\boldsymbol{i}} =\Sigma^{\ast}$, but $\Sigma$ is upper triangular, thus $\Sigma$ is diagonal. Since
$$
A^{*}A=\overline{ U }^{\boldsymbol{i}}\Sigma^{2}U^{*\boldsymbol{i}},
$$
and $\overline{ U }^{\boldsymbol{i}}$ is unitary, it follows that the diagonal entries of $\Sigma$ are the singular values
of $A$.
 \end{proof}
Note that if the matrix $A$ in Corollary \ref{UIF} is complex, then $A$ is Hermitian actually, thus by Corollary \ref{UIF} we find every complex Hermitian matrix can be unitarily $\boldsymbol{i}$-congruent to a real nonnegative diagonal matrix $\Lambda_1$ in $M_{n}(\mathbb{H})$, this is different from the truth that $A$ can be unitarily equivalent to a real diagonal matrix $\Lambda_2$ because the elements of $\Lambda_2$ may not be nonnegative.
\begin{example}
Let $A=
\begin{pmatrix}
1 & \boldsymbol{i} \\
-\boldsymbol{i} & 2 \\
\end{pmatrix}$, then $A$ is $\boldsymbol{i}$-Hermitian.
 Then there exists a unitary matrix
$U=\frac{1}{\sqrt{10+2\sqrt5}}
\begin{pmatrix}
2 & \sqrt5+1 \\
(-1-\sqrt5)\boldsymbol{i} & 2\boldsymbol{i}
\end{pmatrix}$ and a nonnegative diagonal matrix $\Delta=
\begin{pmatrix}
\frac{3+\sqrt5}{2} & 0 \\
0 & \frac{3-\sqrt5}{2}
\end{pmatrix}$ such that $A=U\Delta U^{*\boldsymbol{i}}$.
\end{example}
\begin{remark}
Our result in Corollary \ref{UIF} is actually the case in Theorem 3 of F. Zhang's paper \cite{HZ2012} when $\eta$ is equal to $\boldsymbol{i}$.

\end{remark}

\subsection{i-polar decomposition}
Inspired by the classical polar decomposition of quaternion matrices, we aim to propose a similar decomposition of quaternion matrices  through the $\boldsymbol{i}$-Hermitian matrices and unitary matrices. Hence, in this section, we show the factorization of a square quaternion matrix $A$ of the form $A=SU$, where $S\in M_{n}(\mathbb{H})$ is an $\boldsymbol{i}$-Hermitian matrix and $U\in M_{n}(\mathbb{H})$ is unitary. This factorization is named an $\boldsymbol{i}$-polar decomposition. We prove that every quaternion matrix $A$ has an $\boldsymbol{i}$-polar decomposition.

\begin{proposition}[\cite{Z}]
Let $A\in M_{n}(\mathbb{H})$, then there exists a Hermitian matrix $P$ and a unitary matrix $U$ such that $A=PU$.
\end{proposition}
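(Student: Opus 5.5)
The plan is to prove this quaternionic polar decomposition by passing through the singular value decomposition. Concretely, I would obtain a factorization $A=V\Sigma W^{*}$ with $V,W\in M_n(\mathbb{H})$ unitary and $\Sigma=\mathrm{diag}(\sigma_1,\dots,\sigma_n)$ real nonnegative. Then I would set $P=V\Sigma V^{*}$ and $U=VW^{*}$.

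\emph{Obtaining the SVD.} Since $A^{*}A$ is Hermitian, it is normal. By the corollary of \cite{Z} on normal matrices, there is a unitary $W$ with $W^{*}A^{*}AW=\mathrm{diag}(\lambda_1,\dots,\lambda_n)$, where the $\lambda_s$ are real. They are nonnegative by the same computation used in the proof of Corollary \ref{UIF}: for a right eigenvector $\boldsymbol{x}$, one has $\boldsymbol{x}^{*}\boldsymbol{x}\lambda=(A\boldsymbol{x})^{*}(A\boldsymbol{x})\ge 0$.

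Put $\sigma_s=\sqrt{\lambda_s}$. Let $\boldsymbol{w}_s$ denote the columns of $W$. For each $s$ with $\sigma_s>0$, define $\boldsymbol{v}_s=A\boldsymbol{w}_s\sigma_s^{-1}$. These vectors are orthonormal, because $\boldsymbol{v}_s^{*}\boldsymbol{v}_t=\sigma_s^{-1}\boldsymbol{w}_s^{*}A^{*}A\boldsymbol{w}_t\sigma_t^{-1}=\delta_{st}$; here the real scalars $\sigma_t$ commute with everything.

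Complete them to an orthonormal basis of $\mathbb{H}^n$ by Theorem \ref{6.11}, applied repeatedly; equivalently, use Gram--Schmidt over $\mathbb{H}$ with right scalar multiplication. Let $V$ be the unitary matrix with these columns. For indices with $\sigma_s=0$, we have $\|A\boldsymbol{w}_s\|^2=\lambda_s=0$, so $A\boldsymbol{w}_s=0=\boldsymbol{v}_s\sigma_s$. Hence $AW=V\Sigma$, that is, $A=V\Sigma W^{*}$.

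\emph{Concluding.} With this factorization, $A=(V\Sigma V^{*})(VW^{*})=PU$. The factor $P$ is Hermitian because $\Sigma$ is real diagonal, and $U$ is unitary as a product of unitary matrices. Moreover $P$ is positive semidefinite, and $P=(AA^{*})^{1/2}$.

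The main obstacle is the orthonormal completion step. Theorem \ref{6.11} as stated extends only a single unit vector, while we need to extend an orthonormal set of several vectors. I would handle this either by induction, working in the orthogonal complement, which is again a right $\mathbb{H}$-space of lower dimension, or by invoking the Gram--Schmidt process, which works verbatim over the quaternions provided scalars are placed on the right. All remaining steps are routine, because real scalars are central in $\mathbb{H}$.
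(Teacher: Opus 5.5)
Your proposal is correct and is essentially the paper's argument: the paper also starts from a singular value decomposition $A=X\Sigma Y^{*}$ and regroups it as $A=(X\Sigma X^{*})(XY^{*})$, which is exactly your choice $P=V\Sigma V^{*}$, $U=VW^{*}$. The only difference is that you also derive the SVD itself, from the spectral theorem for the Hermitian matrix $A^{*}A$ together with a quaternionic Gram--Schmidt completion using right scalars, and you handle that step correctly; the paper simply states the SVD without proof.
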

The decomposition $A=PU$ is called the polar decomposition of a quaternion matrix $A$.
Let $A\in M_{n}(\mathbb{H})$, then there exist unitary matrices $X, Y$ and a diagonal matrix $\Sigma$ such that
\begin{equation}\label{tt}
A=X\Sigma Y^{*},
 \end{equation}
 the columns of $X$ are right eigenvectors of $AA^{*}$, the diagonal entries of $\Sigma$ are the square roots of the corresponding right eigenvalues and the columns of $Y$ are right eigenvectors of $A^{*}A$. Equation (\ref{tt}) is called the singular value decomposition of $A$.

The polar decomposition of $A$ can easily be obtained from its singular value decomposition. Specifically, rewrite (\ref{tt}) as
\begin{equation}\label{zaoa}
A=(X\Sigma X^{*})(XY^{*}),
\end{equation}
observe that $U=XY^{*}$ is a unitary matrix and $P=X\Sigma X^{*}$ is Hermitian.

By Corollary \ref{UIF} we know that for every $\boldsymbol{i}$-Hermitian quaternion matrix $A$, there exists a unitary matrix $U$ and a real diagonal matrix $\Sigma$ such that

\begin{equation}\label{zagai}
 \Sigma=  U^* A \overline{ U }^{\boldsymbol{i}} = \mathrm{diag} (\lambda_1, \lambda_2, \cdots, \lambda_n).
 \end{equation}
The decomposition (\ref{zagai}) becomes a singular value decomposition of $A$ when we choose $\Sigma$ nonnegative.

\begin{proposition}\label{12294}
Let $A\in M_{n}(\mathbb{H})$, then there exists an $\boldsymbol{i}$-Hermitian matrix $S$ and a unitary matrix $U$ such that $A=SU$.
\end{proposition}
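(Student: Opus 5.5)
Starting from the singular value decomposition (\ref{tt}), I would build the decomposition from the ordinary polar decomposition, inserting an $\boldsymbol{i}$-conjugation so that the Hermitian factor becomes $\boldsymbol{i}$-Hermitian. Write $A=X\Sigma Y^{*}$ with $X,Y$ unitary and $\Sigma$ real nonnegative diagonal. Mirroring (\ref{zaoa}), I would put
$$
A=\bigl(X\Sigma X^{*\boldsymbol{i}}\bigr)\bigl((X^{*\boldsymbol{i}})^{-1}Y^{*}\bigr),
\qquad
S=X\Sigma X^{*\boldsymbol{i}},\quad U=(X^{*\boldsymbol{i}})^{-1}Y^{*}=\overline{X}^{\boldsymbol{i}}Y^{*}.
$$
The product telescopes to $X\Sigma Y^{*}=A$, so the decomposition itself needs no further work; what remains is to check the properties of the two factors.

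First, $U$ is unitary. By Lemma \ref{haoduo}(1),(2), $(X^{*\boldsymbol{i}})^{-1}=(\overline{X}^{\boldsymbol{i}})^{*\,-1}$. Moreover, $\overline{X}^{\boldsymbol{i}}=(-\boldsymbol{i}I_n)X(-\boldsymbol{i}I_n)^{*}$ is a product of unitary matrices, hence unitary, so $(X^{*\boldsymbol{i}})^{-1}=\overline{X}^{\boldsymbol{i}}$. A product of unitaries is unitary, so $U$ is unitary.

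Second, $S$ is $\boldsymbol{i}$-Hermitian, i.e. $S^{*\boldsymbol{i}}=S$. By Lemma \ref{haoduo}(6),(1), $S^{*\boldsymbol{i}}=(X^{*\boldsymbol{i}})^{*\boldsymbol{i}}\Sigma^{*\boldsymbol{i}}X^{*\boldsymbol{i}}=X\Sigma^{*\boldsymbol{i}}X^{*\boldsymbol{i}}$. For the middle factor, $\Sigma$ is real, so $\Sigma^{*\boldsymbol{i}}=-\boldsymbol{i}\Sigma\boldsymbol{i}=\Sigma$. Hence $S^{*\boldsymbol{i}}=S$. Alternatively, one can write $S=\overline{P}^{\boldsymbol{i}}P^{*}$ with $P=\overline{X}^{\boldsymbol{i}}\Sigma^{1/2}$ and apply Theorem \ref{11251}.

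I do not expect a real obstacle. The point that needs the most care is the bookkeeping in Lemma \ref{haoduo}: keeping straight the order reversal in $(AB)^{*\boldsymbol{i}}$, and using that the $\boldsymbol{i}$-conjugate of a unitary matrix is unitary. As a cross-check, I would also note a second route. By Theorem \ref{RUISIN}'s trick, decompose $\boldsymbol{i}A$ or $A\boldsymbol{i}$ by the ordinary polar decomposition and translate using Lemma \ref{RIST}(3). I would keep the SVD route as the primary argument since it is fully explicit.
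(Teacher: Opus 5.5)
Your argument is correct and is essentially the paper's proof. The paper writes $A=(X\Sigma\overline{X^{*}}^{\boldsymbol{i}})(\overline{X}^{\boldsymbol{i}}Y^{*})$, and since $\overline{X^{*}}^{\boldsymbol{i}}=X^{*\boldsymbol{i}}$ by Lemma~\ref{haoduo}(1), your $S$ and $U$ are exactly the paper's; your checks of $\boldsymbol{i}$-Hermiticity and unitarity fill in what the paper leaves as ``note that''.

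One caution applies only to your side remark. $S$ is $\boldsymbol{i}$-Hermitian exactly when $\boldsymbol{i}S$ is skew-Hermitian, so translating an ordinary polar decomposition $\boldsymbol{i}A=PV$ gives $A=(-\boldsymbol{i}P)V$ with $-\boldsymbol{i}P$ skew $\boldsymbol{i}$-Hermitian, not $\boldsymbol{i}$-Hermitian. Lemma~\ref{RIST}(3) concerns equivalence relations rather than factorizations and does not repair this, so that cross-check would not work as stated.
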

\begin{proof}
Assume $A=X\Sigma Y^{*}$ is the singular decomposition of $A$, we rewrite it as
 $$
 A=(X\Sigma\overline{X^{*}}^{\boldsymbol{i}})(\overline{X}^{\boldsymbol{i}}Y^{*}),
 $$
 note that $S=X\Sigma\overline{X^{*}}^{\boldsymbol{i}}$ is an $\boldsymbol{i}$-Hermitian matrix and $U=\overline{X}^{\boldsymbol{i}}Y^{*}$ is unitary. We call $A=SU$ the $\boldsymbol{i}$-polar decomposition($\boldsymbol{i}$-HUPD) of $A$.
\end{proof}
\begin{remark}
If the matrix $A$ is complex, then the $\boldsymbol{i}$-polar decomposition of $A$ in Proposition \ref{12294} is exactly the classic polar decomposition.
\end{remark}
\begin{lemma}
Let $A\in M_{n}(\mathbb{H})$ and $A=SU$ be an arbitrary $\boldsymbol{i}$-polar decomposition of $A$. Then
\begin{equation}\label{eng?}
S\overline{S}^{\boldsymbol{i}}=AA^*.
\end{equation}
\end{lemma}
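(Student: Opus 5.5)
The identity should follow by direct computation from the two defining properties of an $\boldsymbol{i}$-polar decomposition $A=SU$. The first is that $S$ is $\boldsymbol{i}$-Hermitian, so by Definition \ref{10221} we have $\overline{S}^{\boldsymbol{i}}=S^{*}$. The second is that $U$ is unitary, so $UU^{*}=I_n$. I will compute $AA^{*}$ directly and then substitute the $\boldsymbol{i}$-Hermitian relation.

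First, the usual rule $(XY)^{*}=Y^{*}X^{*}$ holds for quaternion matrices. Hence $A^{*}=(SU)^{*}=U^{*}S^{*}$, and therefore
\[
AA^{*}=SUU^{*}S^{*}=SS^{*}.
\]
Next, I use $S=S^{*\boldsymbol{i}}$, or equivalently $\overline{S}^{\boldsymbol{i}}=S^{*}$, which is exactly the form given in Definition \ref{10221}. Replacing $S^{*}$ by $\overline{S}^{\boldsymbol{i}}$ gives
\[
AA^{*}=S\overline{S}^{\boldsymbol{i}},
\]
which is (\ref{eng?}).

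There is essentially no obstacle here. The only point to check is the orientation of the identity. The equivalence of $\overline{S}^{\boldsymbol{i}}=S^{*}$ with $S=S^{*\boldsymbol{i}}$ is part of Definition \ref{10221}. One can also see it directly by applying $\boldsymbol{i}$-conjugation to both sides and using Lemma \ref{haoduo}(1), which gives $\overline{\overline{S}^{\boldsymbol{i}}}^{\boldsymbol{i}}=S$ and $\overline{S^{*}}^{\boldsymbol{i}}=S^{*\boldsymbol{i}}$. Since the decomposition $A=SU$ was arbitrary, the identity holds for every $\boldsymbol{i}$-polar decomposition. This also shows that $S\overline{S}^{\boldsymbol{i}}$ is determined by $A$ alone.
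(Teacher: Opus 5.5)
Your computation $AA^{*}=SUU^{*}S^{*}=SS^{*}=S\overline{S}^{\boldsymbol{i}}$ is correct and complete. The paper states this lemma without proof, and your direct verification, using unitarity of $U$ and the $\boldsymbol{i}$-Hermitian condition $\overline{S}^{\boldsymbol{i}}=S^{*}$, is exactly the intended argument.
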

One of the applications of polar decomposition of complex matrices is that it can provide the necessary and sufficient conditions for a complex matrix to be normal \cite{GJSW87}. A similar conclusion can also be reached for quaternion matrices.
\begin{proposition}\label{yaya}
Let $A$ be a $n\times n$ matrix in $M_n(\mathbb{H})$ and $A=PU$ is the polar decomposition of $A$ where $P$ is a positive semidefinite Hermitian matrix and $U$ is a unitary matrix. Then the following are equivalent.

\begin{enumerate}
    \item $A$ is a normal matrix,
    \item $AP=PA$,
    \item $AU=UA$,
    \item $PU=UP$.
\end{enumerate}
\end{proposition}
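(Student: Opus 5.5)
The plan is to mimic the classical complex argument, working throughout with the quaternionic spectral theorem for Hermitian/normal matrices (Corollary of \cite{Z}) and the uniqueness of the positive semidefinite square root. The preliminary fact is this: if $H$ is a positive semidefinite Hermitian quaternion matrix, then $P=\sqrt{H}$ is a real polynomial in $H$. Indeed, diagonalize $H=W\Lambda W^{*}$ with $\Lambda$ real nonnegative, and take a real polynomial $p$ with $p(\lambda_s)=\sqrt{\lambda_s}$. Since real scalars are central in $\mathbb{H}$, we get $p(H)=Wp(\Lambda)W^*=P$. Consequently, any matrix commuting with $H$ commutes with $P$.

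Since $A=PU$, we have $AA^{*}=P^2$ and $A^{*}A=U^{*}P^{2}U$. The implications then run as follows.

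\textbf{(1)$\Rightarrow$(4).} If $A$ is normal, then $P^{2}=U^{*}P^{2}U$, so $UP^{2}=P^{2}U$. Hence $U$ commutes with $P^2$, and therefore with $P=p(P^2)$.

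\textbf{(4)$\Rightarrow$(2),(3).} If $PU=UP$, then $AP=PUP=P^2U=PA$ and $AU=PU^2=UPU=UA$.

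\textbf{(2)$\Rightarrow$(1).} From $PA=AP$, i.e.\ $P^2U=PUP$, I would take conjugate transposes to get $U^{*}P^{2}=PU^{*}P$. I do not want to cancel $P$, since it may be singular. Instead, $AP=PA$ gives $A^{*}P=PA^{*}$, so $A^{*}$ commutes with $P^2=AA^{*}$. Then
\[
A^{*}A\,A^{*}=A^{*}AA^{*}, \qquad A^*(AA^*)=(AA^*)A^*,
\]
and I would exploit this as follows. We have $A^{*}A=U^{*}P^{2}U$ and $P^{2}U=PUP$, so $A^{*}A=U^{*}PUP=A^{*}P$. Applying the same identity to $AP=PA$ gives $A^{*}P=PA^{*}$, and $PA^{*}=PU^{*}P$. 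Since $P$ commutes with $A$, it commutes with $UP$, i.e.\ $PUP=UP^2$, which gives $PU=UP$ on $\mathrm{ran}(P)$. A cleaner route is to write $\mathbb{H}^n=\ker P\oplus \mathrm{ran} P$ (an orthogonal decomposition, $P$ being Hermitian) and block-decompose. On $\mathrm{ran}P$, $P$ is invertible, so $PA=AP$ yields $PU=UP$ there. On $\ker P=\ker A^*$, we have $A=0$ on $\ker P$ because $AP=PA$. So $AA^{*}=P^2$ and $A^{*}A=U^*P^2U$ agree, and $A$ is normal. This singular case is the main obstacle. If the block argument gets messy, I would instead use the fact that $A$ normal is equivalent to $\|A\boldsymbol{x}\|=\|A^*\boldsymbol{x}\|$ for all $\boldsymbol{x}$, and check the latter using $\ker P=\ker A^{*}$ together with the invariance of $\ker P$ and $\mathrm{ran}P$ under $A$.

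\textbf{(3)$\Rightarrow$(1).} If $UA=AU$, then $UPU=PU^{2}$, so $UP=PU$ since $U$ is invertible. This gives (4), and then $A^{*}A=U^{*}P^{2}U=P^{2}=AA^{*}$.

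Thus the cycle closes as (1)$\Rightarrow$(4)$\Rightarrow$(2),(3), (3)$\Rightarrow$(4)$\Rightarrow$(1), and (2)$\Rightarrow$(1).
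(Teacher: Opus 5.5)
The paper states this proposition without proof; it is offered only as the quaternionic analogue of the complex result in \cite{GJSW87}. Your argument therefore has to stand on its own, and most of it does. The implication (1)$\Rightarrow$(4) via $P=p(P^{2})$ with $p$ a real polynomial is valid over $\mathbb{H}$ precisely because real scalars are central. It is slightly cleaner to diagonalize $P=W\Sigma W^{*}$ itself and interpolate $p(\sigma_s^{2})=\sigma_s$, so that no uniqueness of square roots is needed. The implications (4)$\Rightarrow$(2),(3), (3)$\Rightarrow$(4) and (4)$\Rightarrow$(1) are correct.

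The gap is in (2)$\Rightarrow$(1), the only nontrivial implication.

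\textbf{False steps in the first attempt.} The second equality in $A^{*}A=U^{*}PUP=A^{*}P$ is wrong, because in fact $U^{*}PUP=A^{*}UP$. For $P=I$ and $U\neq I$ the hypothesis $PA=AP$ holds, yet $A^{*}A=I\neq U^{*}=A^{*}P$. Also, ``$P$ commutes with $A$, hence with $UP$'' does not follow, since $A=PU$ and not $UP$.

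\textbf{Missing step in the block route.} The block route is the right idea, but you never show that $U$ respects the decomposition $\mathbb{H}^{n}=\mathrm{ran}P\oplus\ker P$. Read literally, ``$PU=UP$ on $\mathrm{ran}P$'' needs this: you may cancel $P$ in $P(PU\boldsymbol{x}-UP\boldsymbol{x})=0$ only once you know $PU\boldsymbol{x}-UP\boldsymbol{x}\in\mathrm{ran}P$. In any case, your final claim $U^{*}P^{2}U=P^{2}$ needs the compressed block $U_{11}$ to be unitary. Likewise, ``$A=0$ on $\ker P$'' uses $\mathrm{ran}A\subseteq\mathrm{ran}P=(\ker P)^{\perp}$ in addition to $AP=PA$.

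\textbf{The fix.} Write $P=P_1\oplus 0$ with $P_1$ invertible, and partition $U=(U_{rs})$ conformally.
\begin{itemize}
\item The $(1,2)$ block of $P^{2}U=PUP$ gives $P_1^{2}U_{12}=0$, so $U_{12}=0$.
\item Then $UU^{*}=I$ yields $U_{11}U_{11}^{*}=I$. Hence $U_{11}$ is unitary, and $U^{*}U=I$ forces $U_{21}=0$.
\item The $(1,1)$ block gives $P_1^{2}U_{11}=P_1U_{11}P_1$, hence $P_1U_{11}=U_{11}P_1$.
\end{itemize}
So $PU=UP$, i.e.\ (2)$\Rightarrow$(4), and your (4)$\Rightarrow$(1) finishes the cycle.

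A block-free alternative also works. Put $X=PU-UP$. The relations $P^{2}U=PUP$ and $U^{*}P^{2}=PU^{*}P$ give $X^{*}X=P^{2}-U^{*}P^{2}U$. Its real trace is $0$, because $\mathrm{Re}\,\mathrm{tr}(YZ)=\mathrm{Re}\,\mathrm{tr}(ZY)$ for quaternion matrices. Since $\mathrm{tr}(X^{*}X)=\|X\|_F^{2}$, it follows that $X=0$.
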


$\boldsymbol{i}$-conjugate normal matrices play the same role in the theory of unitary $\boldsymbol{i}$-congruence as normal matrices and conjugate-normal matrices do with respect to unitary equivalence in $M_{n}(\mathbb{H})$ and unitary congruence in $M_{n}(\mathbb{C})$.
In this section, we prove the following theorem. First we introduce a definition that will be used in the proof Theorem \ref{12293}.
\begin{definition}
Let $A, B\in M_{n}(\mathbb{H})$, if $A\overline{B}^{\boldsymbol{i}}=B\overline{A}^{\boldsymbol{i}}$, then we say $A$ is $\boldsymbol{i}$-commute with $B$.
\end{definition}
We know that if two quaternion matrices $A$ and $B$ is commutative, that is, $AB=BA$, then the matrices $A_{1}=UAU^{*}$ and $B_{1}=UBU^{*}$ are still commutative for any unitary matrix $U$. Analogously, the $\boldsymbol{i}$-commutation between two quaternion matrices has the similar property.
\begin{lemma}
The $\boldsymbol{i}$-commutation is preserved by unitary $\boldsymbol{i}$-congruence.
\end{lemma}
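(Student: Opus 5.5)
The claim is that if $A\overline{B}^{\boldsymbol{i}}=B\overline{A}^{\boldsymbol{i}}$ and $U$ is unitary, then $A_1=UAU^{*\boldsymbol{i}}$ and $B_1=UBU^{*\boldsymbol{i}}$ again satisfy $A_1\overline{B_1}^{\boldsymbol{i}}=B_1\overline{A_1}^{\boldsymbol{i}}$. This is a direct computation with the algebraic rules of Lemma \ref{haoduo}, the same way Theorem \ref{3.3} computed $A\overline{A}^{\boldsymbol{i}}=U\Delta\overline{\Delta}^{\boldsymbol{i}}U^*$.

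First compute $\overline{B_1}^{\boldsymbol{i}}$. By the multiplicativity in Lemma \ref{haoduo}(6) we get $\overline{B_1}^{\boldsymbol{i}}=\overline{U}^{\boldsymbol{i}}\,\overline{B}^{\boldsymbol{i}}\,\overline{U^{*\boldsymbol{i}}}^{\boldsymbol{i}}$. By Lemma \ref{haoduo}(7), $\overline{U^{*\boldsymbol{i}}}^{\boldsymbol{i}}=U^*$, so
\[
\overline{B_1}^{\boldsymbol{i}}=\overline{U}^{\boldsymbol{i}}\,\overline{B}^{\boldsymbol{i}}\,U^*.
\]

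Next, note that $U^{*\boldsymbol{i}}=(\overline{U}^{\boldsymbol{i}})^*$ by Lemma \ref{haoduo}(1). Since $\overline{U}^{\boldsymbol{i}}=(-\boldsymbol{i}I)U(\boldsymbol{i}I)$ is a product of unitaries, it is unitary, and hence $U^{*\boldsymbol{i}}\,\overline{U}^{\boldsymbol{i}}=I_n$. This is the only point needing care: we must make sure the $\boldsymbol{i}$-conjugate of a unitary matrix is unitary, which the above factorization settles.

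Multiplying out, the middle factors cancel:
\[
A_1\overline{B_1}^{\boldsymbol{i}}=UAU^{*\boldsymbol{i}}\,\overline{U}^{\boldsymbol{i}}\,\overline{B}^{\boldsymbol{i}}U^*=U\bigl(A\overline{B}^{\boldsymbol{i}}\bigr)U^*.
\]
In the same way, $B_1\overline{A_1}^{\boldsymbol{i}}=U\bigl(B\overline{A}^{\boldsymbol{i}}\bigr)U^*$.

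Finally, the hypothesis $A\overline{B}^{\boldsymbol{i}}=B\overline{A}^{\boldsymbol{i}}$ makes the two right-hand sides equal, so $A_1$ $\boldsymbol{i}$-commutes with $B_1$. For the converse direction, apply the same argument with the unitary $U^*$, using $U^*A_1(U^*)^{*\boldsymbol{i}}=A$. Thus $\boldsymbol{i}$-commutation is preserved in both directions, which is consistent with unitary $\boldsymbol{i}$-congruence being an equivalence relation.
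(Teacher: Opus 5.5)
Your proof is correct and takes essentially the paper's approach. The paper states this lemma without proof, by analogy with the fact that $AB=BA$ implies $(UAU^*)(UBU^*)=(UBU^*)(UAU^*)$; your computation $A_1\overline{B_1}^{\boldsymbol{i}}=U(A\overline{B}^{\boldsymbol{i}})U^*$ and $B_1\overline{A_1}^{\boldsymbol{i}}=U(B\overline{A}^{\boldsymbol{i}})U^*$ is exactly the routine verification it omits, resting on multiplicativity of the $\boldsymbol{i}$-conjugate and on $U^{*\boldsymbol{i}}\overline{U}^{\boldsymbol{i}}=I_n$.
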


We shall call a factorization
\[
    A=SU
\]
a left $\boldsymbol{i}$-Hermitian-unitary polar decomposition
(left $\boldsymbol{i}$-HUPD) of \(A\) if \(S\) is $\boldsymbol{i}$-Hermitian,
that is,
\[
    S=S^{*\boldsymbol{i}},
\]
and \(U\) is unitary.

The following is the main theorem of this part.
\begin{theorem}\label{12293}
Let \(A\in M_n(\mathbb H)\). Then the following assertions are
equivalent. 
\begin{enumerate}
\item [(1)] $A$ is $\boldsymbol{i}$-conjugate normal;
\item [(2)] There exists a left $\boldsymbol{i}$-HUPD 
$$
A=SU
$$
 such that
\begin{equation}\label{zh}
A\overline{S}^{\boldsymbol{i}}=S\overline{A}^{\boldsymbol{i}};
\end{equation}
\item [(3)] There exists a left $\boldsymbol{i}$-HUPD 
$$
A=SU
$$
 such that
\begin{equation}\label{zhh}
U(\overline{S}^{\boldsymbol{i}}S)=(\overline{S}^{\boldsymbol{i}}S)U;
\end{equation}
\item [(4)]There exists a left $\boldsymbol{i}$-HUPD 
$$
A=SU
$$
 such that
\begin{equation}\label{zhi}
U(A^{*}A)=(A^{*}A)U.
\end{equation}
\end{enumerate}
\end{theorem}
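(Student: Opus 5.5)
The plan is to reduce everything to the classical polar-decomposition characterization of normality (Proposition \ref{yaya}) applied to the matrix $\boldsymbol{i}A$. Two facts make this work. First, $A$ is $\boldsymbol{i}$-conjugate normal if and only if $\boldsymbol{i}A$ is normal; this was already used in the proof of Theorem \ref{RUISIN}. Second, $S$ is $\boldsymbol{i}$-Hermitian if and only if $H:=\boldsymbol{i}S$ is Hermitian. Indeed, $\overline{S}^{\boldsymbol{i}}=S^{*}$ means $S^{*}=-\boldsymbol{i}S\boldsymbol{i}$, so
\[
(\boldsymbol{i}S)^{*}=S^{*}(-\boldsymbol{i})=(-\boldsymbol{i}S\boldsymbol{i})(-\boldsymbol{i})=\boldsymbol{i}S .
\]
Hence left $\boldsymbol{i}$-HUPDs $A=SU$ correspond bijectively to factorizations $\boldsymbol{i}A=HU$ with $H$ Hermitian and $U$ unitary, via $H=\boldsymbol{i}S$ and $S=-\boldsymbol{i}H$. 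These $H$ need not be positive semidefinite.

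Next I translate the three conditions, taking care that a scalar $\boldsymbol{i}$ multiplies entries on one side and does not commute with matrices.
\begin{itemize}
\item For (\ref{zh}): right-multiplying $A(-\boldsymbol{i}S\boldsymbol{i})=S(-\boldsymbol{i}A\boldsymbol{i})$ by $-\boldsymbol{i}$ gives $A(\boldsymbol{i}S)=S(\boldsymbol{i}A)$. Left-multiplying by $\boldsymbol{i}$ then gives $(\boldsymbol{i}A)H=H(\boldsymbol{i}A)$.
\item For (\ref{zhh}): $\overline{S}^{\boldsymbol{i}}S=-\boldsymbol{i}S\boldsymbol{i}S=-H^{2}$, so (\ref{zhh}) says $UH^{2}=H^{2}U$.
\item For (\ref{zhi}): $A^{*}A=(\boldsymbol{i}A)^{*}(\boldsymbol{i}A)$, so (\ref{zhi}) says $U$ commutes with $(\boldsymbol{i}A)^{*}(\boldsymbol{i}A)$.
\end{itemize}
Thus the theorem becomes: $\boldsymbol{i}A$ is normal if and only if some factorization $\boldsymbol{i}A=HU$ ($H$ Hermitian, $U$ unitary) satisfies $[\boldsymbol{i}A,H]=0$, respectively $[U,H^{2}]=0$, respectively $[U,(\boldsymbol{i}A)^{*}(\boldsymbol{i}A)]=0$.

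For (1)$\Rightarrow$(2),(3),(4), I take the genuine polar decomposition $\boldsymbol{i}A=PU$ with $P\ge 0$ and put $S=-\boldsymbol{i}P$. Proposition \ref{yaya} then gives $AP=PA$ and $PU=UP$ for the normal matrix $\boldsymbol{i}A$ (in place of $A$). These yield respectively $[\boldsymbol{i}A,P]=0$, $[U,P^{2}]=0$, and $[U,(\boldsymbol{i}A)^{*}(\boldsymbol{i}A)]=[U,U^{*}P^{2}U]=0$.

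For the converses, write $N=\boldsymbol{i}A=HU$, so $NN^{*}=H^{2}$ and $N^{*}N=U^{*}H^{2}U$. From (3), $U^{*}H^{2}U=H^{2}$, so $N^{*}N=NN^{*}$. From (4), $U$ commutes with $U^{*}H^{2}U$, hence with $H^{2}$, and we are back in case (3). The main obstacle is (2)$\Rightarrow$(1), because $H$ is not positive semidefinite and Proposition \ref{yaya} cannot be quoted directly. From $NH=HN$ we get $HUH=H^{2}U$, and I would first try to show $UH^{2}=H^{2}U$ from this. Since $HUH$ is Hermitian, taking adjoints gives $HU^{*}H=U^{*}H^{2}$. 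Diagonalize $H$ unitarily with real diagonal $D$ (normal quaternion case of Corollary \ref{UIF}'s preceding corollary), and write $U$ in blocks according to the distinct eigenvalues $d_{k}$ of $H$. Then $HUH=H^{2}U$ becomes $d_{k}d_{l}U_{kl}=d_{k}^{2}U_{kl}$, which forces $U_{kl}=0$ whenever $d_{k}\neq 0$ and $d_{l}\neq d_{k}$. The adjoint relation kills the symmetric blocks, so the only blocks that can survive are those with $d_{k}=d_{l}$ or $d_{k}=d_{l}=0$. Therefore $U$ commutes with $D^{2}$, i.e. with $H^{2}$, which is (3).

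If this block bookkeeping turns out to be delicate over $\mathbb{H}$, the fallback is to pass to complex representations $(\cdot)_{\mathbb{C}}$, which preserve products, adjoints and Hermiticity, and run the same argument there.
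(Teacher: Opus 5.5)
\textbf{There is a genuine gap: your bridge between $\boldsymbol{i}$-Hermitian and Hermitian matrices has the wrong sign.} If $S^{*}=\overline{S}^{\boldsymbol{i}}=-\boldsymbol{i}S\boldsymbol{i}$, then
\[
(\boldsymbol{i}S)^{*}=S^{*}(-\boldsymbol{i})=(-\boldsymbol{i}S\boldsymbol{i})(-\boldsymbol{i})=\boldsymbol{i}S\boldsymbol{i}^{2}=-\boldsymbol{i}S .
\]
So $S$ is $\boldsymbol{i}$-Hermitian if and only if $\boldsymbol{i}S$ is \emph{skew}-Hermitian; the simplest instance is $S=I_n$. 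No other rescaling can give a correspondence with Hermitian matrices. The $\boldsymbol{i}$-Hermitian matrices form a real space of dimension $2n^{2}+n$, while the Hermitian ones form a space of dimension $2n^{2}-n$.

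This breaks your proof of (1)$\Rightarrow$(2),(3),(4). With $\boldsymbol{i}A=PU$ and $P\geq 0$, the matrix $S=-\boldsymbol{i}P$ satisfies $\overline{S}^{\boldsymbol{i}}=-P\boldsymbol{i}=-S^{*}$. It is therefore skew $\boldsymbol{i}$-Hermitian, and $A=SU$ is not a left $\boldsymbol{i}$-HUPD. Already for $n=1$, $A=1$, your recipe gives $S=-\boldsymbol{i}$, whereas a $1\times 1$ $\boldsymbol{i}$-Hermitian matrix must have zero $\boldsymbol{i}$-component. The natural patch $H=\boldsymbol{i}P$ also fails, because $(\boldsymbol{i}P)^{*}=-P\boldsymbol{i}\neq-\boldsymbol{i}P$ unless $P$ is complex. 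Proposition \ref{yaya} only supplies a positive semidefinite Hermitian factor, so it cannot be quoted here.

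\textbf{What the forward direction actually needs.} You need a factorization $N:=\boldsymbol{i}A=HU$ with $H$ skew-Hermitian, commuting with $N$ and with $U$. For normal $N$ you can build one as follows:
\begin{itemize}
\item diagonalize $N=WDW^{*}$ with $D$ complex diagonal;
\item write $D=|D|\Phi$ with $\Phi$ a complex diagonal unitary;
\item set $H=W(\boldsymbol{i}|D|)W^{*}$, $U=W(-\boldsymbol{i}\Phi)W^{*}$ and $S=-\boldsymbol{i}H$.
\end{itemize}
This is the paper's argument in disguise. The paper diagonalizes $A$ by unitary $\boldsymbol{i}$-congruence (Theorem \ref{RUISIN}) and takes the diagonal polar decomposition $\widetilde{P}\widetilde{U}$ with real $\widetilde{P}\geq 0$. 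It then transports back by $X\mapsto FX\overline{F^{*}}^{\boldsymbol{i}}$, which preserves $\boldsymbol{i}$-Hermiticity.

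\textbf{The same error affects (2)$\Rightarrow$(1).} $H$ has no real diagonal form; it is unitarily similar to $\boldsymbol{i}T$ with $T$ real diagonal. With $U'=W^{*}UW$, the relation $HUH=H^{2}U$ becomes
\[
t_kt_l\,\overline{U'_{kl}}^{\boldsymbol{i}}=t_k^{2}U'_{kl}.
\]
Your scalar cancellation must therefore be replaced by a Frobenius-norm comparison as in the paper, grouping blocks by $|t_k|$. Alternatively, $[N,H]=0$ gives $[N,H^{2}]=0$, and $H^{2}=-NN^{*}$, so $N$ commutes with $NN^{*}$. Then $\mathrm{Re}\,\mathrm{tr}\bigl((N^{*}N-NN^{*})^{2}\bigr)=0$ forces normality.

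\textbf{What survives.} Your steps (3)$\Rightarrow$(1) and (4)$\Rightarrow$(3) are fine, since $NN^{*}=-H^{2}$ and $N^{*}N=-U^{*}H^{2}U$ carry the same sign. The complex-representation fallback would not have caught the error, because it inherits the same misidentification.
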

\begin{proof}

Necessity. Assume $A$ is $\boldsymbol{i}$-conjugate normal and let $F$ be the unitary matrix that transforms $A$ into the complex diagonal matrix $\widetilde{A}$ by unitary $\boldsymbol{i}$-congruence: $\widetilde{A}=F^{*}A\overline{F}^{\boldsymbol{i}}$. Consider the standard polar decomposition
\[
    \widetilde A=\widetilde P\widetilde U,
    \qquad
    \widetilde P
      =(\widetilde A\widetilde A^*)^{1/2}\geq0.
\]
Since \(\widetilde A\) is diagonal, both
\(\widetilde P\) and \(\widetilde U\) may be chosen diagonal.
In particular, \(\widetilde P\) is a nonnegative real
diagonal matrix.

Since $\widetilde{A}$ is normal, by Proposition \ref{yaya} we have
$\widetilde{A}\widetilde{P}=\widetilde{P}\widetilde{A}$ and $\widetilde{U}\widetilde{P}=\widetilde{P}\widetilde{U}$, which implies
$\widetilde{U}\widetilde{P}^2=\widetilde{P}^{2}\widetilde{U}.$ Now we reverse the transformation $\widetilde{A}=F^{*}A\overline{F}^{\boldsymbol{i}}$ in order to return to the original matrix $A$. Setting
\begin{equation}\label{shui}
S=F\widetilde{P}\overline{F^*}^{\boldsymbol{i}},\ \ U=\overline{F}^{\boldsymbol{i}}\widetilde{U}\overline{F^*}^{\boldsymbol{i}},
\end{equation}
we obtain an $\boldsymbol{i}$-$HUPD$ of $A$: $A=SU$. Since $\overline{S}^{\boldsymbol{i}}S=\overline{F}^{\boldsymbol{i}}\widetilde{P}^{2}\overline{F^*}^{\boldsymbol{i}}$, we deduce relation (\ref{zhh}) from $\widetilde{U}\widetilde{P}^2=\widetilde{P}^{2}\widetilde{U}$ and the second formula in (\ref{shui}). Next we observe that, by (\ref{eng?}) and the $\boldsymbol{i}$-conjugate normality of $A$, the equality
$$
\overline{S}^{\boldsymbol{i}}S=\overline{S\overline{S}^{\boldsymbol{i}}}^{\boldsymbol{i}}=\overline{AA^*}^{\boldsymbol{i}}=A^{*}A
$$
holds, which says that (\ref{zhi}) is the same relation as (\ref{zhh}).

For the complex matrix $\widetilde{A}$ and the real matrix $\widetilde{P}$ in $\widetilde{A}\widetilde{P}=\widetilde{P}\widetilde{A}$, commutativity and $\boldsymbol{i}$-commutativity are the same thing. However, $\boldsymbol{i}$-commutativity is preserved by unitary $\boldsymbol{i}$-congruence, which means that $A$ and $S$ must obey relation
(\ref{zh}).

Sufficiency. First, we suppose that condition $(2)$ is satisfied. Let $F$ be a unitary matrix that brings $S$ to the nonnegative real diagonal matrix $\Lambda$, that is, $\Lambda=F^{*}S\overline{F}^{\boldsymbol{i}}.$
Without loss of generality, we may regard $\Lambda$ as a special block diagonal matrix of the form
\begin{equation}\label{32}
\Lambda=\lambda_{1}I_{k_1}\oplus\lambda_{2}I_{k_2}\oplus\cdots\oplus\lambda_{m}I_{k_m},
\end{equation}

where
\[
 \lambda_1>\cdots >\lambda_m\geq 0
    \quad\text{and}\quad
    \lambda_r\neq\lambda_s\ \text{for }r\neq s.
\]
Set
\[
    \widetilde A=F^*A\overline{F}^{\boldsymbol{i}}.
\]
No triangularity of \(\widetilde A\) is assumed.

Under the same transformation, the relation
\[
    A\overline{S}^{\boldsymbol{i}}=S\overline{A}^{\boldsymbol{i}}
\]
becomes
\[
    \widetilde A\Lambda
    =\Lambda\overline{\widetilde A}^{\boldsymbol{i}}.
\]
Write
\[
    \widetilde A=(\widetilde A_{rs})_{r,s=1}^m
\]
according to the block decomposition of \(\Lambda\). Then
\[
    \widetilde A_{rs}\lambda_s
    =\lambda_r\overline{\widetilde A_{rs}}^{\boldsymbol{i}}.
\]
Taking Frobenius norms and using
\[
    \|\overline{\widetilde A_{rs}}^{\boldsymbol{i}}\|_F
    =\|\widetilde A_{rs}\|_F,
\]
we obtain
\[
    \lambda_s\|\widetilde A_{rs}\|_F
    =\lambda_r\|\widetilde A_{rs}\|_F.
\]
Since \(\lambda_r\neq\lambda_s\) whenever \(r\neq s\), it follows
that
\[
    \widetilde A_{rs}=0
    \qquad (r\neq s).
\]
Therefore,
\begin{equation}\label{34}
\widetilde{A}=\widetilde{A}_{11}\oplus \widetilde{A}_{22}\oplus\cdots\oplus\widetilde{A}_{mm}.
\end{equation}

Setting $\widetilde{U}=\overline{F^*}^{\boldsymbol{i}}U\overline{F}^{\boldsymbol{i}},$ we have
\begin{equation}\label{36}
\widetilde{A}=\Lambda\widetilde{U}.
\end{equation}

If $\lambda_m>0$(i.e., A is invertible), then (\ref{36}), combined with (\ref{32}) and (\ref{34}), implies that $\widetilde{U}$ is a block diagonal matrix of the same type as (\ref{34}) and
\begin{equation}\label{37}
\widetilde{A}_{ii}=\lambda_{i}\widetilde{U}_{ii},\ \ \ i=1,2,\cdots,m.
\end{equation}

Note that a scalar multiple of a unitary matrix is both a normal and an $\boldsymbol{i}$-conjugate normal matrix. Thus, being a direct sum of the $\boldsymbol{i}$-conjugate normal blocks $\widetilde{A}_{ii}$, the matrix $\widetilde{A}$ itself is $\boldsymbol{i}$-conjugate normal, hence A is $\boldsymbol{i}$-conjugate normal too.

If $\lambda_m=0$. We deduce from (\ref{36}) that
$\widetilde{U}_{ij}=0$ for $i=1,2,\cdots,m-1$ and $j\neq i$. Since $\widetilde{U}$ is unitary, this implies that $\widetilde{U}_{mj}=0$ for $j=1,2,\cdots,m-1$. The rest of the argument is the same as above, the only distinction being that $\widetilde{A}_{mm}=\lambda_{m}U_{mm}=0$. Then the matrix $\widetilde{A}$ itself is $\boldsymbol{i}$-conjugate normal, hence A is $\boldsymbol{i}$-conjugate normal too.

Now assume $(3)$ is satisfied. Now we apply to $A, S$ and $U$ transformations $\widetilde{A}=F^{*}A\overline{F}^{\boldsymbol{i}}$, $\Lambda=F^{*}S\overline{F}^{\boldsymbol{i}}$ and $\widetilde{U}=\overline{F^*}^{\boldsymbol{i}}U\overline{F}^{\boldsymbol{i}}$, respectively. Since
$$
\Lambda^2=\overline{\Lambda}^{\boldsymbol{i}}\Lambda=(\overline{F^*}^{\boldsymbol{i}}\overline{S}^{\boldsymbol{i}}S\overline{F}^{\boldsymbol{i}}),
$$
relation (\ref{zhh}) transforms into $\widetilde{U}\Lambda^2=\Lambda^{2}\widetilde{U}$.

It follows that $\widetilde{U}$ is block diagonal: $\widetilde{U}=\widetilde{U}_{11}\oplus \widetilde{U}_{22}\oplus\cdots\oplus\widetilde{U}_{mm}$. The rest of the proof is as in case $(1)$.

Finally, suppose that $(4)$ is true. Substituting $A=SU$ into (\ref{zhi}), we have
$$
\overline{S}^{\boldsymbol{i}}S=U^{*}\overline{S}^{\boldsymbol{i}}SU.
$$
Thus, (\ref{zhi}) is the same relation as (\ref{zhh}), which completes the proof.
\end{proof}

\section*{Statements and Declarations}

\noindent \textbf{Ethics approval}

\noindent Not applicable.

\noindent \textbf{Competing interests}

\noindent The authors declare that there is no conflict of interest or competing interest.

\noindent \textbf{Authors' contributions}

\noindent All authors contributed equally to this work.

\noindent \textbf{Availability of data and materials}

\noindent Data sharing is not applicable to this article as no data sets were generated or analyzed during the current study.


\begin{thebibliography}{000}
\bibitem{AI2003}
Yu. A$\text{l}^\prime$pin and Kh. Ikramov. On the unitary similarity of matrix families. Math. Notes. 74(6):772--782, 2003.

\bibitem{B}
J.~Brenner. Matrices of quaternions. Pac. J. Math. 1(3):329--335, 1951.

\bibitem{DS2008}
D.~Dokovi\'{c} and B.~Smith. Quaternionic matrices: Unitary similarity, simultaneous
  triangularization and some trace identities. Linear Algebra Appl. 428(4):890--910, 2008.

\bibitem{GJSW87}
R.~Grone, C.~Johnson, E.~Sa, and H.~Wolkowicz. Normal matrices. Linear Algebra Appl. 87:213--225, 1987.

\bibitem{HLM1974}
F.~Herbut, P.~Loncke, and M.~Vuji\v{c}i\'{c}. Canonical form for matrices under unitary congruence transformations.
  {II}. {C}ongruence-normal matrices. SIAM J. Appl. Math. 26:794--805, 1974.

\bibitem{HR}
R.~Horn and C.~Johnson. Matrix analysis. Cambridge University Press. 1985.

\bibitem{HZ2012}
R.~Horn and F.~Zhang. A generalization of the complex autonne-takagi factorization to
  quaternion matrices. Linear Multilinear A. 60:1239--1244, 2012.

\bibitem{H}
L.~Huang. Consimilarity of quaternion matrices and complex matrices. Linear Algebra Appl. 331(1-3):21--30, 2001.

\bibitem{I}
Kh. Ikramov. On complex matrices that are unitarily similar to real matrices. Math. Notes. 87(5-6):821--827, 2010.

\bibitem{JiangCL}
T.~Jiang, X.~Cheng, and S.~Ling. An algebraic relation between consimilarity and similarity of
  quaternion matrices and applications. J. Appl. Math. 795203, 5 pp, 2014.

\bibitem{RE1944}
R.~Johnson. On the equation $\chi \alpha = \gamma \chi + \beta$ over an algebraic division ring. B. Am. Math. Soc. 50:202--207, 1944.

\bibitem{WS1940}
W.~Specht. Zur theorie der matrizen. {I}{I}. Jahresber. Deutsch. Math-Verein. 50:19--23, 1940.

\bibitem{VHV1972}
M.~Vuji\v{c}i\'{c}, F.~Herbut, and G.~Vuji\v{c}i\'{c}. Canonical form for matrices under unitary congruence transformations.
  {I}. {C}onjugate-normal matrices. SIAM J. Appl. Math. 23:225--238, 1972.

\bibitem{W}
L.~Wolf. Similarity of matrices in which the elements are real quaternions. B. Am. Math. Soc. 42:737--743, 1936.

\bibitem{Z}
F.~Zhang. Quaternions and matrices of quaternions. Linear Algebra Appl. 251:21--57, 1997.

\bibitem{Z07}
F.~Zhang.  Ger$\check{s}$gorin type theorems for quaternionic matrices. Linear Algebra Appl. 424(1):139--153, 2007.
\end{thebibliography}
\end{document}